\documentclass[11pt, oneside]{amsart}
\usepackage[text={5.58in,8.5in},centering,letterpaper,dvips]{geometry}
\usepackage[dvipsnames]{xcolor}
\usepackage{graphicx}
\usepackage[utf8]{inputenc}
\usepackage{subcaption}
\usepackage{amsfonts}
\usepackage{epsf}
\usepackage{amssymb}
\usepackage{amsmath}
\usepackage{amscd}
\usepackage{tikz}
\usepackage{pdfpages}
\usepackage{fancyhdr}
\usepackage{setspace}
\usepackage{hyperref}
\usepackage[all]{xy}
\usetikzlibrary{matrix}
\usepackage{verbatim}
\usepackage{enumerate}


\theoremstyle{theorem}
\newtheorem{theorem}{Theorem}[section]
\newtheorem{proposition}[theorem]{Proposition}
\newtheorem{lemma}[theorem]{Lemma}
\newtheorem{question}[theorem]{Question}

\newtheorem{conjecture}[theorem]{Conjecture}

\theoremstyle{definition}

\newtheorem{remark}[theorem]{Remark}

\newcommand{\Z}{\mathbb{Z}}
\newcommand{\C}{\mathbb{C}}

\newcommand{\R}{\mathbb{R}}

\newcommand{\Tt}{\mathcal T}

\newcommand{\wh}[1]{\widehat{#1}}

\newcommand{\A}{\alpha}
\newcommand{\n}{\beta}

\newcommand{\pd}{\partial}
\newcommand{\lk}{\ell k}
\newcommand{\emp}{\emptyset}
\newcommand{\X}{\times}

\newcommand{\be}{\begin{enumerate}}
\newcommand{\ee}{\end{enumerate}}
\newcommand{\eps}{\varepsilon}
\newcommand{\Ph}{\tilde p}


\makeatletter
\def\@seccntformat#1{%
  \protect\textup{\protect\@secnumfont
    \ifnum\pdfstrcmp{subsection}{#1}=0 \bfseries\fi
    \csname the#1\endcsname
    \protect\@secnumpunct
  }%
}  
\makeatother


\makeatletter
\newtheorem*{rep@theorem}{\rep@title}
\newcommand{\newreptheorem}[2]{%
\newenvironment{rep#1}[1]{%
 \def\rep@title{#2~\ref{##1}}%
 \begin{rep@theorem}}%
 {\end{rep@theorem}}}
\makeatother

\newreptheorem{theorem}{Theorem}
\newreptheorem{lemma}{Lemma}
\newreptheorem{question}{Question}
\newreptheorem{corollary}{Corollary}
\newreptheorem{proposition}{Proposition}


\topmargin = -.25in 
\textwidth = 6in
\textheight = 8.75in
\oddsidemargin = .25in
\evensidemargin = 0in
\begin{document}

\rhead{\thepage}
\lhead{\author}
\thispagestyle{empty}


\raggedbottom
\pagenumbering{arabic}
\setcounter{section}{0}


\title{Equivalent characterizations of handle-ribbon knots}

\author{Maggie Miller}
\address{Department of Mathematics, Princeton University, Princeton, NJ 08540}
\email{maggiem@math.princeton.edu}
\urladdr{https://web.math.princeton.edu/~maggiem}

\author{Alexander Zupan}
\address{Department of Mathematics, University of Nebraska-Lincoln, Lincoln, NE 68588}
\email{zupan@unl.edu}
\urladdr{http://www.math.unl.edu/~azupan2}

\begin{abstract}
The stable Kauffman conjecture posits that a knot in $S^3$ is slice if and only if it admits a slice derivative. We prove a related statement:  A knot is handle-ribbon (also called strongly homotopy-ribbon) in a homotopy 4-ball $B$ if and only if it admits an R-link derivative; i.e. an $n$-component derivative $L$ with the property that zero-framed surgery on $L$ yields $\#^n(S^1\times S^2)$.  We also show that $K$ bounds a handle-ribbon disk $D \subset B$ if and only if the 3-manifold obtained by zero-surgery on $K$ admits a singular fibration that extends over handlebodies in $B \setminus D$, generalizing a classical theorem of Casson and Gordon to the non-fibered case for handle-ribbon knots.
\end{abstract}

\maketitle

\section{Introduction}

One of the most well-known open problems in knot theory is the slice-ribbon conjecture of Fox, which proposes that every knot $K \subset S^3$ that bounds a smooth disk in $B^4$ also bounds an immersed ribbon disk in $S^3$~\cite{fox}.  In other words, if $K$ is slice, then $K$ is ribbon.  In this paper, we focus on characterizing sliceness, ribbonness, and an intermediate condition using \emph{derivative links}.  For a knot $K \subset S^3$ and genus $g$ Seifert surface $F$ for $S$, a \emph{derivative} $L$ for $K$ in $F$ is a $g$-component link such that $L \subset F$, $F - L$ is a connected planar surface, and $\lk(L_i,L^+_j)=0$ for all $i,j$, where $L^+_j$ is a parallel copy of $L_j$ pushed off of $F$.  

The following proposition is well-known; see~\cite{CD} for a proof.

\begin{proposition}\label{ribbon}
A knot $K \subset S^3$ is ribbon if and only if $K$ has an unlink derivative $U$.
\end{proposition}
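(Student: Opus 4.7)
The plan is to convert ribbon disks in $B^4$ and unlink derivatives in Seifert surfaces into one another via compatible surface surgeries between $S^3$ and $B^4$.

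For the forward direction, I would start from the standard immersed-in-$S^3$ model of a ribbon disk: $K$ is obtained by fusing an unlink $U_0 \cup \cdots \cup U_g$ (bounding disjoint disks $E_0, \ldots, E_g$) along $g$ disjoint bands $\beta_1, \ldots, \beta_g$, where each band may cut through some $E_j$ at a ribbon singularity. I construct a genus-$g$ Seifert surface $F$ for $K$ in $S^3$ by resolving each ribbon singularity with a tube: at each arc where $\beta_i$ passes through $E_j$, cut small disks out of both sheets and glue in a connecting annulus, producing an embedded surface with boundary $K$. Take $L_i$ to be the core circle of the $i$th tube; it is a meridian of the band $\beta_i$ at the original singularity, so it bounds the obvious cross-sectional disk of $\beta_i$ in $S^3$. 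These cross-sectional disks can be arranged disjointly, so $L = L_1 \cup \cdots \cup L_g$ is an unlink in $S^3$. The complement $F - L$ deformation retracts to $\bigcup_j E_j$ with trimmed bands attached, which is connected and planar, and the vanishing of each $\lk(L_i, L_j^+)$ can be checked locally at the tubes.

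For the backward direction, given an unlink derivative $L = L_1 \cup \cdots \cup L_g \subset F$, let $\Delta_1, \ldots, \Delta_g \subset S^3$ be disjoint embedded disks with $\partial \Delta_i = L_i$. Push the interiors of the $\Delta_i$ slightly into $B^4$, so that each becomes a properly embedded disk meeting $F \subset \partial B^4$ only along $L_i$, and the pushed disks remain disjoint. The self-linking condition $\lk(L_i, L_i^+) = 0$ identifies the Seifert framing on $L_i$ (induced by $\Delta_i$) with its $F$-framing, which is exactly what is needed to set up the following surgery: delete an annular neighborhood of $L$ in a pushoff of $F$ into $B^4$, and cap the $2g$ resulting boundary circles with two parallel copies of each $\Delta_i$. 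An Euler-characteristic count $(1 - 2g) + 2g = 1$ combined with connectedness of $F - L$ shows the resulting surface $D \subset B^4$ is a disk with $\partial D = K$. With the radial Morse function on $B^4$ and the cap disks arranged at descending levels, $D$ has only minima (at the caps) and saddles (at splittings of level-set circles), but no local maxima, so $D$ is a ribbon disk.

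The main technical obstacle is the backward direction. The subtle points are: selecting the $\Delta_i$'s disjointly using the unlink property, pushing them into $B^4$ without creating extraneous intersections with one another or with the pushoff of $F$, using the self-linking conditions to match framings so the capping is well-defined, and verifying that the resulting Morse function has no index-$2$ critical points. The cross-linking vanishing $\lk(L_i, L_j^+) = 0$ for $i \neq j$ plays its role in keeping parallel pushoff copies of distinct $\Delta_i$'s compatible during the surgery. Once these pieces are handled, the output is a ribbon disk essentially by construction.
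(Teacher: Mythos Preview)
The paper does not supply its own proof of this proposition; it simply cites \cite{CD}. So there is nothing to compare your argument against except the standard argument, which is essentially what you have written. Both directions are the classical ones: tube an immersed ribbon disk at its ribbon singularities to produce a Seifert surface with unlink derivative, and conversely ambient-compress a Seifert surface along disjoint spanning disks for an unlink derivative to produce a ribbon disk.

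That said, there is a genuine bookkeeping error in your forward direction. You fix a ribbon presentation with $g+1$ disks $E_0,\dots,E_g$ and $g$ bands $\beta_1,\dots,\beta_g$, and then assert that resolving the ribbon singularities yields a \emph{genus-$g$} Seifert surface whose derivative has components $L_1,\dots,L_g$ indexed by the bands. This is not right: the genus of the desingularized surface, and hence the number of components of the derivative, equals the number of \emph{ribbon singularities}, not the number of bands. A single band can pierce the disks many times (or not at all), so these counts are unrelated. The fix is purely notational: let $m$ denote the number of ribbon singularities, take $L$ to be the $m$ meridians of the $m$ tubes, and observe that $F$ has genus $m$ and $F\setminus L$ is the connected planar surface obtained by slitting each $E_j$ along the singular arcs it contains. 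With that correction your forward direction goes through.

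Your backward direction is fine. One small comment: the cross-linking conditions $\lk(L_i,L_j^+)=0$ for $i\neq j$ are part of the definition of a derivative, but the construction of the ribbon disk really only uses that $L$ is an unlink (to get disjoint $\Delta_i$) and that $\lk(L_i,L_i^+)=0$ (to match framings). The sentence about cross-linking ``keeping parallel pushoff copies of distinct $\Delta_i$'s compatible'' is vague and not quite where that hypothesis enters; you could safely drop it.
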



A $g$-component link $L \subset S^3$ is \emph{slice} if $L$ bounds a collection of $g$ pairwise disjoint smooth disks in the 4-ball.  Regarding sliceness, Cochran and Davis made the following conjecture:

\begin{conjecture}\cite[stable Kauffman conjecture]{CD}\label{slice}
A knot $K \subset S^3$ is slice if and only if $K$ has a slice derivative $L$.
\end{conjecture}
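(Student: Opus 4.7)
The plan is to prove the two implications of the biconditional separately. For the backward direction, suppose $L = L_1 \cup \cdots \cup L_g$ is a slice derivative on a Seifert surface $F \subset S^3$ for $K$, with disjoint slice disks $\Delta_1, \ldots, \Delta_g \subset B^4$. Since $F-L$ is a connected planar surface and $L$ has $g$ components on a genus-$g$ surface, an Euler characteristic count shows $F - L$ has $2g + 1$ boundary components: $K$ together with two parallel pushoffs $L^+$ and $L^-$ of $L$ in $F$. First I would push $F - L$ slightly into $B^4$ while keeping $K$ fixed on $S^3 = \partial B^4$, obtaining a planar surface $P \subset B^4$ with $\partial P = K \cup L^+ \cup L^-$. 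The vanishing self-linking condition $\lk(L_i, L_j^+) = 0$ ensures that the Seifert framing on each $L_i$ matches the $0$-framing in $S^3$, so two disjoint $0$-framed parallel copies of each $\Delta_i$ yield disks bounded by $L_i^+$ and $L_i^-$. Capping off $L^+ \cup L^-$ with these $2g$ parallel copies produces a surface in $B^4$ bounded by $K$; since $P$ is planar, the result has genus zero and is therefore a disk, provided the caps can be arranged to be mutually disjoint and disjoint from $P$.

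For the forward direction, I would start with the slice disk $D \subset B^4$ and an arbitrary Seifert surface $F \subset S^3$ for $K$, forming the closed genus-$g$ surface $\Sigma = F \cup D$ in $B^4$. The goal is to locate $g$ disjoint simple closed curves $\ell_1, \ldots, \ell_g \subset F$ satisfying two conditions: (a) they span a Lagrangian sublattice of $H_1(F)$ with respect to the Seifert form, which forces the derivative self-linking condition; and (b) they bound a collection of disjoint embedded slice disks in $B^4$. Condition (a) is classical, since for any slice knot the kernel of $H_1(F) \to H_1(B^4 \setminus D)$ is a half-rank metabolizer of the Seifert form; the hard part is upgrading such an algebraic basis to a geometric basis realized by disjoint embedded slice disks.

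The main obstacle in both directions is essentially the same: passing from immersed or null-homologous disks in a $4$-manifold to honestly embedded ones. In the backward direction, the capping disks and the pushed-in surface $P$ generically meet in isolated transverse points in $\text{Int}(B^4)$, and these intersections need not cancel in pairs amenable to the Whitney trick. In the forward direction, curves on $F$ that vanish in $H_1(B^4 \setminus D)$ bound immersed disks there for free, but promoting those to embedded disks is a substantive $4$-dimensional embedding problem. These are precisely the obstructions that separate sliceness from ribbonness, which is why Conjecture~\ref{slice} remains open. A successful attack along these lines therefore seems to require weakening the conclusion---for instance to ``handle-ribbon'' in a homotopy $4$-ball rather than $B^4$---or relaxing the slice hypothesis on $L$ to the weaker $R$-link condition of the authors, which should furnish enough flexibility (handlebody moves instead of embedded disks) to bypass the embedding obstruction.
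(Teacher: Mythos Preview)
This statement is a \emph{conjecture}, and the paper does not prove it. The paper explicitly remarks that one implication is easy and the other is open: ``If $K$ has a slice derivative $L$ in a surface $F$, then a slice disk for $K$ is obtained by taking the union of $F - L$ and copies of the disks in $B^4$ bounded by $L$ \ldots\ The reverse direction remains open.'' Your proposal is therefore not being compared to a proof in the paper, but to this one-line sketch of the easy direction.

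For the backward direction your outline is correct and matches the paper's sketch, but you overstate the difficulty. The ``main obstacle'' you describe---intersections between the pushed-in planar surface $P$ and the capping disks---is not a genuine obstruction. Arrange $P$ to lie in a thin collar $S^3 \times [0,\varepsilon]$ of $\partial B^4$, and take the slice disks $\Delta_i$ (and their $0$-framed parallel copies) to agree with product collars $L_i \times [0,\varepsilon]$ near the boundary and to lie entirely below the collar otherwise. Then $P$ and the caps meet only along their common boundary curves $L^{\pm}$, and the two parallel copies of each $\Delta_i$ are disjoint from one another and from the $\Delta_j$ for $j\neq i$ because the normal bundle of an orientable surface in $B^4$ is trivial. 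So this direction really is straightforward, as the paper says; there is no embedding problem here.

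For the forward direction you correctly identify the genuine gap: a metabolizer for the Seifert form gives curves that bound \emph{immersed} disks in $B^4 \setminus D$, and upgrading these to disjoint embedded slice disks is exactly the unresolved step. Your closing suggestion---weaken ``slice'' to ``handle-ribbon'' and ``slice derivative'' to ``R-link derivative''---is precisely what the paper does in Theorem~\ref{hr}, so your instinct about where the argument can be salvaged is on target. But as a proof of Conjecture~\ref{slice} itself, your proposal does not (and by your own admission cannot) close the forward direction.
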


In this work, we examine an intermediate family of knots.  A knot $K \subset S^3$ is said to be \emph{handle-ribbon} if $K$ bounds a disk $D$ in a homotopy 4-ball $B$ such that the exterior of $D$ can be built without 4-dimensional 3-handles.  An $n$-component link $L \subset S^3$ is called an \emph{R-link} if the manifold obtained by 0-surgery on each component of $L$ is $\#^n (S^1 \X S^2)$.  In parallel with Proposition~\ref{ribbon} and Conjecture~\ref{slice}, we prove

\begin{theorem}\label{hr}
A knot $K \subset S^3$ is handle-ribbon in a homotopy 4-ball if and only if $K$ has an R-link derivative.
\end{theorem}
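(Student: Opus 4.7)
The plan hinges on the following reformulation of the handle-ribbon condition, obtained by inverting the handle decomposition: a disk $D \subset B$ is handle-ribbon if and only if $B \setminus \nu(D)$ admits a handle decomposition based at its boundary $S^3_0(K)$ using only $2$-, $3$-, and $4$-handles (dually to the absence of $3$-handles from the $0$-handle side). Both directions of Theorem~\ref{hr} will be proved by passing between such boundary-based decompositions and R-link derivative data, with the $3$-manifold $\#^{g+1}(S^1 \times S^2)$ playing a central role.

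For the ``if'' direction, suppose $L \subset F$ is an R-link derivative in a genus-$g$ Seifert surface for $K$. The first step is to observe that $K \cup L$ is itself a $(g+1)$-component R-link: since the derivative condition $\lk(L_i, L_j^+) = 0$ forces the Seifert framings of the $L_i$ to equal their $0$-framings, the punctured surface $F \setminus \nu(L)$ caps off in $S^3_0(L) = \#^g(S^1 \times S^2)$ to a disk bounded by $K$, so $S^3_0(K \cup L) = \#^{g+1}(S^1 \times S^2)$. I then construct the candidate complement
\[
V' = (S^3_0(K) \times [0,1]) \;\cup\; (g \text{ 2-handles along } L \subset S^3_0(K) \times \{1\}) \;\cup\; \natural^{g+1}(S^1 \times B^3),
\]
where the last piece fills the $\#^{g+1}(S^1 \times S^2)$ boundary. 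Because $\natural^{g+1}(S^1 \times B^3)$ decomposes from its boundary as $g+1$ $3$-handles and one $4$-handle, $V'$ admits a handle decomposition based at $\partial V' = S^3_0(K)$ with no $1$-handles. To recover a homotopy $4$-ball $B$, I attach a $4$-ball to $V'$ by gluing $D^2 \times S^1 \subset \partial(D^2 \times D^2)$ to the surgery solid torus $T \subset S^3_0(K)$; this realizes the $\infty$-Dehn filling of $K$, producing $\partial B = S^3$ containing $K$, with disk $D = D^2 \times \{0\}$ satisfying $\partial D = K$. A van Kampen argument then confirms $B$ is a homotopy $4$-ball: the $2$-handle relations (dualized to the $0$-handle side) kill every $\pi_1$-generator of $V'$ except the core $K^*$ of $T$, giving $\pi_1(V') = \Z$, and attaching the $4$-ball then kills $K^*$.

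For the ``only if'' direction, suppose $D \subset B$ is handle-ribbon. The dual decomposition of $B \setminus \nu(D)$ based at $S^3_0(K)$ has, by Euler characteristic, $g$ $2$-handles, $g+1$ $3$-handles, and one $4$-handle for some $g \geq 0$. The $2$-handles are attached along a framed link $\Lambda \subset S^3_0(K)$, and in order for the subsequent $3$-handles and $4$-handle to assemble into $\natural^{g+1}(S^1 \times B^3)$, the $3$-manifold obtained from surgery on $\Lambda$ in $S^3_0(K)$ must be $\#^{g+1}(S^1 \times S^2)$. After modifying the decomposition via handle slides over the $0$-framed longitude of $K$ to arrange $\Lambda$ in the knot exterior $S^3 \setminus \nu(K) \subset S^3_0(K)$, we may view $\Lambda$ as a link in $S^3$ with $S^3_0(K \cup \Lambda) = \#^{g+1}(S^1 \times S^2)$, so $K \cup \Lambda$ is an R-link. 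The $3$-handle attaching spheres then yield a closed genus-$g$ surface $\wh F \subset S^3_0(K)$ through $\Lambda$ with planar complement, and $\wh F$ decaps to a Seifert surface $F \subset S^3$ for $K$ with $\Lambda \subset F$ and $F \setminus \Lambda$ planar; the derivative condition $\lk(\Lambda_i, \Lambda_j^+) = 0$ is read off from the $2$-handle framings relative to $\wh F$. Finally, $\Lambda$ itself is an R-link by prime decomposition uniqueness: the derivative structure gives $S^3_0(K \cup \Lambda) = S^3_0(\Lambda) \# (S^1 \times S^2)$, forcing $S^3_0(\Lambda) = \#^g(S^1 \times S^2)$.

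The main obstacle lies in the ``only if'' direction, specifically (a) arranging $\Lambda$ to lie in the knot exterior via handle slides (it may algebraically link the dual knot $K^*$, obstructing the isotopy in general) and (b) extracting the genus-$g$ surface $\wh F$ containing $\Lambda$ as a cut system from the $3$-handle attaching data. Both steps likely require careful bookkeeping about how the $3$-handles are chosen and may leverage the singular-fibration characterization developed in Theorem~\ref{singfibthm}.
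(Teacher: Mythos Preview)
Your ``if'' direction is essentially correct and parallels the paper's argument, though the paper phrases it more tersely: once $K\cup L$ is an R-link, Lemma~\ref{rhandle} immediately gives that $K$ is handle-ribbon.

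The genuine gap is in your ``only if'' direction, precisely at your obstacle~(b). You assert that ``the $3$-handle attaching spheres then yield a closed genus-$g$ surface $\wh F \subset S^3_0(K)$ through $\Lambda$ with planar complement,'' but there is no mechanism that produces such a surface. The $3$-handle attaching spheres live in $\#^{g+1}(S^1\times S^2)$, not in $S^3_0(K)$, and even after Laudenbach--Poenaru they are just a standard system of reducing spheres; they carry no information about a Seifert surface for $K$ or about $\Lambda$ sitting in one as a cut system. In fact there is no reason to expect a genus-$g$ Seifert surface containing $\Lambda$ as a derivative exists at all. Your fallback of invoking the singular-fibration theorem is circular, since that theorem is proved \emph{using} Theorem~\ref{hr}. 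Obstacle~(a), by contrast, is not a real issue: a link in a $3$-manifold can always be isotoped off a knot by general position, and the resulting crossings of $K^*$ translate, back in $S^3$, to handleslides over $K$ (Lemma~\ref{isotop}), which do not change the $4$-manifold.

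The paper's route avoids (b) entirely by abandoning any attempt to control the genus. Starting from the R-link $K\cup J$ (your $K\cup\Lambda$), it takes an \emph{arbitrary} Seifert surface $F'$, tubes $F'$ to make it disjoint from $J$, and then tubes on framed tori carrying the $J_i$ to obtain a (typically high-genus) Seifert surface $F$ containing $J$ with correct framings---a \emph{partial} derivative. The key step is then Proposition~\ref{meat}: because every positive-genus closed surface in $\#^n(S^1\times S^2)$ compresses, one can iteratively add compressing curves $C\subset F$ (each handleslide-equivalent to an unknot by Lemma~\ref{isotop}) until the complement is planar, producing an honest R-link derivative $L$ of the correct cardinality for the enlarged $F$. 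The missing idea in your proposal is exactly this compressibility argument, which manufactures the planar-complement condition rather than trying to read it off the handle data.
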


Since every R-link is slice, it follows that handle-ribbon knots satisfy the stable Kauffman conjecture. We note that one direction of Conjecture~\ref{slice} is straightforward:  If $K$ has a slice derivative $L$ in a surface $F$, then a slice disk for $K$ is obtained by taking the union of $F - L$ and copies of the disks in $B^4$ bounded by $L$ (i.e. a slice disk is obtained by compressing $F$ along slice disks bounded by $L$).  The reverse direction remains open, although we should note that Cochran and Davis disproved the Kauffman conjecture, which posited that {\emph{every}} (genus one) Seifert surface for a slice knot contains a slice derivative. To the contrary, they exhibited a genus one knot $K$ without this property (although their example does have a genus two surface with such a derivative) ~\cite{CD}.

The reader may observe that our notion of a \emph{handle-ribbon knot} is the same as a \emph{strongly homotopy-ribbon knot} appearing elsewhere in the literature~\cite{Meier-Larson,Miller-Zemke,HKP}.  We feel that \emph{handle-ribbon knot} is more accurately descriptive.  We elaborate further in Remark~\ref{strongly} in Section~\ref{sec:prelim} below.

Our work also generalizes a classical theorem of Casson and Gordon about homotopy-ribbon knots, a condition slightly weaker than being handle-ribbon (see Section~\ref{sec:prelim} for relevant definitions and a detailed discussion).  They proved

\begin{theorem}\cite{CG}
A fibered knot $K \subset S^3$ is homotopy-ribbon in a homotopy 4-ball if and only if the fibration of the 0-surgery on $K$ extends over handlebodies.
\end{theorem}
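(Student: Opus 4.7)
The plan is to prove both directions of the biconditional separately. Let $\hat\phi : \hat F \to \hat F$ denote the closed monodromy of the fibration $S^3_0(K) \to S^1$, where $\hat F$ is a closed surface of genus $g$.

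For the reverse direction ($\Leftarrow$), suppose $\hat\phi$ extends to a homeomorphism $\Phi : H_g \to H_g$ of a genus-$g$ handlebody with $\partial H_g = \hat F$. I would first form the mapping torus $Y = M_\Phi$, a 4-manifold fibering over $S^1$ with fiber $H_g$ whose boundary is $M_{\hat\phi} = S^3_0(K)$ and whose fibration restricts to the prescribed one. Then I would attach a $0$-framed 2-handle to $Y$ along a meridian $\mu$ of $K \subset \partial Y$, producing a 4-manifold $B$ with $\partial B = S^3$ (since $0$-surgery on $\mu$ in $S^3_0(K)$ recovers $S^3$) in which the cocore of the 2-handle is a properly embedded disk $D$ with $\partial D = K$. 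To check that $D$ is homotopy-ribbon, observe that $B \setminus D$ deformation retracts onto $Y$, and the inclusion-induced composition $\pi_1(S^3 \setminus K) \to \pi_1(S^3_0(K)) = \pi_1(\hat F) \rtimes \mathbb{Z} \to \pi_1(H_g) \rtimes \mathbb{Z} = \pi_1(Y)$ is surjective because $\pi_1(\hat F) \twoheadrightarrow \pi_1(H_g)$. To check that $B$ is a homotopy 4-ball, I would combine $\Delta_K(1) = \pm 1$ with a block-matrix analysis of $\hat\phi_*$ preserving the Lagrangian $\ker(H_1(\hat F) \to H_1(H_g))$ to force $\det(I - \Phi_*) = \pm 1$ on $H_1(H_g)$, giving $H_1(B) = 0$; the triviality of $\pi_1(B)$ then follows from the relations $x = \Phi_*(x)$ imposed by the 2-handle together with this invertibility.

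For the forward direction ($\Rightarrow$), suppose $K$ bounds a homotopy-ribbon disk $D$ in a homotopy 4-ball $B$, and set $W = B \setminus \nu(D)$, so $\partial W = S^3_0(K)$. Alexander duality gives $H_1(W) = \mathbb{Z}$, hence a map $W \to S^1$ extending the fibration on $\partial W$. The aim is to homotope this map to an honest fibration of $W$ whose fiber is a genus-$g$ handlebody, after which the induced monodromy will extend $\hat\phi$. The key tool is Casson and Gordon's 4-dimensional loop theorem for duality spaces: the homotopy-ribbon condition ensures that $W$ is a free $\mathbb{Z}$-duality space and that its infinite cyclic cover $\tilde W$ has $\pi_2(\tilde W) = 0$, which lets the loop theorem produce embedded compressing disks in $\tilde W$ for a complete system of meridians of $\hat F$. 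Capping off $\hat F$ along these disks exhibits the fiber as a handlebody $H_g$ and packages the monodromy into the desired extension $\Phi$.

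The main obstacle is the loop-theorem argument in the forward direction: promoting the algebraic data (the $\pi_1$-surjection compatible with the $\mathbb{Z}$-quotients) to geometric embedded disks requires the full duality machinery, and the fibered hypothesis is essential because it guarantees that the algebraic fiber of $W \to S^1$ is free of rank $g$, providing the correct target for the compression disks that the loop theorem produces.
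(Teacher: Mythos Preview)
The paper does not prove this theorem; it is stated as a cited result of Casson and Gordon (the paper's Theorem~\ref{cassongordon}) and is used as background for the paper's generalizations to non-fibered knots. There is therefore no proof in the paper to compare against.

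That said, your sketch is broadly faithful to the Casson--Gordon argument, with one genuine gap. In the reverse direction, your computation of $H_1(B)=0$ via $\det(I-\Phi_*)=\pm 1$ is fine, but the claim that ``the triviality of $\pi_1(B)$ then follows from the relations $x=\Phi_*(x)$ \dots\ together with this invertibility'' does not hold as stated: invertibility of $I-\Phi_*$ on $H_1(H_g)$ is an abelian statement and does not by itself force the non-abelian coinvariants $F_g/\langle\!\langle x^{-1}\Phi_*(x)\rangle\!\rangle$ to vanish. The correct argument is to use the surjection $\pi_1(\hat F)\twoheadrightarrow \pi_1(H_g)=F_g$ intertwining $\hat\phi_*$ and $\Phi_*$: since the $0$-trace of $K$ is simply connected, one has $\pi_1(\hat F)/\langle\!\langle x^{-1}\hat\phi_*(x)\rangle\!\rangle=1$, and this surjects onto $F_g/\langle\!\langle y^{-1}\Phi_*(y)\rangle\!\rangle$, giving $\pi_1(B)=1$.

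Your forward-direction plan (pass to $W=B\setminus\nu(D)$, use duality to get a free commutator subgroup of rank $g$, then invoke the Casson--Gordon loop theorem to realize the kernel of $\pi_1(\hat F)\to F_g$ geometrically as a handlebody) is the right outline, though as written it is only a plan. Note also a small imprecision: the $2$-handle you attach to $Y$ should be along the dual knot $K^*\subset S^3_0(K)$ (a section of the fibration), not along ``a meridian of $K$,'' which already bounds a disk in $S^3_0(K)$.
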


Using Theorem~\ref{hr}, we can make an analogous statement for handle-ribbon knots, dropping the condition that $K$ must be fibered.

\begin{theorem}\label{extensions}
A knot $K \subset S^3$ is handle-ribbon in a homotopy 4-ball if and only if there exists a singular fibration of the 0-surgery on $K$ that extends over handlebodies.
\end{theorem}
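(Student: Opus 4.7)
The plan is to use Theorem~\ref{hr} to translate the handle-ribbon condition into the existence of an R-link derivative, and then to set up a correspondence between R-link derivatives of $K$ and singular fibrations of $S^3_0(K)$ that extend over handlebodies in $B \setminus N(D)$.

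For the forward direction, I would start with a handle-ribbon knot $K$ and invoke Theorem~\ref{hr} to obtain an R-link derivative $L = L_1 \cup \cdots \cup L_g$ in a Seifert surface $F$ of genus $g$. In $S^3_0(K)$, the surface $F$ closes up to a closed genus $g$ surface $\widehat{F}$, and since $F - L$ is a $(2g+1)$-punctured sphere, the link $L \subset \widehat{F}$ is a cut system. As the singular fibration I would take the circle-valued Morse function $\pi: S^3_0(K) \to S^1$ dual to $[\widehat{F}] \in H_2(S^3_0(K);\Z)$, perturbed to have only Morse critical points and regular fibers isotopic to $\widehat{F}$. To extend $\pi$, I would use the handle-ribbon decomposition $B \setminus N(D) = \natural^g(S^1 \X D^3) \cup (\text{2-handles})$, where the 2-handles are attached dually along the framed link $L \subset S^3_0(K)$ so that the intermediate boundary is $\#^g(S^1 \X S^2)$. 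The cut system $L$ on $\widehat{F}$ is what lets $\widehat{F}$ bound a handlebody inside $\natural^g(S^1 \X D^3)$ compatibly with $\pi$, giving a handlebody fibration there; attaching the 2-handles extends this to a singular handlebody fibration $\tilde{\pi}: B \setminus N(D) \to S^1$.

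For the reverse direction, given $\pi: S^3_0(K) \to S^1$ and an extension $\tilde{\pi}: B \setminus N(D) \to S^1$ with handlebody fibers, I would pick a regular value $\theta_0$ and set $\widehat{F} = \pi^{-1}(\theta_0)$ and $V = \tilde{\pi}^{-1}(\theta_0)$. Then $\widehat{F}$ is a closed genus $g$ surface meeting the 0-surgery solid torus in a meridian disk, so $F$, obtained by removing that disk, is a Seifert surface for $K$ in $S^3$. Picking a cut system $\{D_1, \ldots, D_g\}$ for the handlebody $V$ gives a $g$-component link $L = \bigcup_i \partial D_i \subset \widehat{F}$, which I isotope into $F$. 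To verify that $L$ is an R-link derivative, I would check: $F - L$ is planar (from $V - \bigcup D_i = B^3$); $\lk(L_i, L_j^+) = 0$ (from the disks $D_i$ bounding pushoffs of $L$ disjointly in $V$); and $L$ is an R-link (0-surgery on $L$ in $S^3$ yields $\#^g(S^1 \X S^2)$ via the handlebody $V$ sitting in the 4-manifold). Theorem~\ref{hr} then gives that $K$ is handle-ribbon.

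The main obstacle I expect is coordinating the critical fibers of $\pi$ and $\tilde{\pi}$. In the forward direction, the Morse critical points of $\pi$ (reflecting the failure of $K$ to be fibered) must be matched by four-dimensional critical points of $\tilde{\pi}$ in a way that still produces handlebody fibers; this demands a careful arrangement of the 2-handles of $B \setminus N(D)$ with respect to the Morse decomposition of $S^3_0(K)$. In the reverse direction, isotoping $L$ off the disk cap onto $F$ while preserving the pushoff and linking data requires care, and the verification that 0-surgery on $L$ yields $\#^g(S^1 \X S^2)$ must be extracted from the four-dimensional handlebody $V$ rather than merely from its three-dimensional boundary.
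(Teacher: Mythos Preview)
Your overall architecture matches the paper's: both directions are routed through Theorem~\ref{hr}, and the reverse direction is essentially the paper's Lemma~\ref{reverse_direction} (pick a handlebody fiber, take a cut system, verify it is an R-link derivative). However, there are genuine gaps in each direction that the paper fills with tools you have not identified.

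In the forward direction, you cap off the single thin fiber $\widehat{F}$ with a handlebody determined by $L$, but you have not explained why \emph{every} regular fiber of $\pi$ bounds a handlebody in the extension. Saying that $L$ ``lets $\widehat{F}$ bound a handlebody inside $\natural^g(S^1\times D^3)$ compatibly with $\pi$, giving a handlebody fibration there'' skips the entire difficulty: after you glue $H\times I$ to a collar of $\widehat{F}$, the remaining boundary is $\#^k(S^1\times S^2)$ equipped with a self-indexing Morse function of some genus $g\geq k$, and you must extend this Morse function over the remaining $\natural^k(S^1\times B^3)$ so that all level sets get capped by handlebodies. The paper does this in Lemma~\ref{capoff}, and the key input is Waldhausen's theorem (uniqueness of Heegaard splittings of $\#^k(S^1\times S^2)$), which lets one slide the induced Heegaard diagram to a standard one and then close up with $k$ fold arcs plus $g-k$ arcs carrying cusps. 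Without Waldhausen, there is no reason the handlebody structure on one fiber propagates to the others through the singular levels. (Incidentally, the turned-over handle count for $X=B\setminus D$ is one $0$-handle, $k{+}1$ $1$-handles, and $k$ $2$-handles, not $\natural^g(S^1\times D^3)$ plus $2$-handles.)

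In the reverse direction, your check that $L$ is an R-link is where the work lies, and ``via the handlebody $V$ sitting in the 4-manifold'' is not yet an argument. The paper proves this by decomposing $X$ as $X_0\cup X_1$, where $X_0$ is the collar together with $H\times I$ (contributing $k$ $2$-handles and one $3$-handle to a relative decomposition of $B_P(K)$), and then showing directly from the base diagram that $X_1$ is a $4$-dimensional $1$-handlebody of genus $k$: the product regions $X_\pm$ deformation retract onto $X_\pi$, and in $X_\pi$ each cuspless fold contributes a $1$-handle while each cusped fold contributes nothing. This handle count is what forces $K\cup L$ (hence $L$) to be an R-link; the mere existence of the disks $D_i\subset V$ does not by itself give $S^3_{\vec 0}(L)=\#^g(S^1\times S^2)$.
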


We offer two precise versions of Theorem~\ref{extensions}; these assertions appear as Theorems~
\ref{fibration1} and~\ref{fibration2}.  Theorem~\ref{fibration1} extends a singular fibration $p$ from the 0-surgery on $K$, denoted $S^3_0(K)$, to a generic map (or Morse-2 function) $P$ from a 4-manifold $X$ to the annulus $S^1 \X I$ which maps $\pd X$ to $S^1 \X \{0\}$ and restricts to $p$ on $\pd X = S^3_0(K)$.  Alternatively, Theorem~\ref{fibration2} uses tools developed by the first author in~\cite{maggie} to produce a circular Morse function $\Ph:X \rightarrow S^1$ such that $\Ph|_{\pd X} = p$.  In either case, regular fibers of $p$ are capped off by handlebodies in $X$.  The proofs of Theorems~\ref{fibration1} and ~\ref{fibration2} are quite different, so we have included both.  The existence of either type of extension is equivalent, since both are equivalent to $K$ being handle-ribbon in a homotopy 4-ball, although the definition via circular Morse functions is less rigid than the definition via generic maps.  This flexibility may give a strategy for finding singular fibrations with smaller genus fibers that admit extensions (see Remark~\ref{thinremark}).

The plan of the paper is as follows:  In Section~\ref{sec:prelim}, we offer definitions and discuss handle-ribbon knots and homotopy-ribbon knots in greater detail.  In Section~\ref{sec:rlink}, we prove Theorem~\ref{hr}, and in Section~\ref{sec:extend}, we prove Theorem~\ref{fibration1}.  In Section~\ref{sec:alternate}, we adapt techniques from work of the first author~\cite{maggie} to prove Theorem~\ref{fibration2}.  Finally, in Section~\ref{sec:trisections}, we demonstrate that the extensions of singular fibrations over handlebodies by generics maps give rise to natural trisections, generalizing work of Jeffrey Meier and the second author~\cite{MZDehn,FHRkS}.

\subsection*{Acknowledgements}

A portion of this work was completed while we were guests at the Max Planck Institute for Mathematics in Bonn, and we are extremely grateful to MPIM for its support and hospitality.  We also thank Jeffrey Meier for helpful conversations and for asking about natural trisections of extensions of singular fibrations.  The first author is supported by NSF grant DGE-1656466. The second author is supported by NSF grant DMS-1664578.

\section{Preliminaries}\label{sec:prelim}
All work is in the smooth category, where $S^n$ and $B^n$ denote the standard smooth $n$-sphere and $n$-ball, respectively.  Manifolds are orientable unless otherwise noted.  If $Y$ is a submanifold of $X$, let $X \setminus Y = X - \eta(Y)$, where $\eta(\cdot)$ represents an open regular neighborhood.  A closed 3-manifold $Y$ is obtained by \emph{Dehn surgery} on a knot $K \subset S^3$ with slope $a/b$ if $Y$ is constructed by gluing a solid torus $V$ to $S^3 \setminus K$ via a diffeomorphism of their boundaries that maps a meridian of $V$ to the $a/b$ curve on $\pd (S^3 \setminus K)$ in preferred coordinates.  In this case, $Y$ is denoted $Y = S^3_{a/b}(K)$.  The \emph{dual} $K^*$ to $K$ is the core of the surgery solid torus $V \subset S^3_{a/b}(K)$.  These concepts and notation extend to $n$-component links $L \subset S^3$, where the boundary slope $a/b$ is replaced with an $n$-tuple of boundary slopes corresponding to the $n$ boundary components of $S^3 \setminus L$.  A closely related idea is a \emph{knot trace} or \emph{link trace}:  Given a boundary slope $a/b$ and a knot $K \subset S^3$, the \emph{knot trace} $X_{a/b}(K)$ is defined to be the compact 4-manifold obtained by attaching a 4-dimensional 2-handle to $B^4$ along $K$ with framing $a/b$.  The key relationship here is that $\pd(X_{a/b}(K)) = S^3_{a/b}(K)$.  A relative handle decomposition of $X_{a/b}(K)$ is obtained by attaching a 2-handle to $K^* \subset S^3_{a/b}(K)$ and capping off the resulting $S^3$ boundary component with a 4-handle.  Link traces are defined similarly.  We also use the notion of relative traces:  The \emph{relative trace} $B_{a/b}(K)$ is obtained by attaching a 2-handle to $K \X \{1\} \subset S^3 \X I$ with framing $a/b$.  Thus, the trace $X_{a/b}(K)$ may be obtained by capping off the $S^3$ boundary component of the relative trace $B_{a/b}(K)$ with a 4-ball.

A collection of links with interesting traces is the family of \emph{R-links}.  Recall that an $n$-component link $L \subset S^3$ is an R-link if $S^3_{\vec 0}(L)$ (the manifold obtained by performing $0$-surgery on each component of $L$) is the 3-manifold $\#^n(S^1 \X S^2)$.  This nomenclature arises from Property R conjecture (proved by Gabai~\cite{gabai}), which asserts that the only 1-component R-link is the unknot.  The generalized Property R conjecture (GPRC, Kirby Problem 1.82~\cite{kirby}) proposes that every R-link is handleslide equivalent to an $n$-component unlink.  The GPRC is discussed in great detail in~\cite{GST}.

The study of R-links is closely related to the smooth 4-dimensional Poincar\'e conjecture (S4PC):  If $L$ is an $n$-component R-link, then $L$ gives rise to a closed 4-manifold $X_L$ built from a 0-handle, $n$ 2-handles, $n$ 3-handles, and a 4-handle, obtained by capping off the link trace $X_{\vec 0}(L)$ (made up of a 0-handle and $n$ 2-handles) with $n$ 3-handles and a 4-handle, where the condition $S^3_{\vec 0}(L) = \#^n(S^1 \X S^2)$ implies that the capping off is possible.  Moreover, this capping is unique (up to diffeomorphism) by Laudenbach-Poenaru~\cite{LP}, and thus $L$ completely determines $X_L$ up to diffeomorphism.  Note that $X_L$ is simply-connected (since it can be built without 1-handles), and $\chi(X_L) = 2$, so that $X_L$ is a homotopy 4-sphere.  Conversely, if $X$ is any homotopy 4-sphere with a decomposition with no 1-handles, then $X$ gives rise to an R-link $L$ (the attaching link for the 2-handles in $X$) such that $X = X_L$.

\subsection{Between ribbonness and sliceness}

Consider $S^3$ as the boundary of a homotopy 4-ball $B$.  If $L \subset S^3$ bounds a collection $D$ of pairwise disjoint, properly embedded disks in $B$, we will consider various restrictions on the disks $D$.  First we state a standard lemma.  For a proof, see Lemma 2.1 of~\cite{FHRkS}.

\begin{lemma}
If $(S^3,L) = \pd (B,D)$, then $\pd(B \setminus D) = S^3_{\vec 0}(L)$.
\end{lemma}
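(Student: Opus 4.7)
The plan is to directly describe $\partial(B \setminus D)$ as a Dehn filling of the link exterior $S^3 \setminus \eta(L)$, and to identify the filling slope as the $0$-framing on each component of $L$. Since each disk $D_i$ is properly embedded and contractible, its normal bundle in $B$ is trivial, so a tubular neighborhood $\eta(D_i)$ is diffeomorphic to $D_i \X D^2$, meeting $\partial B = S^3$ in $(\partial D_i) \X D^2 = \eta(L_i)$. Removing $\eta(D)$ from $B$ therefore yields a 4-manifold whose boundary decomposes as
$$\partial(B \setminus D) = \bigl(S^3 \setminus \eta(L)\bigr) \cup \bigsqcup_{i=1}^n T_i,$$
where each $T_i \cong D_i \X \partial D^2$ is a solid torus attached to $\partial \eta(L_i)$ along the torus $\partial D_i \X \partial D^2$.

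Next, I would identify the filling slope of each $T_i$. The meridian of $T_i$, namely the boundary of a disk $D_i \X \{\text{pt}\}$ inside $T_i$, is a parallel copy $L_i^+$ of $L_i$ in $S^3$ determined by the framing coming from a nowhere-vanishing section of the normal bundle of $D_i$. Pushing $D_i$ along this section produces a properly embedded disk $D_i^+ \subset B$ with $\partial D_i^+ = L_i^+$ and $D_i \cap D_i^+ = \emptyset$. Using the standard fact that the linking number in $S^3$ of two disjoint knots bounding surfaces in $B$ equals the algebraic intersection of those surfaces, we conclude
$$\lk_{S^3}(L_i, L_i^+) = D_i \cdot D_i^+ = 0,$$
so $L_i^+$ is the $0$-framed longitude of $L_i$. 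Therefore each $T_i$ is glued to $\partial \eta(L_i)$ so that its meridian matches the $0$-framed longitude of $L_i$, which is precisely the defining gluing for $0$-surgery. Performing this for every $i$ identifies $\partial(B \setminus D)$ with $S^3_{\vec 0}(L)$.

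The only subtle point is the identification of the linking pairing in $S^3$ with the algebraic intersection pairing of relative $2$-cycles in the homotopy $4$-ball $B$. This is justified by noting that the inclusion $S^3 \setminus L_i \hookrightarrow B \setminus L_i$ induces an isomorphism on $H_1$ (both are $\Z$, generated by a meridian of $L_i$), so a pushoff of $L_i$ that bounds a surface in $B$ disjoint from $L_i$ must already be null-homologous in $S^3 \setminus L_i$. I would cite this as a standard fact rather than reprove it, after which the argument above is essentially bookkeeping.
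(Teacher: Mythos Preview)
Your overall strategy is exactly right and is the standard argument: trivialize the normal bundle of each $D_i$, read off the resulting solid torus $T_i = D_i \times \partial D^2$ in the boundary of the exterior, and identify its meridian $\partial D_i \times \{\text{pt}\}$ with the $0$-framed longitude of $L_i$.  The paper itself does not give a proof of this lemma; it simply cites Lemma~2.1 of~\cite{FHRkS}.  So there is no ``paper's proof'' to compare against, and your self-contained argument is welcome.

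There is, however, a genuine slip in your last paragraph.  You claim that the inclusion $S^3 \setminus L_i \hookrightarrow B \setminus L_i$ induces an isomorphism on $H_1$, with both groups equal to $\Z$.  This is false: since $L_i$ lies in $\partial B$ and $B$ is simply-connected, a meridian of $L_i$ can be pushed into the interior of $B$ and capped off by a disk there, disjoint from $L_i$.  In fact $H_1(B\setminus L_i)=0$, so the map $\Z \to 0$ is certainly not injective, and your deduction that $L_i^+$ is null-homologous in $S^3 \setminus L_i$ does not follow from this.

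The fix is to remove the entire disk rather than just its boundary.  Replace $B \setminus L_i$ by $B \setminus D_i$.  Excision and the long exact sequence of the pair $(B,B\setminus D_i)$, together with $H_2(B)=H_1(B)=0$, give $H_1(B\setminus D_i)\cong \Z$, generated by a meridian of $D_i$; and a meridian of $L_i$ in $S^3$ is such a meridian, so the inclusion $S^3 \setminus L_i \hookrightarrow B \setminus D_i$ does induce an isomorphism on $H_1$.  Since $D_i^+$ is disjoint from $D_i$, the pushoff $L_i^+$ is null-homologous in $B\setminus D_i$, hence in $S^3\setminus L_i$, giving $\lk(L_i,L_i^+)=0$ as you want.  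With this correction your proof goes through.
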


Suppose that $(S^3,L) = \pd (B^4,D)$.  If the restriction of the radial Morse function $h$ on $B^4$ to $D$ is Morse and contains only saddles and minima (but no maxima), then the disks $D$ and link $L$ are called \emph{ribbon}.  In a natural construction (see~\cite[Section 6.2]{GS}, for example), the critical points of $h|_D$ for a ribbon disk $D$ can be used to give a handle decomposition of $B^4 \setminus D$ with a 0-handle, 1-handles, and 2-handles (but no 3-handles).  This construction motivates the next definition:  If $(S^3,L) = \pd(B,D)$, where $B$ is any homotopy 4-ball, such that $B \setminus D$ has a handle decomposition without 3-handles, then the disks $D$ and link $L$ are called \emph{handle-ribbon in $B$}.  (Handle-ribbon knots are also called \emph{strongly homotopy-ribbon} elsewhere; see Remark~\ref{strongly} below.)  Turning this handle decomposition upside down yields a relative handle decomposition of $B \setminus D$ obtained by attaching 2-, 3-, and 4-handles to $S^3_{\vec 0}(L)$, and thus the map $i_*:\pi_1(S^3 \setminus L) \rightarrow \pi_1(B \setminus D)$ induced by inclusion must be surjective, since the inclusion factors through the inclusion $S^3 \setminus L \hookrightarrow S^3_{\vec 0}(L)$.  Thus, if $(S^3,L) = \pd (B,D)$, where $i_*:\pi_1(S^3 \setminus L) \rightarrow \pi_1(B \setminus D)$ is surjective, then the disks $D$ and the link $L$ are called \emph{homotopy-ribbon in $B$}.  Finally, for any $(S^3,L) = \pd (B,D)$ without restrictions, the disks $D$ and link $L$ are called \emph{slice in $B$}.  

In summary
\begin{equation}\{\text{ribbon links} \} \subset \{\text{handle-ribbon links}\} \subset \{\text{homotopy-ribbon links}\} \subset \{ \text{slice links}\}.\label{sliceribbon}\end{equation}
However, none of these containments is known to be strict. The slice-ribbon conjecture posits that when only considering $B^4$, all of these containments are equivalences. Thus, affirmative answers to the slice-ribbon conjecture and the relative 4D Poincar\'{e} conjecture (that every homotopy 4-ball is diffeomorphic to $B^4$) would together imply all containments in~\eqref{sliceribbon} are equivalences.

 Note that any R-link $L$ can be used to build a homotopy 4-ball $B_L$ by removing the 0-handle of $X_L$, where the cores of the 2-handles attached along $L$ become handle-ribbon disks for $L$ in $B_L$.  Using this principle as a guide, Gompf, Scharlemann, and Thompson constructed their famous potential counterexamples to the slice-ribbon conjecture in~\cite{GST}.

\begin{remark}\label{strongly}
Our presented definition of a \emph{homotopy-ribbon knot} agrees with that of Casson-Gordon~\cite{CG} but differs from that of Cochran~\cite{cochran}.  To unify these two definitions, Meier and Larson renamed Cochran's alternative to be a \emph{strongly homotopy-ribbon knot}~\cite{Meier-Larson}, which has been used by several other authors as well~\cite{Miller-Zemke,HKP}.  In this paper, we have decided to use \emph{handle-ribbon knot} in the place of \emph{strongly homotopy-ribbon knot}, since we judge ``handle-ribbon" to be more clearly descriptive.  Whereas a homotopy-ribbon knot bounds a disk satisfying the same homotopy-theoretic condition as a ribbon disk, a handle-ribbon knot bounds a disk whose complement admits a handle decomposition resembling that of a ribbon disk.
\end{remark}

\subsection{Stable equivalence of R-links}

Suppose that $L$ is a framed link, with components $L_1, L_2 \subset L$.  Let $\A$ be a framed arc connecting $L_1$ and $L_2$ and with $\mathring{\A}$ disjoint from $L$.  Then $L_1 \cup L_2 \cup \A$ has a framed pair of pants neighborhood $N \subset S^3$, where two boundary components of $N$ are isotopic to $L_1$ and $L_2$, and the third, call it $L_1'$, is said to be related to $L_1$ and $L_2$ by a \emph{handleslide}.  If $L'$ is the link obtained by replacing $L_1$ with $L_1'$, we say $L$ and $L'$ are \emph{handleslide equivalent}, and it is well-known that if $L$ is a (zero-framed) R-link, then $L'$ is also an R-link, with $X_L' = X_L$.

Additionally, if $L$ is an R-link and $U$ is an unlink, then the split union $L \sqcup U$ is also an R-link, with $X_{L \sqcup U} = X_L$.  From a 4-dimensional perspective, the relative handle decomposition of $X_{L \sqcup U}$ is obtained by adding $|U|$ pairs of canceling 2- and 3-handles to the handle decomposition of $X_L$.  Thus, we say that two R-links $L$ and $L'$ are \emph{stably equivalent} if there are unlinks $U$ and $U'$ such that $L \sqcup U$ and $L' \sqcup U'$ are handleslide equivalent.  In this case, we again have $X_{L'} = X_L$, where the relative handle decompositions are related by handleslides and by adding/deleting canceling pairs of 2- and 3-handles.  As an example, consider the following lemma, which we use in Section~\ref{sec:rlink}.

\begin{lemma}\label{kremove}
Suppose that $K$ has a derivative $L$ such that $K \cup L$ is an R-link.  Then $L$ is an R-link, and $L$ and $K \cup L$ are stably equivalent.
\end{lemma}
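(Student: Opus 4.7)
The plan is to exploit the planar surface $F' = F \setminus \eta(L)$ in two parallel ways: first, to construct an explicit disk bounded by $K$ in $S^3_{\vec 0}(L)$, yielding the R-link assertion; and second, to realize handleslides of $K$ over components of $L$ via framed bands inside $F'$, exhibiting the stable equivalence.

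For the first assertion, observe that $F'$ is planar with $2g+1$ boundary components: $K$ together with two parallel pushoffs $L_i^+$ and $L_i^-$ of each $L_i$ inside $F$. The derivative condition $\lk(L_i, L_j^+) = 0$ says precisely that the surface framing on $L$ is the Seifert framing, so in $S^3_{\vec 0}(L)$ each $L_i^\pm$ bounds a meridian disk of its surgery solid torus. Capping off the $2g$ non-$K$ boundary components of $F'$ with these disks yields a disk $D \subset S^3_{\vec 0}(L)$ bounded by $K$; since $D$ coincides with $F$ near $K$, the disk framing of $K$ induced by $D$ equals the Seifert framing of $K$ in $S^3$. Thus $0$-surgery on $K \subset S^3_{\vec 0}(L)$ is $0$-surgery on an unknot along its disk framing, giving $S^3_{\vec 0}(L) \# (S^1 \X S^2) = S^3_{\vec 0}(K \cup L) = \#^{n+1}(S^1 \X S^2)$; uniqueness of prime decompositions of $3$-manifolds then forces $S^3_{\vec 0}(L) = \#^n (S^1 \X S^2)$, so $L$ is an R-link.

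For the stable equivalence, choose $2g$ pairwise disjoint, properly embedded arcs $\alpha_1, \ldots, \alpha_{2g}$ in $F'$ connecting $K$ to the $2g$ boundary components $L_i^\pm$, one arc per component, and promote each $\alpha_k$ to a framed band using the surface framing from $F$. Each band realizes a handleslide of $K$ over the corresponding $L_i$, and after the full sequence of $2g$ slides the resulting knot $K^*$ bounds the disk obtained by cutting $F'$ along the arcs, so $K^*$ is unknotted and split from $L$. The main obstacle is confirming that the framing of $K^*$ is zero: the two bands to $L_i^+$ and $L_i^-$ induce slides of opposite sign because $L_i^+$ and $L_i^-$ inherit opposite orientations as boundary components of $F'$, and the derivative hypothesis $\lk(L_i, L_j^+) = 0$ ensures that the relevant linking numbers $\lk(\cdot, L_i)$ are unaffected by intermediate slides, so the two framing contributions $\pm 2\lk(K, L_i)$ from the pair of slides over each $L_i$ cancel. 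Hence $K \cup L$ is handleslide equivalent to $L \sqcup K^*$, a split union of $L$ with a $0$-framed unknot, which is the desired stable equivalence.
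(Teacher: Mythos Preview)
Your proof is correct and the core mechanism---sliding $K$ over the components of $L$ along bands in $F$ until it becomes a trivial curve split from $L$---is exactly the paper's argument. Two remarks on how your writeup differs from the paper's very terse proof:

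First, your separate argument that $L$ is an R-link (capping $F\setminus\eta(L)$ with meridian disks and invoking prime decomposition) is correct but unnecessary: once you have established that $K\cup L$ is handleslide equivalent to $L\sqcup U$ with $U$ a $0$-framed unknot, the R-link property of $K\cup L$ immediately transfers to $L$, since stable equivalence preserves being an R-link. The paper simply deduces both conclusions from the one observation about handleslides.

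Second, your framing verification is more elaborate than needed. Since each $L_i$ lies on the Seifert surface $F$ for $K$, we already have $\lk(K,L_i)=0$ for every $i$; together with the derivative condition $\lk(L_i,L_j^+)=0$, every handleslide in the sequence contributes $0$ to the framing, not merely contributions that cancel in pairs. Equivalently (and this is the paper's implicit viewpoint), all curves remain in $F$ with surface framing throughout, and the terminal curve bounds a disk in $F$, so its surface framing is visibly the $0$-framing.
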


\begin{proof}
Suppose $L \subset F$, where $F$ is a Seifert surface for $K$.  Since $F \setminus L$ is planar, there is a sequence of handleslides in $F$ of $K$ over components of $L$ converting $K$ to a trivial curve in $F \setminus L$, and thus $K \cup L$ and $L$ are stably equivalent.
\end{proof}

In order to better understand handleslide equivalence, we will repeatedly use the next lemma, the content of which is contained in the proof of Proposition 3.2 from~\cite{GST}.

\begin{lemma}\label{isotop}
Let $K$ and $J$ be disjoint 0-framed links in $S^3$. If $K$ is isotopic in $S^3_{\vec 0}(J)$ to another link $K'$ disjoint from the duals $J^*$, then $K \cup J$ and $K' \cup J$ are handleslide equivalent in $S^3$.
\end{lemma}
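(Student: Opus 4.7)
My plan is to decompose the ambient isotopy in $S^3_{\vec 0}(J)$ into two types of elementary moves that are visible in $S^3$: ambient isotopies supported in the complement $S^3\setminus J$, and individual transverse crossings of the cocore meridian disks of the surgery solid tori. Let $V_i\subset S^3_{\vec 0}(J)$ denote the surgery solid torus glued in along $J_i$, with core the dual knot $J_i^*$ and cocore meridian disk $D_i\subset V_i$ bounded by the $0$-framed longitude of $J_i$, and note that $S^3_{\vec 0}(J)\setminus \bigsqcup_i \text{int}(V_i) = S^3\setminus J$.

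The first step is to take an isotopy $H\colon K\times I\to S^3_{\vec 0}(J)$ from $K$ to $K'$ and, because $K$ and $K'$ are both disjoint from $J^*$, to homotope $H$ rel boundary so that its image is transverse to $\bigsqcup_i D_i$. After a further small perturbation one may arrange that the finitely many resulting intersection points on $K\times I$ occur at distinct times $0<t_1<\cdots<t_N<1$. On each open subinterval $(t_j,t_{j+1})$ the isotopy lies entirely in $S^3\setminus J$, so its restriction is realized by an ambient isotopy of $S^3$ fixing $J$; such pieces preserve the framed isotopy type of $K_t\cup J$ in $S^3$ and contribute nothing to the handleslide sequence.

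The crux is the local analysis at each crossing time $t_j$. A small arc $\alpha_j\subset K_{t_j-\epsilon}$ passes through some disk $D_{i_j}$ and becomes an arc $\alpha_j'\subset K_{t_j+\epsilon}$ on the opposite side. The arcs $\alpha_j$ and $\alpha_j'$ cobound a small disk in $V_{i_j}$ meeting $J_{i_j}^*$ transversely in one point, so viewed in $S^3$ one obtains $\alpha_j'$ from $\alpha_j$ by a band sum with a $0$-framed longitude of $J_{i_j}$ along a band contained in $V_{i_j}$; this is precisely a handleslide of the relevant component of $K_{t_j-\epsilon}$ over $J_{i_j}$. Concatenating the ambient isotopies on the subintervals with the handleslides at the times $t_j$ produces the desired handleslide equivalence between $K\cup J$ and $K'\cup J$ in $S^3$.

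I expect the main obstacle to be verifying rigorously that a single transverse crossing of a meridian disk corresponds to exactly one handleslide, with all framings preserved, rather than to a more complicated sequence of moves. This demands a careful local model near each intersection point, together with a bookkeeping argument showing that the normal framing of $K_t$ inherited from $S^3_{\vec 0}(J)$ matches, via the identification $S^3\setminus J = S^3_{\vec 0}(J)\setminus \bigsqcup_i \text{int}(V_i)$, the framing obtained in $S^3$ after the handleslide; the point is that because $J_{i_j}$ is $0$-framed, the band sum introduces no extra twisting beyond the linking number change that the isotopy is already forcing.
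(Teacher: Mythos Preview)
Your approach is essentially the same as the paper's: decompose the isotopy in $S^3_{\vec 0}(J)$ into pieces supported in the link complement together with finitely many elementary ``crossings'' through the surgery region, and identify each crossing with a band sum onto a $0$-framed longitude of a component of $J$, i.e.\ a handleslide.  The paper's proof compresses this into a single sentence, but the content is identical.

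One technical slip is worth fixing.  You make the trace $H\colon K\times I\to S^3_{\vec 0}(J)$ transverse to the cocore disks $D_i$; since $D_i$ is $2$-dimensional and the trace is $2$-dimensional in a $3$-manifold, the generic intersection is a $1$-manifold, not a finite set of points, so the decomposition into ``crossing times'' $t_1<\cdots<t_N$ does not follow as stated.  What you want instead is to make the trace transverse to the $1$-dimensional cores $J_i^*$ (equivalently, put the isotopy in general position with respect to $J^*$); then $H^{-1}(J^*)$ is a finite set of points, each giving a time at which a strand passes through a dual, and your local analysis---the two arcs cobounding a small disk hitting $J_{i_j}^*$ once, hence differing by a band onto a meridian of $J_{i_j}^*$, which is a $0$-framed pushoff of $J_{i_j}$ in $S^3$---goes through verbatim.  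This is exactly the observation the paper records.
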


\begin{proof}
The lemma follows immediately from the observation that any isotopy of $K$ in $S^3_{\vec 0}(J)$ that passes a strand of $K$ through a dual in $J^*$ can be realized as a move in $S^3_{\vec 0} \setminus J^*$ by banding $K$ to a meridian of $J^*$.  But $S^3_{\vec 0} \setminus J^* = S^3 \setminus J$, where a meridian of $J^*$ in the former is a 0-framed pushoff in the latter, so this banding is precisely a handleslide over a component of $J$ in $S^3$.
\end{proof}

\subsection{The characterization of Casson and Gordon}

In a classical and celebrated result, Casson and Gordon provided an alternate characterization for fibered knots that are homotopy-ribbon in a homotopy 4-ball.  A knot $K$ in $S^3$ is \emph{fibered} if $K$ has a Seifert surface $F$ such that $S^3 \setminus K$ is the mapping torus $F \X_{\varphi} S^1$, where $\varphi: F \rightarrow F$ is a diffeomorphism such that $\varphi|_{\pd F}$ is the identity.  By capping off each fiber of $F \X_{\varphi} S^1$ with a disk, we can extend the fibration of $S^3 \setminus K$ over $S^3_0(K)$, where $\wh F$ is the (closed) capped off Seifert surface, $\wh \varphi:\wh F \rightarrow \wh F$ is the natural extension of $\varphi$ to $\wh F$, and $S^3_0(K) = \wh F \X_{\wh \varphi} S^1$.  The map $\wh \varphi$ is called the \emph{closed monodromy} associated to the fibered knot $K$.  If there exists a handlebody $H$ with $\wh F = \pd H$ and a diffeomorphism $\Phi:H \rightarrow H$ such that $\Phi|_{\pd H} = \wh \varphi$, we say that $\wh \varphi$ \emph{extends over $H$}.  With these definitions, we can state Casson-Gordon's result:

\begin{theorem}~\cite{CG}\label{cassongordon}
A fibered knot $K \subset S^3$ is homotopy-ribbon in a homotopy 4-ball $B$ if and only if the closed monodromy associated to $K$ extends over a handlebody $H$.
\end{theorem}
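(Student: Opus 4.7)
The plan is to prove both directions separately. The \emph{if} direction is constructive, while the \emph{only if} direction requires a covering-space analysis of the disk complement.

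For the \emph{if} direction, suppose $\Phi: H \to H$ is a diffeomorphism of a handlebody $H$ with $\pd H = \wh F$ extending $\wh \varphi$. I would form the mapping torus $W = H \X_\Phi S^1$, whose boundary is $\pd H \X_{\Phi|_{\pd H}} S^1 = \wh F \X_{\wh \varphi} S^1 = S^3_0(K)$. Gluing in a $D^2 \X D^2$ along the $0$-surgery solid torus in $\pd W$ produces a 4-manifold $B$ with $\pd B = S^3$ containing a disk $D$ bounded by $K$. A van Kampen / Mayer--Vietoris computation using the mapping-torus Wang sequence would verify that $B$ is a homotopy 4-ball ($\pi_1(B) = 1$ because the attached 2-handle along the meridian of $K$ kills the $\mathbb{Z}$-factor in $\pi_1(W)$, and the remaining relation collapses $\pi_1(H)$; the homology check is similar). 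Moreover, the inclusion $F \hookrightarrow H$ induces a surjection on $\pi_1$ (a standard feature of handlebody fillings of closed surfaces), so $\pi_1(S^3 \setminus K) = \pi_1(F \X_\varphi S^1)$ surjects onto $\pi_1(H \X_\Phi S^1) = \pi_1(B \setminus D)$, giving the homotopy-ribbon condition.

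For the \emph{only if} direction, suppose $K$ bounds a homotopy-ribbon disk $D$ in a homotopy 4-ball $B$. The condition $H_1(B \setminus D) = \Z$ (combined with the surjectivity of $i_*$) lets me pass to the infinite cyclic cover $\widetilde{B \setminus D}$, whose boundary is the infinite cyclic cover of $S^3_0(K) = \wh F \X_{\wh \varphi} S^1$, namely $\wh F \X \R$. The goal is to produce a compact handlebody $H \subset \widetilde{B \setminus D}$ with $\pd H = \wh F \X \{0\}$. Given such an $H$, the generating deck transformation $T$ would carry $H$ to a handlebody bounded by the shifted fiber $\wh F \X \{1\}$, and a 4-dimensional product-structure argument would identify the cobounded region with $H \X [0,1]$. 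The resulting identification $B \setminus D \cong H \X_\Phi S^1$ would then give a self-diffeomorphism $\Phi: H \to H$ whose restriction to $\pd H$ is precisely $\wh \varphi$.

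The hard part is producing the handlebody $H$ inside the infinite cyclic cover. Casson and Gordon accomplish this via a loop theorem for duality spaces: using that $\pi_1(\widetilde{B \setminus D})$ has suitably trivial relative second-dimensional behavior (a consequence of homotopy-ribbonness plus Poincar\'e--Lefschetz duality for the cover), one finds embedded compressing disks for the fiber $\wh F \X \{0\}$ inside $\widetilde{B \setminus D}$. Compressing along a maximal disjoint system of such disks yields a compact 3-manifold bounded by $\wh F$; an Euler characteristic and $\pi_1$-rank count then forces this manifold to be a handlebody of the correct genus. The delicate points are that the compressing disks must be arranged to live in the 4-dimensional cover (not merely in $B \setminus D$) and that the compressions fit together compatibly with the deck translation, so that the fundamental domain cut out yields the desired product structure and monodromy extension.
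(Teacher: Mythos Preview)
The paper does not give its own proof of this theorem; it is quoted from Casson and Gordon~\cite{CG} and used as motivation and as input for later results (e.g., Theorem~\ref{withJeff}). So there is no proof in the paper to compare against.

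That said, your sketch of the \emph{if} direction is essentially the standard construction and is fine (though your remark about ``the attached 2-handle along the meridian of $K$'' is slightly garbled: the relevant $2$-handle is attached along the dual $K^*$, which is a section of the bundle $W \to S^1$; killing it collapses the HNN stable letter, and then surjectivity of $\pi_1(F)\to\pi_1(H)$ together with $\pi_1(S^3\setminus K)/\langle\!\langle\mu\rangle\!\rangle=1$ forces $\pi_1(B)=1$).

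Your \emph{only if} sketch has the right architecture---pass to the infinite cyclic cover and use a 4-dimensional loop-theorem/duality argument to compress the fiber to a handlebody $H$---but the final step overreaches. You assert that the region between $H$ and its deck-translate $T(H)$ is a smooth product $H\times[0,1]$, yielding $B\setminus D \cong H\times_\Phi S^1$. That product conclusion would require a smooth 4-dimensional $h$-cobordism statement that is not available. Casson and Gordon do \emph{not} establish this identification; they extract only the extension $\Phi$ of $\wh\varphi$ over $H$, and then rebuild a (possibly new) homotopy 4-ball $B'$ from the mapping torus $H\times_\Phi S^1$. The present paper flags exactly this point in the paragraph following the theorem: ``there is a homotopy 4-ball $B'$ (possibly different from $B$)\dots''. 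So your outline proves the theorem as stated, but you should drop the claim that the original disk complement is itself a surface bundle.
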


In this case, there is a homotopy 4-ball $B'$ (possibly different from $B$) containing a homotopy-ribbon disk $D$ such that $B' \setminus D = H \X_{\Phi} S^1$, so that each fiber $\wh F$ in the fibration of $\pd(B' \setminus D) = S^3_0(K) = \wh F \X_{\wh \varphi} S^1$ is capped off with a handlebody fiber of $H \X_{\Phi} S^1$.  For this reason, we say that the fibration of $S^3_0(K)$ \emph{extends over handlebodies}.

In the Sections~\ref{sec:extend} and~\ref{sec:alternate}, we discuss how a singular fibration can extend over handlebodies in order to prove an analogue of Casson-Gordon's theorem in the case that $K$ is not fibered.

\section{R-link derivatives}\label{sec:rlink}

In this section, we prove Theorem~\ref{hr}.  First, we offer a lemma connecting handle-ribbon knots and R-links.

\begin{lemma}\label{rhandle}
A knot $K \subset S^3$ is handle-ribbon in a homotopy 4-ball if and only if $K$ is a component of some R-link $J$.
\end{lemma}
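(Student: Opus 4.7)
The plan is to prove both directions by exploiting the equivalence between handle decompositions of $B \setminus D$ and of the homotopy 4-sphere $X = B \cup B^4$ obtained by capping $\pd B$, together with an elementary observation: in a homotopy 4-sphere $X$ with a decomposition $B^4 \cup (\text{2-handles}) \cup (\text{3-handles}) \cup B^4$ having no 1-handles, the attaching link of the 2-handles is automatically an R-link. Indeed, after attaching the 2-handles the boundary must be $\#^n(S^1 \X S^2)$ in order for the 3-handles and terminal 4-handle to cap it off; comparing $H_1(\#^n(S^1 \X S^2)) = \Z^n$ with the cokernel of the surgery matrix forces every framing and every pairwise linking number to vanish.

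For the forward direction I would begin with $K$ bounding a handle-ribbon disk $D \subset B$ and observe that $B = (B \setminus D) \cup \eta(D)$, where the 4-ball $\eta(D) \cong D \X D^2$ attaches to $B \setminus D$ along the solid torus $D \X \pd D^2 = \eta(K^*) \subset S^3_0(K) = \pd(B \setminus D)$. This attachment is exactly a single 2-handle $h^2_{K^*}$ along the dual knot $K^*$ with 0-framing, so $B$ itself has a handle decomposition with no 3-handles. Capping $\pd B$ with a 4-ball gives a homotopy 4-sphere $X$ with no 3-handles; dualizing yields a decomposition of $X$ with no 1-handles, so its 2-handle attaching link $J$ is an R-link by the observation above. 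To identify $K$ as a component of $J$, use the standard genus-one Heegaard splitting $\pd \eta(D) = S^3 = (\pd D \X D^2) \cup (D \X \pd D^2) = \eta(K) \cup \eta(K^*)$, where $\eta(K) \subset \pd B$ and $\eta(K^*) \subset \pd(B \setminus D)$. The belt circle of $h^2_{K^*}$ is the core of the first solid torus---namely $K$---with the Seifert framing, and the dual 2-handle is attached along this belt circle, confirming that $K$ appears as a 0-framed component of $J$.

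For the converse, I would take $D \subset B_J$ to be the core of the 2-handle $h^2_K$ attached along the component $K$ of $J$ and show $B_J \setminus D$ admits a handle decomposition without 3-handles. Identifying $\eta(D)$ with $h^2_K$ gives $B_J \setminus D = X_J \setminus (h^0 \cup h^2_K)$, which inherits from $X_J$'s natural decomposition a relative handle decomposition from its boundary $S^3_0(K) = \pd(h^0 \cup h^2_K)$ of the form $(S^3_0(K) \X I) \cup h^2(J \setminus K) \cup (\text{3-handles}) \cup h^4$---a relative decomposition with no 1-handles. Dualizing reverses the indices and produces a handle decomposition of $B_J \setminus D$ with only $0$-, $1$-, and $2$-handles, so $D$ is handle-ribbon.

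The main obstacle is the geometric identification in the forward direction of the dual of $h^2_{K^*}$ with a 2-handle attached along $K \subset \pd B$: this requires the Heegaard-splitting analysis of $\pd \eta(D) = S^3$ and a careful check that the induced framing on the belt circle $K$ matches the Seifert framing in $\pd B$. This geometric step is what ties the algebraic handle-counting argument to the concrete knot $K$.
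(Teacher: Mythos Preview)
Your argument is correct and, modulo dualization, takes the same approach as the paper. The paper's forward direction is slightly more direct: rather than building $B$ from the $0$-handle side, capping to a homotopy $4$-sphere, dualizing, and then identifying $K$ as the belt circle of $h^2_{K^*}$, it works from $\partial B = S^3$ outward---first attaching a $0$-framed $2$-handle along $K$ to produce the relative trace $B_0(K)$ with boundary $S^3_0(K)$, then appending the given no-$1$-handle relative decomposition of $B \setminus D$ (after isotoping its $2$-handle attaching link $J'$ off the dual $K^*$ into $S^3 \setminus K$)---so that $K$ is manifestly one of the $2$-handle attaching circles and $K \cup J'$ is the desired R-link without any belt-circle bookkeeping.
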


\begin{proof}
Suppose $K$ is handle-ribbon, and let $D$ be a handle-ribbon disk for $K$ in a homotopy 4-ball $B$.  Then $B \setminus D$ has a relative handle decomposition without 1-handles.  Let $J' \subset \pd (B \setminus D) = S^3_0(K)$ be the attaching link for the 2-handles.  After an isotopy, we can assume that $J' \subset S^3 \setminus K$, and thus we can attach a 0-framed 2-handle along $K$, followed by the relative handle decomposition of $B \setminus D$ to obtain a relative handle decomposition of $B$ without 1-handles such that $J' \cup K$ is the attaching link for the 2-handles.  Since $B$ is a homotopy 4-ball, this handle decomposition has $n$ 2-handles and $n$ 3-handles for some $n$, from which it follows that $J' \cup K$ is an R-link.

On the other hand, if $K$ is a component of an R-link $J$, then the homotopy 4-ball $B_J$ has a relative handle decomposition without 1-handles.  It follows that for a core $D$ of the 2-handle attached along $K$ in $B_J$, the complement $B_J \setminus D$ also has a relative handle decomposition without 1-handles, so that $D$ is a handle-ribbon disk for $K$ in $B_J$.
\end{proof}

Recall the definition of a derivative link from the introduction.  Let $K$ be a knot in $S^3$ with Seifert surface $F$, and suppose that $J \subset F$ is a link such that $\lk(J_i,J_j^+) = 0$ for all $i,j$ and no two components of $J$ are homotopic in $F$.  This definition mirrors the definition of a derivative link, except that we do not require $F \setminus J$ to be a connected planar surface.  The next lemma gives a procedure for converting $J$ to a derivative link in the case that $K \cup J$ is an R-link; thus we call such a link $J$ a \emph{partial derivative} of $K$.

\begin{proposition}\label{meat}
Suppose that $K$ has a partial derivative $J$ in $F$ such that $K \cup J$ is an R-link.  Then $F$ contains an R-link derivative $L$ for $K$, where $L$ is stably equivalent to $K \cup J$.
\end{proposition}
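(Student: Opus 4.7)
The plan is to extend $J$ to a derivative $L$ for $K$ in $F$ by adding simple closed curves while preserving both the R-link property of the union with $K$ and its stable equivalence class; once this is achieved, a final application of Lemma~\ref{kremove} to $K\cup L$ converts the problem to showing $L$ itself is an R-link.

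I would first dispose of linear dependence among the classes $[J_i]\in H_1(F)$, which can arise for instance when $|J|>g(F)$, since the partial derivative hypothesis only requires that no two components of $J$ be homotopic in $F$. Whenever $\sum n_i[J_i]=0$ in $H_1(F)$ for a non-trivial tuple, handleslides of the $J_i$ within $F$---realized as handleslides of $K\cup J$ in $S^3$ via Lemma~\ref{isotop}---produce a component of $J$ that bounds a subsurface of $F$; the corresponding 2-handle of $X_{K\cup J}$ is null-homologous with trivial Seifert framing, so cancels with a 3-handle and may be removed while preserving the R-link property and the stable equivalence class. After finitely many such reductions, I may assume the $[J_i]$ are linearly independent and $|J|\le g(F)$.

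I would then induct on the deficiency $d=g(F)-|J|$. The base case $d=0$ says $J$ is a cut system of $F$, hence already a derivative, and Lemma~\ref{kremove} finishes. For $d>0$, the key input from the R-link hypothesis is that in $S^3_{\vec 0}(K\cup J)\cong\#^{|J|+1}(S^1\X S^2)$, the closed surface $\wh F$ (obtained by capping $\partial F$ using the core of the $K$-handle) is surgered along $J$ to a closed surface $\wh F_1$ of genus $d$. Since the fundamental group of a closed surface of positive genus is not free while $\pi_1(\#^{|J|+1}(S^1\X S^2))$ is free, the inclusion $\pi_1(\wh F_1)\to\pi_1(\#^{|J|+1}(S^1\X S^2))$ has non-trivial kernel, and Dehn's lemma produces a compressing disk for $\wh F_1$ whose boundary $c$ bounds an embedded disk in the surgered manifold. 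Isotoping $c$ off the compressing disks for $J$ and off the $K$-capping disk---moves that correspond to handleslides in $S^3$ by Lemma~\ref{isotop}---places $c$ inside $F\setminus J$. I would then verify that adjoining $c$ (with possible band-sum corrections using meridians of $K\cup J$ to arrange $\lk(c,c^+)=0$ and $\lk(c,J_i^+)=0$) exhibits $K\cup J\cup c$ as obtained from $K\cup J$ by a canceling 2-3 handle pair, so $J\cup c$ is a partial derivative of strictly smaller deficiency that is stably equivalent to $J$. The induction terminates at a derivative $L\subset F$ with $K\cup L$ an R-link stably equivalent to $K\cup J$, and Lemma~\ref{kremove} completes the proof.

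The main obstacle is the framing subtlety in the inductive step: the compressing disk produced by Dehn's lemma bounds $c$ with a framing that may differ from the Seifert framing of $c$ in $F$. Correcting this framing---via band sums with meridians, Dehn twists within $F$, or preliminary handleslides---while simultaneously arranging $\lk(c,c^+)=\lk(c,J_i^+)=0$ is the delicate point; it requires an explicit analysis of the Seifert form of $F$ restricted to $F\setminus J$ and a careful choice of the compressing curve $c$ among the many provided by the loop theorem.
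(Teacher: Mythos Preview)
Your core idea---augmenting $J$ by compressing curves for the capped surface in $\#^n(S^1\times S^2)$ until the complement in $F$ becomes planar---is exactly the mechanism the paper uses. However, your organization introduces one genuine error and leaves unresolved a point that is in fact automatic.

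\textbf{The flawed reduction step.} Your preliminary move, handlesliding within $F$ to produce a component $J_i$ bounding a subsurface $F_0\subset F$ and then deleting it, does not work. Bounding a subsurface of $F$ only tells you $J_i$ is null-homologous in $S^3$ with surface framing zero; it does not make $J_i$ an unknot, and a $2$-handle attached along a null-homologous $0$-framed knot has no reason to cancel a $3$-handle. Removing such a component will in general destroy the R-link property. This step is also unnecessary: the paper avoids the issue entirely by taking a \emph{maximal} partial derivative $J'\subset F$ in the stable-equivalence class of $J$, arguing that $F\setminus J'$ must then be planar, and only at the end performing handleslides in $F$ to convert $J'$ into a genuine derivative $L$ together with a split unlink (which is absorbed by stable equivalence). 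No bound $|J|\le g(F)$ is ever needed.

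\textbf{The framing ``obstacle'' is not one.} The concern you flag---that the disk framing on the compressing curve $C$ might differ from its surface framing in $F$, and that the mixed linkings $\lk(C,J_i^+)$ might be nonzero---resolves itself. Since the compressing disk $D$ and the closed surface $\widehat F'$ meet only along $C$, the three curves $D\cap\partial\nu(C)$ and $\widehat F'\cap\partial\nu(C)$ are disjoint on the torus $\partial\nu(C)$, hence parallel; thus the $D$-framing equals the surface framing. Moreover, because $C$ bounds a disk in $M=\#^n(S^1\times S^2)$, the group $H_1(M\setminus C)$ is free abelian, while the presentation coming from $0$-surgery on $K\cup J'$ shows that $\lk(J'_i,C)\,\mu_C$ is a relation for each $i$; freeness forces $\lk(J'_i,C)=0$, and then $[C^+]=0$ in $H_1(M\setminus C)$ forces the framing integer to vanish. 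So $\lk(C,C^+)=0$ and $\lk(C,(J'_i)^+)=\lk(C,J'_i)=0$ automatically; no band-sum corrections or Dehn twists are needed. The paper simply asserts ``each component has surface framing zero,'' which is justified by this computation.

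In short, your inductive scheme can be repaired by replacing the induction on $g(F)-|J|$ with the paper's maximality argument (or, equivalently, an induction on the total genus of $\widehat F\setminus J$), and by recognizing that the framing and linking conditions on the new curve $C$ are forced rather than delicate.
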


\begin{proof}
Let $\mathcal{J}$ be the collection of partial derivatives $J' \subset F$ such that $K \cup J'$ is stably equivalent to $K \cup J$.  By assumption $\mathcal{J}$ is nonempty.  Let $J'$ be an element of $\mathcal{J}$ with the greatest number of link components.  We claim that $F \setminus J'$ is a union of planar surfaces.  If not, then $F \setminus J'$ has a component $F'$ with genus at least one and nonempty boundary.  In the 3-manifold $S^3_{\vec 0}(K \cup J') = \#^{|K\cup J'|}(S^1 \X S^2)$, the surface $F'$ can be capped off with copies of meridians $\{D_i\}$ of the surgery solid tori corresponding to $K \cup J'$ to get a closed surface $\wh F'$.  Since every surface of positive genus in $\#^{|K\cup J'|}(S^1 \X S^2)$ is compressible, there exists a compressing disk $D$ for $\wh F'$ with boundary $C$.

After isotopy, we may assume that $C$ is disjoint from the disks $\{D_i\}$ used to cap off $F'$, and thus $C \subset S^3 \setminus (K \cup J')$.  Let $J'' = J' \cup C$, where $C$ is framed by the disk $D$.  Since $C$ is isotopic to an unknotted curve in $S^3_{\vec 0}(K \cup J')$, it follows from Lemma~\ref{isotop} that $C$ can be handleslid over $K \cup J'$ in $S^3$ to become unknotted and unlinked, and thus $K \cup J''$ is stably equivalent to $K \cup J'$.  In addition, $J'' \subset F$, where each component has surface framing zero and $C$ is not homotopic to any component of $J'$, since $C$ is an essential curve in $F'$.  But this implies that $J'' \in \mathcal{J}$, so that $J'$ is not maximal, a contradiction.  We conclude that a maximal element $J'$ cuts $F$ into planar components (in fact, pairs of pants, although this observation is not necessary for the proof).

Finally, to obtain a derivative from $J'$, we can handleslide components of $J'$ over each other in $F$ to get the union of a link $L$ and a split unlink $U$, where $F \setminus L$ is connected and planar.  It follows that $K \cup L$ is an R-link, and by Lemma~\ref{kremove}, $L$ is an R-link derivative which is stably equivalent to $K \cup L$ and thus $K \cup J$, as desired.
\end{proof}

\begin{proof}[Proof of Theorem~\ref{hr}]
We prove the easier direction first.  Suppose $L$ is an R-link derivative for $K$ contained in a Seifert surface $F$ for $K$.  As in Lemma~\ref{kremove}, a sequence of handleslides in $F$ of $K$ over the components of $L$ converts $K$ into an unknotted, unlinked component $U$.  It follows that $L \cup U$ is an R-link, and thus $L \cup K$ is an R-link as well.  By Lemma~\ref{rhandle}, $K$ is handle-ribbon.

For the more difficult direction, suppose that $K$ is handle-ribbon, so that there is an R-link $K \cup J$ by Lemma~\ref{rhandle}.  Let $F'$ be any Seifert surface for $K$, chosen to meet $J$ minimally.  We claim that $F' \cap J = \emp$.  If not, then there is a component $J_i$ of $J$ such that $J_i \cap F' \neq \emp$, and using the fact that $\text{lk}(K,J_i) = 0$, we have that $J_i$ meets $F'$ in points of opposite orientation.  It follows that there is an arc $\alpha \subset J_i$ with both endpoints on the same side of $F'$ and interior disjoint from $F'$, so that the result $F''$ of tubing $F'$ along $\alpha$ is a Seifert surface for $K$ such that $|F'' \cap K| < |F' \cap K|$, a contradiction. (See Figure~\ref{fig:tubing}.)  We conclude that $F' \cap K = \emp$.

\begin{figure}[h!]
	\centering
\begingroup%
  \makeatletter%
  \providecommand\color[2][]{%
    \errmessage{(Inkscape) Color is used for the text in Inkscape, but the package 'color.sty' is not loaded}%
    \renewcommand\color[2][]{}%
  }%
  \providecommand\transparent[1]{%
    \errmessage{(Inkscape) Transparency is used (non-zero) for the text in Inkscape, but the package 'transparent.sty' is not loaded}%
    \renewcommand\transparent[1]{}%
  }%
  \providecommand\rotatebox[2]{#2}%
  \newcommand*\fsize{\dimexpr\f@size pt\relax}%
  \newcommand*\lineheight[1]{\fontsize{\fsize}{#1\fsize}\selectfont}%
  \ifx\svgwidth\undefined%
    \setlength{\unitlength}{181.99023389bp}%
    \ifx\svgscale\undefined%
      \relax%
    \else%
      \setlength{\unitlength}{\unitlength * \real{\svgscale}}%
    \fi%
  \else%
    \setlength{\unitlength}{\svgwidth}%
  \fi%
  \global\let\svgwidth\undefined%
  \global\let\svgscale\undefined%
  \makeatother%
  \begin{picture}(1,0.33659218)%
    \lineheight{1}%
    \setlength\tabcolsep{0pt}%
    \put(0,0){\includegraphics[width=\unitlength,page=1]{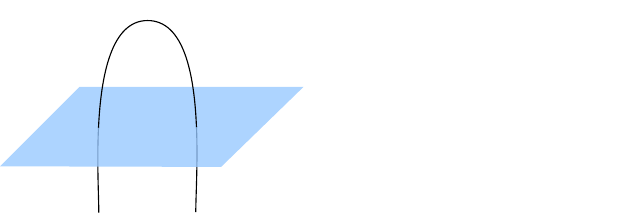}}%
    \put(0.1582459,0.30791231){\color[rgb]{0,0,0}\makebox(0,0)[lt]{\lineheight{1.25}\smash{\begin{tabular}[t]{l}$\alpha$\end{tabular}}}}%
    \put(0.03527171,0.01683846){\color[rgb]{0,0,1}\makebox(0,0)[lt]{\lineheight{1.25}\smash{\begin{tabular}[t]{l}$F'$\end{tabular}}}}%
    \put(0,0){\includegraphics[width=\unitlength,page=2]{tubing.pdf}}%
    \put(0.55481414,0.01683846){\color[rgb]{0,0,1}\makebox(0,0)[lt]{\lineheight{1.25}\smash{\begin{tabular}[t]{l}$F''$\end{tabular}}}}%
    \put(0,0){\includegraphics[width=\unitlength,page=3]{tubing.pdf}}%
    \put(0.48966719,0.17731388){\color[rgb]{0,0,0}\makebox(0,0)[lt]{\lineheight{1.25}\smash{\begin{tabular}[t]{l}tube\end{tabular}}}}%
  \end{picture}%
\endgroup%

\caption{Since the algebraic intersection number of $F'$ with each component $J_i$ of $J$ is zero, we can add a tube to $F'$ to find a surface $F''$ intersecting $J$ in two fewer points.}
	\label{fig:tubing}
\end{figure}

For each component $J_i$ of $J$, let $T_i$ be an embedded torus containing $J_i$ with surface framing equal to the zero framing, and such that the $T_i \cap F' = \emp$ and the tori $\{T_i\}$ are pairwise disjoint.  Then, we can tube the tori $\{T_i\}$ to $F'$, yielding a Seifert surface $F$ for $K$ such that the R-link $J$ is contained in $F$ such that each component has surface framing zero.  It follows that $J$ is a partial derivative for $K$, and thus by Proposition~\ref{meat}, $K$ has an R-link derivative.
\end{proof}

In Theorem 1.4 of~\cite{FHRkS}, Jeffrey Meier and the second author proved that if $K \cup J$ is a 2-component R-link and $K$ is fibered, then $K \cup J$ is stably equivalent to $K \cup L$, where $L$ is an R-link derivative contained in a fiber surface for $K$, from which it follows that the closed monodromy for $K$ extends over the handlebody determined by $L$.  Our work provides a shorter and far less technical proof of that fact:

\begin{theorem}\label{withJeff}
If $K \cup J$ is an R-link such that $K$ and $J$ are knots and $K$ is fibered, then $K \cup J$ is stably equivalent to $K \cup L$, where $L$ is an R-link derivative for $K$ contained in a fiber surface $F$.  In this case, the closed monodromy for $K$ extends over the handlebody determined by $L$.
\end{theorem}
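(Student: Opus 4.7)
Since $K\cup J$ is an R-link, Lemma~\ref{rhandle} shows $K$ is handle-ribbon, hence homotopy-ribbon; together with the fibered hypothesis, Casson-Gordon's Theorem~\ref{cassongordon} provides a handlebody $H$ with $\pd H = \wh F$ and a diffeomorphism $\Phi\colon H\to H$ extending $\wh\varphi$, along with a homotopy 4-ball $B'$ containing a homotopy-ribbon disk $D'$ for $K$ satisfying $B' \setminus D' = H\times_\Phi S^1$. My strategy is to extract the derivative $L$ from a cut system of $H$, verify the derivative conditions using the twisted product geometry, and obtain the stable equivalence by comparing two handle decompositions of $B'$.

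First I fix a cut system $\Delta_1,\ldots,\Delta_g$ of compressing disks for $H$ and set $L_i = \pd\Delta_i\subset\wh F$. After a small isotopy off the capping disk for $K$, the collection $L = L_1 \cup \cdots \cup L_g$ lies in $F$. Because cutting $\wh F$ along $L$ yields $\pd\bigl(H\setminus\bigcup\Delta_i\bigr) = S^2$ with $2g$ open disks removed, $F\setminus L$ is connected and planar. For the vanishing linking condition $\lk(L_i, L_j^+) = 0$, I work in $H\times_\Phi S^1$: the parallel copies $\Delta_i\times\{0\}$ and $\Delta_j\times\{\eps\}$ are disjoint disks bounding $L_i$ and the $F$-pushoff $L_j^+$ in $S^3_0(K)$ with framings matching those induced from $F$. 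Extending these disks through the 2-handle along $K^*$ into the homotopy 4-ball $B'$, a Seifert-surface computation analogous to the framing lemma in~\cite{CG} should then force the $S^3$-linking numbers to vanish.

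To show $K\cup L$ is an R-link, I next use the natural handle decomposition of $H\times_\Phi S^1$ coming from its product structure (one 0-handle, $g{+}1$ 1-handles, $g$ 2-handles); adjoining the 2-handle along $K^*$ and dualizing produces a handle decomposition of $B'$ with no 1-handles and 2-handle attaching link $K\cup L$, exhibiting $K\cup L$ as an R-link with $B_{K\cup L} = B'$, and Lemma~\ref{kremove} then gives that $L$ itself is an R-link stably equivalent to $K\cup L$. The stable equivalence $K\cup J\sim K\cup L$ I plan to obtain by comparing the two no-1-handle handle decompositions of $B'$ (both containing the same 2-handle along $K$) via handleslides and canceling 2/3-handle pairs. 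The final claim of the theorem is then immediate: the handlebody determined by $L$ is precisely $H$, over which $\wh\varphi$ extends by construction.

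\textbf{Main obstacle.} The hardest step is the linking computation $\lk(L_i, L_j^+) = 0$, which relies on matching the $\wh F$-framings from the handlebody extension with the 0-framings in $S^3$ and exploiting the fiberwise disjointness of the $\Delta_i$'s inside $H\times_\Phi S^1$; this is essentially a Casson-Gordon-type framing argument made relative to the homotopy 4-ball $B'$. The stable-equivalence step also warrants care, since two R-link decompositions of a common homotopy 4-ball are not a priori stably equivalent in general, and I expect to exploit the fact that both of our decompositions share the distinguished 2-handle along $K$ and arise from the single handlebody extension $H$.
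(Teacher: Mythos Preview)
Your approach recovers most of the theorem but has a genuine gap at the stable equivalence $K\cup J \sim K\cup L$. Once you invoke Casson--Gordon, the handlebody $H$ (and hence your link $L=\pd\Delta_1\cup\cdots\cup\pd\Delta_g$) is produced abstractly from the fact that $K$ is homotopy-ribbon; the original curve $J$ plays no further role. Your proposed fix---comparing two no-1-handle decompositions of ``the same'' homotopy 4-ball---fails for two reasons. First, Casson--Gordon's theorem only asserts that $K$ is homotopy-ribbon in \emph{some} homotopy 4-ball $B'$, possibly different from $B_{K\cup J}$; nothing in your argument pins down $B' \cong B_{K\cup J}$. Second, and more seriously, even granting $B_{K\cup J}\cong B_{K\cup L}$, two R-links yielding diffeomorphic homotopy 4-balls are \emph{not} known to be stably equivalent: that is precisely the stable Generalized Property R Conjecture. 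Cerf theory relates the two handle decompositions by moves that in general include births of 1--2 pairs, which do not correspond to R-link stabilizations. Sharing the 2-handle along $K$ does not circumvent this.

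The paper avoids this trap by never letting go of $J$. It uses the Scharlemann--Thompson leveling theorem to isotope $J$ in $S^3_0(K)$ into the closed fiber $\wh F$ with surface framing zero; by Lemma~\ref{isotop} this isotopy is realized by handleslides over $K$, so $K\cup J$ is handleslide equivalent to $K\cup J'$ with $J'\subset F$ a partial derivative. Then Proposition~\ref{meat} (which repeatedly compresses $\wh F\setminus J'$ inside $\#^{|K\cup J'|}(S^1\times S^2)$, each compression adding a curve that is stably trivial by Lemma~\ref{isotop}) extends $J'$ to a full derivative $L$ while tracking stable equivalence at every step. The monodromy extension is then argued separately from the explicit handle decomposition of $B_{K\cup L}$. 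Your steps showing that the cut-system boundaries form a derivative with the right framings, and that $K\cup L$ is an R-link via the handle decomposition of $H\times_\Phi S^1$, are essentially correct and morally dual to the paper's second paragraph; but without a mechanism like Scharlemann--Thompson plus Proposition~\ref{meat} to tie $L$ back to $J$, the stable-equivalence claim is unproven.
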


\begin{proof}
Let $\wh F$ denote the closed fiber for the fibered 3-manifold $S^3_0(K)$.  As in~\cite{FHRkS} (which in turn adapts work in~\cite{GST}), we use a result of Scharlemann-Thompson~\cite{STLevel} to conclude that $J$ is isotopic into $\wh F$ with surface framing equal to the zero framing.  Thus, $J$ is a partial derivative for $K$, and by Proposition~\ref{meat}, there exists an R-link derivative $L \subset F$ such that $L$ is an R-link.

To see that the monodromy $\wh \varphi$ for $K$ extends as desired, let $g = |L|$ and note that $B_{K \cup L}$ has a relative handle decomposition with $(g+1)$ 2-handles, $(g+1)$ 3-handles, and a 4-handle.  To build $B_{K \cup L}$, start with relative 0-trace $B_0(K)$, the union of $I \X S^3$ and a 2-handle attached along $K$.  Next, let $H$ denote the abstract handlebody determined by $L$, and attach a copy of $H \X I$ to a collar neighborhood of $L \X I \subset \wh F \X I \subset S^3_0(K)$.  This attachment amounts to gluing $g$ 2-handles along $L$ followed by a single 3-handle.  Since the relative handle decomposition contains $g$ more 3-handles and a 4-handle, it follows that the resulting boundary component is $\#^g(S^1 \X S^2)$.  By the construction, this boundary is diffeomorphic to the union of $H \X \{0\}$, $H \X \{1\}$, and a product region $\wh F \X I$, since $S^3_0(K)$ is fibered.  Thus, we can cap off the boundary with another copy of $H \X I$ to obtain $B_{K \cup L}$.  In the process, we see that we have constructed $B_{K \cup L}$ by capping off each $\wh F$ fiber of $S^3_0(K)$ with a copy of $H$ determined by $L$, and we conclude that $\wh \varphi$ extends over handlebodies.
\end{proof}

\begin{remark}
The interested reader should note that embedded in the highly technical analysis contained in Section 4 of~\cite{FHRkS} are more fine-tuned statements about the closed monodromy $\wh \varphi$, including the assertion that the leveled curve $J$ is necessarily non-separating in $\wh F$ and restrictions on the iterated images of $J$ under $\wh \varphi$.  Our proof here does not recover these details, even though it suffices to give an alternate proof of the theorem.
\end{remark}

In the next two sections, we explore a more complicated version of the capping off in the proof of Theorem~\ref{withJeff} that occurs when we assume $S^3_0(K)$ is non-fibered, culminating in a Casson-Gordon-like theorem in the more general case.

\section{Singular fibrations, handlebody extensions, and generic maps}\label{sec:extend}

One goal of this paper is to extend Casson and Gordon's work to non-fibered knots.  For an arbitrary knot $K \subset S^3$, the exterior $S^3 \setminus K$ does not a priori admit a fibration; however, it does always admit a \emph{singular} fibration, on which we will focus in this section and the next.

For our purposes, a \emph{singular fibration} $p:Y \rightarrow S^1$ is a circle-valued Morse function such that fibers are connected.  In this case, regular fibers of $p$ are closed surfaces, and singular fibers are singular surfaces.  In addition, $p$ has the same number of index one and index two critical points (and no critical points of index zero or index three). If there exists a pair of regular fibers in $Y$ separating the set of index one critical points and the set of index two critical points, the singular fibration is called \emph{self-indexing}.  These two regular surfaces have minimal and maximal genera among regular surfaces, and thus they are called \emph{thin} and \emph{thick} surfaces, respectively.  A self-indexing singular fibration is an example of a circular Heegaard splitting (see~\cite{circle}).  It is well-known that any singular fibration can be homotoped to a self-indexing one.

Singular fibrations are used to define the \emph{Morse-Novikov} number of a knot: $MN(K)=\min\{$number of critical points in a singular fibration of $S^3\setminus K\}$. Clearly, a knot $K$ is fibered if and only if $MN(K)=0$; Goda~\cite{goda} has computed $MN(K)$ for all prime $K$ through ten crossings.

To prove the results in this section, we employ tools coming from \emph{generic maps} or \emph{Morse-2 functions} on 4-manifolds, a natural analogue of Morse functions.   A generic map (or Morse-2 function~\cite{GKM2}) is a smooth map $f:X \rightarrow \Sigma$, where $\Sigma$ is a compact surface, and $f$ is a submersion away from an embedded 1-manifold $Z_f \subset X$ (called the \emph{singular locus} of $f$).  In addition, for each point $y \in Z_f$, there are local coordinates $(t,x_1,x_2,x_3)$ about $y$ and $(u,v)$ about $f(y)$ such that $f(t,x_1,x_2,x_3) = (t,\pm x_1^2 \pm x_2^2 \pm x_3^2)$ or $f(t,x_1,x_2,x_3) = (t,x_1^3 + tx_1 \pm x_2^2 \pm x_3^2)$.  In the first case, $y$ is a called a \emph{fold point}, and in the second case, $y$ is called a \emph{cusp point}.  The singular locus is well-understood; it consists of fold circles and components containing fold arcs and isolated cusp points.  If $X$ has nonempty boundary, we require that $f$ map $\pd X$ into $\pd \Sigma$, where $f|_{\pd X}$ is a circle-valued Morse function.  See the comprehensive discussion in~\cite{BS} for further details.

Fold arcs are either \emph{definite} if all the signs agree or \emph{indefinite} if the signs differ; a fold arc does not have a well-defined index since it can be parametrized in two different ways.  An important observation is that in a neighborhood of a fold arc, $X$ is locally diffeomorphic to $Y \X I$ for a compression-body $Y$ (in the indefinite case) or a 3-ball $Y$ (in the definite case), and the function $f$ is given in local coordinates by $p \X \text{id}$, where $p:Y \rightarrow I$ is the natural Morse function with a single critical point.  Thus, for any 3-manifold $Y$ and Morse function $p:Y \rightarrow I$, there is a generic map $p \X \text{Id}:Y \X I \rightarrow I \X I$.  As in~\cite{BS}, we label images of fold arcs in the immersed 1-manifold $f(Z_f)$, called the \emph{base diagram}, with arrows, where the arrows point in the direction of 3-dimensional 2- or 3-handle attachment.

In what follows, we parametrize $S^1$ as $\R/2\pi \Z$, with $\theta \in \R/2\pi \Z$ corresponding to $e^{i\theta} \in S^1 \subset \C$.   In general, we assume that for a self-indexing singular fibration $p:Y \rightarrow S^1$, the thin surface is isotopic to $p^{-1}(0)$ and the thick surface is isotopic to $p^{-1}(\pi)$.  We say that a self-indexing singular fibration $p:Y \rightarrow S^1$ {\emph{extends over handlebodies to a generic map}} if there exists a compact 4-manifold $X$ with $\pd X = Y$ and a generic map $P:X \rightarrow S^1 \X I$ with the following properties:
\be
\item $P(\pd X) = S^1 \X \{0\}$ and $P|_{\pd X} = p$.
\item $P$ has a single definite fold circle whose image is $S^1 \X \{1\}$.
\item The directed segments $\{\theta\} \X I$ agree with the arrows of the base diagram $P(Z_P)$ (such a map is called \emph{directed}) and meet $P(Z_P)$ transversely except at singularities of $p$ and at cusp points.
\item The cusp points of $P(Z_P)$ are contained in $\{\pi\} \X I$, and the number cusp points equals the number of index one or index two critical points of $p$.
\ee

Condition (1) confirms that $P$ is an extension of $p$.  Condition (2) is necessary to ensure that the regular fibers of $P$ are connected surfaces.  Conditions (3) and (4) are necessary for a more complicated reason:  If $K$ is a knot in $S^3$ and $S^3_0(K)$ has a singular fibration $p$ that extends over handlebodies to $P:X \rightarrow S^1 \X I$, then there is a compact 4-manifold $B_P(K)$ obtained by gluing $X$ to the relative trace $B_0(K)$ along their common boundary component $S^3_0(K)$ via the identity map.  Conditions (3) and (4) guarantee that the 4-manifold $B_P(K)$ is a homotopy 4-ball.  Finally, it follows from condition (3) that away from cusp points and singularities of $p$, the inverse image $P^{-1}(\{\theta\} \X I)$ is a handlebody with boundary $p^{-1}(\theta)$, justifying the assertion that $P$ extends $p$ over handlebodies.  If $P_1$ is the projection of $P$ onto the $S^1$ coordinate, it also follows that $P_1:X \rightarrow S^1$ is a circular Morse function with both boundary critical points (coinciding with critical points of $p$) and interior critical points (coinciding with cusp points), and thus $P_1$ is an example of the type of extension defined in Section~\ref{sec:alternate}.  A good treatment of Morse theory on manifolds with boundary is contained in~\cite{BNR}.  If a singular fibration $p:S^3_0(K) \rightarrow S^1$ extends over handlebodies to $P:X \rightarrow S^1 \X I$, the base diagram $P(Z_P)$ must fit into the template shown in Figure~\ref{extension}.

\begin{figure}[h!]
	\centering
\begingroup%
  \makeatletter%
  \providecommand\color[2][]{%
    \errmessage{(Inkscape) Color is used for the text in Inkscape, but the package 'color.sty' is not loaded}%
    \renewcommand\color[2][]{}%
  }%
  \providecommand\transparent[1]{%
    \errmessage{(Inkscape) Transparency is used (non-zero) for the text in Inkscape, but the package 'transparent.sty' is not loaded}%
    \renewcommand\transparent[1]{}%
  }%
  \providecommand\rotatebox[2]{#2}%
  \newcommand*\fsize{\dimexpr\f@size pt\relax}%
  \newcommand*\lineheight[1]{\fontsize{\fsize}{#1\fsize}\selectfont}%
  \ifx\svgwidth\undefined%
    \setlength{\unitlength}{249.24756148bp}%
    \ifx\svgscale\undefined%
      \relax%
    \else%
      \setlength{\unitlength}{\unitlength * \real{\svgscale}}%
    \fi%
  \else%
    \setlength{\unitlength}{\svgwidth}%
  \fi%
  \global\let\svgwidth\undefined%
  \global\let\svgscale\undefined%
  \makeatother%
  \begin{picture}(1,1)%
    \lineheight{1}%
    \setlength\tabcolsep{0pt}%
    \put(0,0){\includegraphics[width=\unitlength,page=1]{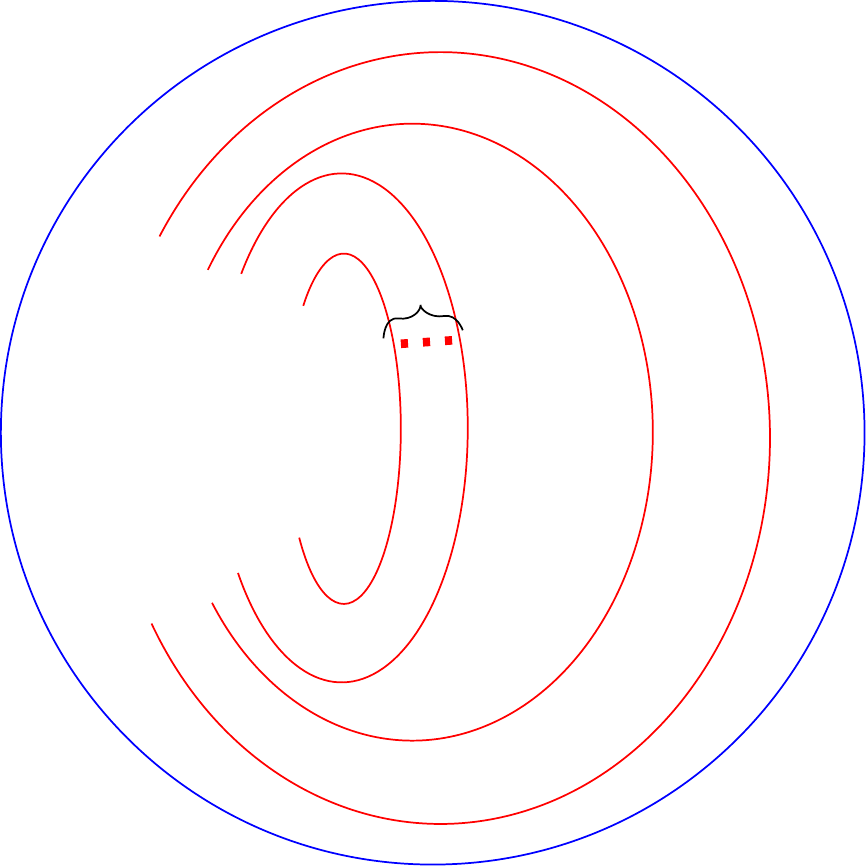}}%
    \put(0.4700389,0.65426289){\color[rgb]{0,0,0}\makebox(0,0)[lt]{\lineheight{1.25}\smash{\begin{tabular}[t]{l}$n$\end{tabular}}}}%
    \put(0,0){\includegraphics[width=\unitlength,page=2]{extension_new.pdf}}%
    \put(0.09947563,0.56784985){\color[rgb]{0,0,0}\rotatebox{-10.677957}{\makebox(0,0)[lt]{\lineheight{1.25}\smash{\begin{tabular}[t]{l}Many arcs,\end{tabular}}}}}%
    \put(0,0){\includegraphics[width=\unitlength,page=3]{extension_new.pdf}}%
    \put(0.08970365,0.38512571){\color[rgb]{0,0,0}\rotatebox{14.146177}{\makebox(0,0)[lt]{\lineheight{1.25}\smash{\begin{tabular}[t]{l}$n$ with cusps\end{tabular}}}}}%
    \put(0.2158647,0.29167981){\color[rgb]{0,0,0}\rotatebox{25.961961}{\makebox(0,0)[lt]{\lineheight{1.25}\smash{\begin{tabular}[t]{l}braid\end{tabular}}}}}%
    \put(0.19824885,0.67926967){\color[rgb]{0,0,0}\rotatebox{-26.124191}{\makebox(0,0)[lt]{\lineheight{1.25}\smash{\begin{tabular}[t]{l}braid\end{tabular}}}}}%
  \end{picture}%
\endgroup%

\caption{The base diagram of an extension $P:X \rightarrow S^1 \X I$}
	\label{extension}
\end{figure}

We also remark that in the case that $p$ is an honest fibration (so that $p$ has no critical points), then Casson-Gordon's Theorem~\ref{cassongordon} implies that the closed monodromy $\wh\varphi$ admits a handlebody extension $\Phi:H \rightarrow H$, and it is straightforward to construct a Morse-2 function $P:H \X_{\Phi} S^1 \rightarrow S^1 \X I$ with the desired properties such that $P_1$ is the natural projection to $S^1$, and the second coordinate function $P_2$ restricts to a standard Morse function on each handlebody fiber.  Thus, our definition coincides with Casson and Gordon's classification when $p$ is a fibration.

Next, we connect extensions over handlebodies to ideas about R-links discussed above.

\begin{lemma}\label{reverse_direction}
Suppose that $K \subset S^3$ admits a self-indexing singular fibration $p:S^3_0(K) \rightarrow S^1$ that extends over handlebodies to $P:X \rightarrow S^1 \X I$.  Then $K$ has an R-link derivative $L$ such that $B_L = B_P(K)$.
\end{lemma}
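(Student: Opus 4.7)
The plan is to mimic the fibered-case construction of Theorem~\ref{withJeff}, using the handlebody $H$ that $P$ provides to build a candidate cut-system link $L$ on a Seifert surface for $K$, to verify the derivative conditions via a four-dimensional argument inside the homotopy 4-ball $B_P(K)$, and then to match handle decompositions to identify $B_{K \cup L}$ with $B_P(K)$ before applying Lemma~\ref{kremove}. First I would pick a regular value $\theta_0 \in (0,\pi)$ close to $0$, so that by the self-indexing hypothesis $F := p^{-1}(\theta_0)$ is isotopic to the thin fiber of $p$; by conditions (3)--(4) and our choice of $\theta_0$ (avoiding the singularities of $p$ on $S^1 \X \{0\}$ and the cusps of $P$ in $\{\pi\} \X I$), the segment $\{\theta_0\} \X I$ meets the base diagram $P(Z_P)$ transversely, so $H := P^{-1}(\{\theta_0\} \X I) \subset X$ is a handlebody of genus $g := g(F)$ with $\pd H = F$. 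Since $p$ represents a generator of $H^1(S^3_0(K);\Z)$ Poincar\'e dual to $[K^*]$, I may isotope $F$ so that $F \cap K^*$ is a single transverse point, making $\Sigma := F \cap (S^3 \setminus \eta(K))$ a Seifert surface for $K$ of genus $g$. Choose a cut system of compressing disks $D_1, \ldots, D_g \subset H$ and, after a small isotopy in $F$ pushing the boundary curves $\ell_i := \pd D_i$ off the point $F \cap K^*$, set $L := \ell_1 \cup \cdots \cup \ell_g \subset \Sigma \subset S^3$.

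To check that $L$ is a derivative, observe that $\Sigma \setminus L$ is planar and connected: $F \setminus L$ is a $2g$-punctured sphere (since $L$ is a cut system), and $\Sigma$ differs from $F$ by removing one additional disk disjoint from $L$. For the linking conditions $\lk(L_i, L_j^+)=0$, take $L_j^+ := \pd D_j^+$ for $D_j^+$ a small parallel pushoff of $D_j$ inside $H$; then the disks $D_1, \ldots, D_g, D_1^+, \ldots, D_g^+$ are pairwise disjoint in $H \subset X \subset B_P(K)$. Because $B_P(K)$ is a homotopy 4-ball---so $H_2(B_P(K)) = 0$ and $\pd B_P(K) = S^3$---linking numbers in $S^3$ equal intersection numbers of any bounding surfaces in $B_P(K)$, giving $\lk(L_i, L_j^+) = D_i \cdot D_j^+ = 0$.

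The heart of the proof is then to show $K \cup L$ is an R-link with $B_{K \cup L} = B_P(K)$; after this, Lemma~\ref{kremove} yields $L$ as an R-link derivative stably equivalent to $K \cup L$, with $B_L = B_{K \cup L} = B_P(K)$. The strategy is to assemble $B_P(K)$ in stages matching the R-link recipe: begin with $B^4$, attach a $0$-framed $2$-handle along $K$ to obtain $B_0(K)$ with $\pd B_0(K) = S^3_0(K)$, and then attach a tubular neighborhood $N(H) \cong H \X I$ of $H$ in $X$ along a collar of $F \subset S^3_0(K)$. Using the 3-dimensional handle decomposition of $H$ (one $0$-handle and $g$ $1$-handles with cocores $D_1, \ldots, D_g$), this four-dimensional attachment corresponds to attaching $g$ $2$-handles along $L$ with their disk framings---equal to the $0$-framing in $S^3$ by the previous paragraph---followed by a single $3$-handle. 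The complementary piece $X \setminus \Int(N(H)) = P^{-1}((S^1 \setminus (\theta_0 - \eps, \theta_0 + \eps)) \X I)$ must then contribute only handles of index at least $3$---specifically, enough $3$-handles and a single $4$-handle to complete $B_P(K)$.

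The main obstacle is this last bookkeeping step. Conditions (3)--(4) constrain $P$ so that each critical point of $p$ on $\pd X$ is paired, via the emanating fold arc, with a cusp in $\{\pi\} \X I$, and the definite fold circle caps off the ``top'' of each handlebody fiber. Organizing this data into a relative handle decomposition of $X \setminus \Int(N(H))$ with no extra $1$- or $2$-handles requires a careful Morse-2 analysis along the lines of~\cite{BS}; once executed, it confirms the identification $B_P(K) = B_{K \cup L}$ and completes the proof.
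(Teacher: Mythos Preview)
Your approach is precisely the paper's: take the handlebody $H = P^{-1}(\{\theta_0\}\times I)$ over the thin fiber, let $L$ be a cut system for $H$ pushed off $K^*$, and recognize $B_0(K)\cup (H\times I)$ as $(k+1)$ 2-handles along $K\cup L$ plus one 3-handle.  Your verification of the linking conditions via disjoint disks in the homotopy 4-ball is in fact more explicit than what the paper writes down.

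That said, the step you label ``the main obstacle'' and leave unexecuted is exactly the content of the paper's proof; everything before it is setup.  The paper does not attempt a general Morse-2 analysis of the complement but instead exploits the specific shape of the base diagram.  Writing $X_1 = P^{-1}\bigl([\eps,2\pi-\eps]\times[\delta,1]\bigr)$ for the complementary piece, one splits
\[
X_- = P^{-1}\bigl([\eps,\pi-\eps]\times[\delta,1]\bigr),\quad
X_\pi = P^{-1}\bigl([\pi-\eps,\pi+\eps]\times[\delta,1]\bigr),\quad
X_+ = P^{-1}\bigl([\pi+\eps,2\pi-\eps]\times[\delta,1]\bigr).
\]
Condition~(3) (that $P$ is \emph{directed}) forces each $X_\pm$ to be a product $H_\pm\times I$ over the thick handlebody, so $X_1$ deformation retracts onto $X_\pi$.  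One then reads off the handles of $X_\pi$ directly from the local fold models: the definite fold contributes a 0-handle, each indefinite fold arc \emph{without} a cusp contributes a 1-handle, and each indefinite fold arc \emph{with} a cusp contributes nothing.  Here your description is slightly off: the fold arcs emanating from boundary critical points of $p$ are the ones carrying cusps, but there are also $k$ cusp-free indefinite fold circles (encircling the annulus, not touching $\partial X$) coming from the genus $k$ of the thin fiber.  These $k$ circles give the 1-handles, so $X_\pi\cong\natural^k(S^1\times B^3)$, and the handle count $(k+1)$ 2-handles, $(k+1)$ 3-handles, one 4-handle for $B_P(K)$ follows.
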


\begin{proof}
Let $\wh F \subset S^3_0(K)$ be the thin surface $p^{-1}(0)$ corresponding to the singular fibration $p$, with $k$ the genus of $\wh F$, and let $H = P^{-1}(\{0\} \X I)$ be the handlebody in $X$ whose boundary is $\wh F$.  The dual $K^* \in S^3_0(K)$ meets $\wh F$ in a single point, so that $F = \wh F \setminus K^*$ is a Seifert surface for $K$ in $S^3$.  Choose a link $L \subset  \wh F \setminus K^*$ such that $L$ is a $k$-component link framed by $F$ and bounding a collection of disks in $H$ cutting $H$ into a 3-ball.  We claim that $L$ is an R-link derivative for $K$.

We use a collar neighborhood $I \X H \subset X$ to begin building a relative handle decomposition for $B_P(K)$.  Observe that $X$ contains the union of $S^3_0(K) \X I$ along with $k$ 2-handles attached to $L$ with the surface framing of $L$ in $F$.  In addition, the cores of these 2-handles cut $I \X H$ into $I \X B^3$ which can be capped over with a 3-handle.  Let $X_0 \subset X$ be the union of $S^3_0(K) \X I$, these $k$ 2-handles, and the 3-handle.  Then $B_0(K) \cup X_0$ can be built with $(k+1)$ 2-handles attached along $K \cup L \subset S^3$ and a 3-handle.  Thus, to prove that $K \cup L$ (and thus $L$) is an R-link, it suffices to show that $X_1 = X \setminus X_0$ is a 1-handlebody built from a 0-handle and $k$ 1-handles.

For small $\delta,\eps > 0$, we may assume that $S^3_0(K) \X I$ is given by $P^{-1}(S^1 \X [0,\delta])$, and the collar neighborhood $I \X H \subset X$ is given by $P^{-1}([-\eps,\eps] \X [\delta,1])$, so that $X_0 = P^{-1}(S^1 \X [0,\delta]) \cup P^{-1}([-\eps,\eps] \X [\delta,1])$.  It follows that $X_1 = P^{-1}([\eps,2\pi-\eps] \X [\delta,1])$.  Decompose $X_1$ into
\begin{eqnarray*}
X_- &=& P^{-1}([\eps,\pi-\eps] \X [\delta,1]) \\
X_{\pi} &=& P^{-1}([\pi-\eps,\pi+\eps] \X [\delta,1]) \\
X_+ &=& P^{-1}([\pi+\eps,2\pi-\eps] \X [\delta,1]).
\end{eqnarray*}

Suppose $g$ is the genus of the thick surface of $p$, and let $H_{\pm}$ be the genus $g$ handlebodies given by $P^{-1}(\{\pi \pm \eps\} \X I)$.  Then, since $P$ is a directed generic map (whose base diagram is divided as shown in Figure~\ref{sections}), it follows that $X_{\pm}$ is diffeomorphic to $H_{\pm} \X I$, so that $X_1$ deformation retracts to $X_{\pi}$.  Using the base diagram for $X_{\pi}$, a small neighborhood of the definite fold contributes a 0-handle to $X_{\pi}$, a small neighborhood of an indefinite fold without a cusp contributes a 1-handle to $X_{\pi}$, and a small neighborhood of an indefinite fold with a cusp does not change the topology of $X_{\pi}$.  Thus, $X_{\pi}$ is a 1-handlebody built from a 0-handle and $k$ 1-handles, where $k$ is the number of indefinite folds without cusps, completing the proof.
\end{proof}

\begin{figure}[h!]
	\centering
\begingroup%
  \makeatletter%
  \providecommand\color[2][]{%
    \errmessage{(Inkscape) Color is used for the text in Inkscape, but the package 'color.sty' is not loaded}%
    \renewcommand\color[2][]{}%
  }%
  \providecommand\transparent[1]{%
    \errmessage{(Inkscape) Transparency is used (non-zero) for the text in Inkscape, but the package 'transparent.sty' is not loaded}%
    \renewcommand\transparent[1]{}%
  }%
  \providecommand\rotatebox[2]{#2}%
  \newcommand*\fsize{\dimexpr\f@size pt\relax}%
  \newcommand*\lineheight[1]{\fontsize{\fsize}{#1\fsize}\selectfont}%
  \ifx\svgwidth\undefined%
    \setlength{\unitlength}{226.5575499bp}%
    \ifx\svgscale\undefined%
      \relax%
    \else%
      \setlength{\unitlength}{\unitlength * \real{\svgscale}}%
    \fi%
  \else%
    \setlength{\unitlength}{\svgwidth}%
  \fi%
  \global\let\svgwidth\undefined%
  \global\let\svgscale\undefined%
  \makeatother%
  \begin{picture}(1,0.88539938)%
    \lineheight{1}%
    \setlength\tabcolsep{0pt}%
    \put(0,0){\includegraphics[width=\unitlength,page=1]{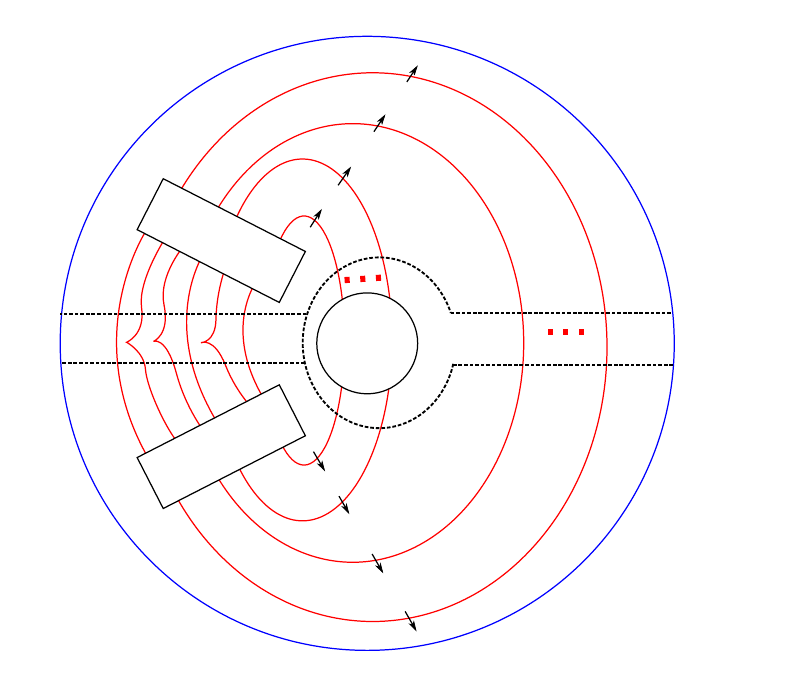}}%
    \put(0.86388334,0.43977726){\color[rgb]{0,0,0}\makebox(0,0)[lt]{\lineheight{1.25}\smash{\begin{tabular}[t]{l}$X_0$\end{tabular}}}}%
    \put(-0.00375099,0.43977726){\color[rgb]{0,0,0}\makebox(0,0)[lt]{\lineheight{1.25}\smash{\begin{tabular}[t]{l}$X_\pi$\end{tabular}}}}%
    \put(0.44786461,0.85106683){\color[rgb]{0,0,0}\makebox(0,0)[lt]{\lineheight{1.25}\smash{\begin{tabular}[t]{l}$X_-$\end{tabular}}}}%
    \put(0.44786461,0.01065721){\color[rgb]{0,0,0}\makebox(0,0)[lt]{\lineheight{1.25}\smash{\begin{tabular}[t]{l}$X_+$\end{tabular}}}}%
    \put(0.21555816,0.59587067){\color[rgb]{0,0,0}\rotatebox{-27.531285}{\makebox(0,0)[lt]{\lineheight{1.25}\smash{\begin{tabular}[t]{l}braid\end{tabular}}}}}%
    \put(0.23002948,0.27115881){\color[rgb]{0,0,0}\rotatebox{27.367076}{\makebox(0,0)[lt]{\lineheight{1.25}\smash{\begin{tabular}[t]{l}braid\end{tabular}}}}}%
  \end{picture}%
\endgroup%

\caption{The base diagram of $P$ divided into regions lifting to $X_0$, $X_{\pi}$, and $X_{\pm}$.}
	\label{sections}
\end{figure}

This proposition establishes the first half of the equivalence offered in Theorem~\ref{fibration1} below.  To prove the other direction, we require the power afforded to us by Waldhausen's Theorem.  See~\cite{schleimer} for further details about the theorem.

\begin{theorem}\label{wald}\cite{waldhausen}
For every $g,k$ with $g \geq k \geq 0$, the 3-manifold $\#^k(S^1 \X S^2)$ has a unique genus $g$ Heegaard splitting up to isotopy.
\end{theorem}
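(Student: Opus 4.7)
The plan is to prove the theorem by induction on $k$, reducing to the classical uniqueness of Heegaard splittings of $S^3$ (Waldhausen's original theorem) and then bootstrapping via a connected-sum decomposition that comes from Haken's lemma.

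For the base case $k=0$, the statement is Waldhausen's theorem that every genus-$g$ Heegaard splitting of $S^3$ is isotopic to the standard one. I would prove this via a Rubinstein--Scharlemann sweep-out argument: given two genus-$g$ splittings $\Sigma,\Sigma'$, parametrize each by a sweep-out of $S^3$, analyze the Rubinstein--Scharlemann graphic of the pair, and extract a spanning region which produces an isotopy taking $\Sigma$ to a stabilization of $\Sigma'$. Since all genus-$g$ splittings of $S^3$ are stabilizations of the genus-$0$ splitting, and stabilization commutes with isotopy, this yields the base case. As a separate small base case one verifies directly that the genus-$1$ splitting of $S^1\X S^2$ is unique, by analyzing essential disks in each of the two solid tori meeting along a genus-$1$ surface.

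For the inductive step, suppose the theorem holds for $k-1$ (and all genera) and fix a genus-$g$ splitting $\Sigma = H_1 \cup_\Sigma H_2$ of $\#^k(S^1\X S^2)$ with $g\geq k \geq 1$. Since the manifold is reducible, I would apply Haken's lemma: the splitting surface $\Sigma$ can be isotoped so that some essential $2$-sphere $S$ meets $\Sigma$ in a single essential simple closed curve bounding a disk in each $H_i$. The standard proof of Haken's lemma is an innermost-disk / outermost-arc argument on the intersection of $\Sigma$ with a given reducing sphere, followed by handle-cancellation. Cutting along $S$ and capping the pieces with $3$-balls decomposes $\#^k(S^1\X S^2)$ as a connected sum $Y_1 \# Y_2$, and the pieces of $\Sigma$ become Heegaard surfaces $\Sigma_i$ of $Y_i$ of genera $g_i$ with $g_1 + g_2 = g$. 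Prime decomposition forces $Y_i = \#^{k_i}(S^1\X S^2)$ with $k_1+k_2 = k$ and $g_i \geq k_i$, so each factor splitting is unique by the induction hypothesis (noting that at least one $k_i$ is strictly less than $k$, and the case $k_i=0$ is Waldhausen for $S^3$). A final lemma asserts that Heegaard splittings of $Y_1 \# Y_2$ that admit reducing spheres are determined up to isotopy by their factor splittings; this follows from the Alexander trick applied to the $2$-sphere $S$, which shows that the gluing data along $S$ is essentially unique.

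The main obstacle is unquestionably Waldhausen's theorem for $S^3$, whose proof requires substantial machinery (either Waldhausen's original 2-parameter argument or the sweep-out technology of Rubinstein--Scharlemann and Scharlemann--Thompson). Haken's lemma is technical but by now standard. The remaining subtlety is that, in the inductive step, one must rule out the possibility that the reducing sphere $S$ produces a trivial decomposition (i.e., one factor is $S^3$ with a genus-$0$ splitting and the other is the original manifold), which would not reduce the complexity; this is handled by choosing $S$ so that the essential curve $S\cap\Sigma$ is essential on $\Sigma$ rather than bounding a disk in $\Sigma$, ensuring a genuine drop in genus or in $k$.
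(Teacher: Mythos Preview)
The paper does not prove Theorem~\ref{wald}; it is quoted from the literature (Waldhausen~\cite{waldhausen}) and used as a black box in Lemma~\ref{capoff} and in Section~\ref{sec:alternate}. So there is no ``paper's own proof'' to compare against.

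That said, your outline is essentially the standard modern route to this result (closer in spirit to the treatments via Haken's lemma and sweep-outs than to Waldhausen's original argument). One genuine gap: in the inductive step you implicitly assume the Haken reducing sphere $S$ is \emph{separating} in $\#^k(S^1\times S^2)$, so that cutting along it yields a connected-sum decomposition $Y_1\# Y_2$. But $\#^k(S^1\times S^2)$ contains non-separating essential spheres, and Haken's lemma does not guarantee a separating one. If $S$ is non-separating, cutting along $S$ and capping off produces a single manifold $\#^{k-1}(S^1\times S^2)$ with an induced genus $g-1$ splitting, and you must argue separately that this operation can be reversed uniquely (up to isotopy) to recover the original splitting. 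This case is in fact the cleaner one for the induction, but it needs to be stated.

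A second point to tighten: your ``final lemma'' that factor splittings determine the connected-sum splitting requires more than the Alexander trick on $S$. You need that the disk in each handlebody bounded by $S\cap\Sigma$ is unique up to isotopy rel boundary (which follows from irreducibility of handlebodies), and that the identification of the two copies of $S$ after cutting is immaterial; the latter is where the Alexander trick actually enters, but the argument should be spelled out for the Heegaard structure, not just the ambient manifold.
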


We will also need to build a handlebody extension $P$ of a given singular fibration, and we accomplish this task in pieces.  The most important component of this piecewise construction is given in Lemma~\ref{capoff} below.  For our purposes, a Morse function $h:Y \rightarrow [0,3]$ is \emph{self-indexing} if $h$ has one index zero critical point occurring at $h^{-1}(0)$, one index three critical point occurring at $h^{-1}(3)$, and if $h^{-1}(t)$ is a regular surface separating the index one critical points from the index two critical points for any $t \in [1,2]$.  Note that this definition is somewhat nonstandard.

\begin{lemma}\label{capoff}
Let $g \geq k \geq 0$ and let $h:\#^k(S^1\X S^2) \rightarrow [0,3]$ be a self-indexing Morse function.  Viewing $h$ as a map to $(I \X \{0\}) \cup (\{1\} \X I) \cup (I \X \{1\}) \subset I \X I$, with the natural time direction reversed in $I \X \{1\}$, there exists a generic map $Q:\natural^k(S^1 \X B^3) \rightarrow I \X I$ extending $h$, where the base diagram of $Q$ has $k$ indefinite folds without cusps, $g-k$ indefinite folds containing a single cusp, and a single definite fold mapping to $\{0\} \X I$, as shown in Figure~\ref{handlebody}.
\end{lemma}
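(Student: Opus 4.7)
The plan is to construct $Q$ in three stages: build a model extension in the base case $g = k$ using a product structure, introduce $g - k$ cusps to reach general $g \geq k$, and adjust on the boundary to match $h$ using Waldhausen's theorem.

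For the base case $g = k$, identify $\natural^k(S^1 \X B^3)$ with $H \X I$, where $H$ is a genus $k$ handlebody. Choose a Morse function $q : H \to [0, 1]$ with exactly one index 0 critical point at $q^{-1}(0)$, exactly $k$ index 1 critical points with values in $(0,1)$, no higher-index critical points, and $q^{-1}(1) = \partial H$ a regular level (such $q$ arises from any handle decomposition of $H$ as a 0-handle with $k$ 1-handles). Define $\widetilde Q : H \X I \to I \X I$ by $\widetilde Q(y, t) = (q(y), t)$; its singular locus $\text{crit}(q) \X I$ consists of one definite fold mapping onto $\{0\} \X I$ and $k$ uncusped indefinite folds. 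The boundary components $H \X \{0\}$, $\partial H \X I$, $H \X \{1\}$ map to the bottom, right, and top edges of $I \X I$ respectively. After smoothing corners, the induced boundary Morse function is self-indexing with $2 + 2k$ critical points realizing the standard genus $k$ Heegaard splitting of $\#^k(S^1 \X S^2)$.

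For general $g \geq k$, beginning with $\widetilde Q$, introduce $g - k$ cusps one at a time by local birth modifications in disjoint neighborhoods of the top edge of $I \X I$, each disjoint from the existing singularities. Using the standard cusp chart $(t, x_1, x_2, x_3) \mapsto (t, x_1^3 + t x_1 \pm x_2^2 \pm x_3^2)$ as the local model, each birth inserts an indefinite fold arc with a single cusp whose two endpoints lie on the top edge; on the boundary it introduces a canceling pair of an index 1 and an index 2 critical point, stabilizing the Heegaard splitting by genus one. After $g - k$ births, the resulting map $Q_{\text{std}}$ has $k$ uncusped indefinite folds, $g - k$ cusped indefinite folds, and one definite fold; its boundary $h_{\text{std}}$ is self-indexing with $2 + 2g$ critical points realizing a standard genus $g$ Heegaard splitting.

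Finally, to extend $h$ itself, by Waldhausen's Theorem~\ref{wald}, the genus $g$ Heegaard splitting determined by $h$ is isotopic to that determined by $h_{\text{std}}$. Combining this with Cerf-theoretic uniqueness of Morse functions on handlebodies with prescribed critical data (and a reparametrization of the target to align critical values) produces an ambient self-diffeomorphism $\phi$ of $\#^k(S^1 \X S^2)$ with $h_{\text{std}} \circ \phi = h$. Extending $\phi$ using a collar neighborhood of $\partial \natural^k(S^1 \X B^3)$ to a self-diffeomorphism $\Phi$ and setting $Q := Q_{\text{std}} \circ \Phi^{-1}$ gives the required extension, with fold-and-cusp data identical to $Q_{\text{std}}$. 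The main obstacle is this matching step: Waldhausen yields isotopy only of splitting surfaces, and promoting it to equality of full Morse functions requires the Cerf-theoretic argument above together with a careful collar extension that preserves the generic-map structure.
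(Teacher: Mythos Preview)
Your overall architecture—build a standard model and then pull back by a diffeomorphism—is reasonable, and the use of Waldhausen is exactly right.  However, the step you yourself flag as ``the main obstacle'' is a genuine gap.  You assert the existence of a diffeomorphism $\phi$ of $\#^k(S^1\times S^2)$ with $h_{\text{std}}\circ\phi = h$.  Waldhausen gives an isotopy of Heegaard \emph{surfaces}, and Cerf-theoretic uniqueness on each handlebody gives diffeomorphisms $\psi_\alpha:H_\alpha\to H_\alpha$ and $\psi_\beta:H_\beta\to H_\beta$ intertwining the restricted Morse functions; but there is no reason for $\psi_\alpha|_\Sigma$ and $\psi_\beta|_\Sigma$ to agree, so they need not glue to a global $\phi$.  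Concretely, two self-indexing Morse functions with isotopic Heegaard surfaces can induce Heegaard diagrams with different geometric intersection numbers, which obstructs the existence of a level-preserving diffeomorphism between them.  Even if $\phi$ existed, your ``collar extension'' to $\Phi$ only works when $\phi$ is isotopic to the identity; otherwise you would need to invoke Laudenbach--Poenaru, which you do not.

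The paper's proof avoids this issue entirely.  Rather than seeking a single global diffeomorphism, it realizes the handleslide sequence from Waldhausen's theorem as two \emph{independent} $1$-parameter families of Morse functions, one on each handlebody $H$ and $H'$.  These families define $Q$ on $H\times[0,1/3]$ and $H'\times[2/3,1]$ as the ``braided'' portions of the base diagram, and the standard diagram is then capped off in the middle third $[1/3,2/3]$ by the cusped and uncusped folds.  Because the $\alpha$- and $\beta$-sides are handled in disjoint time intervals, no compatibility on $\Sigma$ is required.  This is precisely the content that your hand-waved ``Cerf-theoretic argument'' would need to supply.  A minor additional issue: your cusp births should occur along the product region $\{1\}\times I$ (where $\partial H\times I$ maps), not the top edge $I\times\{1\}$; placing both endpoints of a cusped arc on the top edge would add an index-$1$/index-$2$ pair to $q|_{H\times\{1\}}$ alone rather than stabilizing the Heegaard splitting.
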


\begin{proof}
Choose a Heegaard diagram $(\A,\n)$ compatible with $h$.  By Waldhausen's Theorem, $(\A,\n)$ is handleslide equivalent to a standard diagram $(\A',\n')$, in which $\A'_i = \n'_i$ for $1 \leq i \leq k$ and $\A'_i \cap \n'_j = \delta_{ij}$ for $k+1 \leq i,j \leq g$.  Let $H = h^{-1}([0,1])$, let $H' = h^{-1}([2,3])$, and let $\Sigma = h^{-1}(3/2)$, so that $h^{-1}([1,2]) = \Sigma \X I$ and $\#^k(S^1 \X S^2)$ decomposes as $H \cup (\Sigma \X I) \cup H'$.  Since $\A$ and $\A'$ are equivalent, there exists a 1-parameter family of Morse functions $h_t: H \rightarrow I$, where $t \in [0,1/3]$, $h_0 = h|_H$, and the cut system $\A'$ is compatible with $h_{1/3}$.  Then $h_t$ gives rise to $Q:H \X [0,1/3] \rightarrow I \X [0,1/3]$ by $Q(x,t) = (h_t(x),t)$, where the base diagram for $Q$ is braided as shown in the bottom third of Figure~\ref{handlebody}.

A parallel construction produces a 1-parameter family of Morse functions $g_t:H' \rightarrow I$, where $t \in [2/3,1]$, $g_1$ is equal to $h|_{H'}$ with time direction reversed, and $g_{2/3}$ is compatible with the cut system $\beta'$.  As such, $g_t$ gives rise to $Q:H' \X [2/3,1] \rightarrow I \X [2/3,1]$ by $Q(x,t)= (g_t(x),t)$, where the base diagram for this portion of $Q$ is braided as shown in the top third of Figure~\ref{handlebody}.

The last step is to complete the extension of $Q$ over $\natural^k(S^1 \X B^3)$.  Along the boundary of our current extension $Q$, we have $(H \X \{1/3\}) \cup (\Sigma \X [1/3,2/3]) \cup (H' \X \{2/3\})$ mapping to $(I \X \{1/3\}) \cup (\{1\} \X [1/3,2/3]) \cup (I \X \{2/3\})$, where the standard Heegaard diagram $(\A',\n')$ is compatible with the induced Morse function along the boundary.  For each of the $g-k$ pairs of curves $\A'_i \cap \n'_j = \delta_{ij}$, where $k+1 \leq i,j \leq g$, we can extend $Q$ by introducing a fold arc with a single cusp, which has no effect on the topology of the domain but modifies the range to be $(I \X [0,1/3]) \cup ([1/2,1] \X [1/3,2/3]) \cup (I \X [2/3,1])$.  Finally, for each of the $k$ curves in $\A' \cap \n'$, we can extend $Q$ by mapping a 4-dimensional 3-handle to a rectangular region containing a single vertical indefinite fold arc, followed by a 4-handle mapping to a vertical definite fold arc in $\{0\} \X [1/3,2/3]$.  The result is a map $Q:X \rightarrow I \X I$, where $X$ is obtained by attaching $k$ 3-handles and a 4-handle to a collar of $\#^k(S^1 \X S^2)$, so that $X = \natural^k(S^1 \X B^3)$, as desired. 
\end{proof}

\begin{figure}[h!]
	\centering
\begingroup%
  \makeatletter%
  \providecommand\color[2][]{%
    \errmessage{(Inkscape) Color is used for the text in Inkscape, but the package 'color.sty' is not loaded}%
    \renewcommand\color[2][]{}%
  }%
  \providecommand\transparent[1]{%
    \errmessage{(Inkscape) Transparency is used (non-zero) for the text in Inkscape, but the package 'transparent.sty' is not loaded}%
    \renewcommand\transparent[1]{}%
  }%
  \providecommand\rotatebox[2]{#2}%
  \newcommand*\fsize{\dimexpr\f@size pt\relax}%
  \newcommand*\lineheight[1]{\fontsize{\fsize}{#1\fsize}\selectfont}%
  \ifx\svgwidth\undefined%
    \setlength{\unitlength}{118.00559937bp}%
    \ifx\svgscale\undefined%
      \relax%
    \else%
      \setlength{\unitlength}{\unitlength * \real{\svgscale}}%
    \fi%
  \else%
    \setlength{\unitlength}{\svgwidth}%
  \fi%
  \global\let\svgwidth\undefined%
  \global\let\svgscale\undefined%
  \makeatother%
  \begin{picture}(1,0.96431281)%
    \lineheight{1}%
    \setlength\tabcolsep{0pt}%
    \put(0,0){\includegraphics[width=\unitlength,page=1]{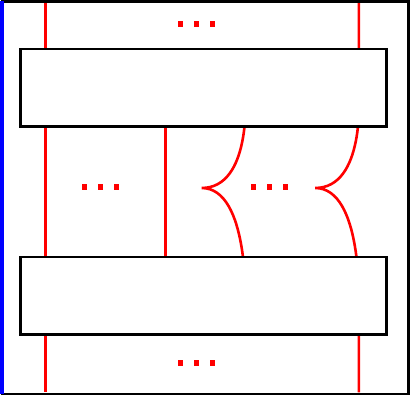}}%
    \put(0.21347971,0.54740081){\color[rgb]{1,0,0}\makebox(0,0)[lt]{\lineheight{1.25}\smash{\begin{tabular}[t]{l}$k$\end{tabular}}}}%
    \put(0.595479,0.54740081){\color[rgb]{1,0,0}\makebox(0,0)[lt]{\lineheight{1.25}\smash{\begin{tabular}[t]{l}$g-k$\end{tabular}}}}%
        \put(0.39,0.715){\makebox(0,0)[lt]{\lineheight{1.25}\smash{\begin{tabular}[t]{l}braid\end{tabular}}}}%
        \put(0.39,0.215){\makebox(0,0)[lt]{\lineheight{1.25}\smash{\begin{tabular}[t]{l}braid\end{tabular}}}}%
    \put(0,0){\includegraphics[width=\unitlength,page=2]{handlebody.pdf}}%
  \end{picture}%
\endgroup%

\caption{A generic map from a 4-dimensional 1-handlebody to $I \X I$.}
	\label{handlebody}
\end{figure}

Now, we put all of these tools together to prove the next theorem.  Together with Theorem~\ref{hr}, this establishes one of the two versions of Theorem~\ref{extensions} described in the introduction.

\begin{theorem}\label{fibration1}
Suppose $K$ is a knot in $S^3$.  Then $K$ has an R-link derivative $L$ if and only if there exists a self-indexing singular fibration $p$ of $S^3_0(K)$ that extends over handlebodies to a map $P:X \rightarrow S^1 \X I$.  Moreover, the 4-manifolds $B_L$ and $B_P(K)$ are diffeomorphic.
\end{theorem}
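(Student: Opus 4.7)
One direction of the equivalence, namely that a singular fibration extension implies an R-link derivative with $B_L \cong B_P(K)$, is Lemma~\ref{reverse_direction}. We focus on the opposite direction: given an R-link derivative $L \subset F$ for $K$, we construct the required $p$, $X$, and $P$. Write $\wh F \subset S^3_0(K)$ for the closed surface obtained by capping $F$ with the meridian disk of the dual $K^*$, let $H$ be the abstract handlebody with $\pd H = \wh F$ having $L$ as a complete meridian system, and set $k = |L|$.

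We first construct the singular fibration and the underlying $4$-manifold. Since $[\wh F]$ generates $H_2(S^3_0(K);\Z)$, the surface $\wh F$ is nonseparating; cutting $S^3_0(K)$ along $\wh F$ produces a cobordism $N$ with $\pd N = \wh F \sqcup \wh F$. Equip $N$ with a self-indexing Morse function rel boundary (from a relative Heegaard splitting of $N$ of some genus $g \geq k$) and reassemble to obtain the self-indexing singular fibration $p:S^3_0(K) \to S^1$ with thin fiber $\wh F$ and thick fiber of genus $g$. Next, form $X_0$ by gluing $H \X I$ to $S^3_0(K) \X I$ along a $3$-dimensional collar of $\wh F \X \{1\}$; equivalently, $X_0$ arises from $S^3_0(K) \X I$ by attaching $k$ zero-framed $2$-handles along $L \X \{1\}$ followed by a single $3$-handle. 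Since $L$ is an R-link derivative, $K \cup L$ is itself an R-link (as shown in the proof of Theorem~\ref{hr}), so after the $2$-handles the top boundary is $\#^{k+1}(S^1 \X S^2)$; the $3$-handle attaches along the nonseparating $2$-sphere obtained by surgering $\wh F$, reducing the top boundary to $\#^k(S^1 \X S^2)$, which is simultaneously identifiable with $H \cup_{\wh F} N \cup_{\wh F} H$. Applying Laudenbach-Poenaru, we cap this boundary uniquely with $X_1 \cong \natural^k(S^1 \X B^3)$ to define $X := X_0 \cup X_1$.

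To construct $P:X \to S^1 \X I$, we set $P(x,t) = (p(x),t)$ on the collar $S^3_0(K) \X [0,\delta]$. Over a rectangle $[-\eps,\eps] \X [\delta, 1]$, on the piece $H \X [-\eps, \eps] \subset X_0$, set $P(h,\theta) = (\theta, h_0(h))$ for a standard Morse function $h_0:H \to [\delta, 1]$ with $\pd H$ as the minimal regular level. The restriction of $P$ to $\pd X_1 \cong \#^k(S^1 \X S^2)$ then assembles into a self-indexing Morse function with middle level the thick (genus $g$) fiber: the two copies of $H$ contribute via $h_0$ the critical points of indices $0,1$ on one side and indices $2,3$ on the other, while $p|_N$ contributes the remaining $g-k$ critical points each of indices $1$ and $2$. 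Applying Lemma~\ref{capoff} to this Morse function produces a generic map on $X_1$ extending the data over $[\eps, 2\pi - \eps] \X [\delta, 1]$ with $k$ indefinite folds without cusps, $g-k$ indefinite folds with cusps, and a single definite fold along the top edge, matching Figure~\ref{extension}.

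The main technical obstacle is careful bookkeeping: verifying that the pieces of $P$ glue smoothly and that the Morse function induced on $\pd X_1$ precisely satisfies the hypotheses of Lemma~\ref{capoff} after the natural reparameterization. Once this is in place, the diffeomorphism $B_P(K) \cong B_L$ follows from the construction: $B_P(K) = B_0(K) \cup X$ admits a handle decomposition with one $2$-handle along $K$, $k$ $2$-handles along $L$, $k+1$ $3$-handles, and one $4$-handle, exhibiting it as $B_{K \cup L}$; and $B_{K \cup L} \cong B_L$ because the handleslides of $K$ over components of $L$ (available since $L$ is a derivative of $K$) convert $K$ to an unknot, producing a canceling $2$-$3$ pair.
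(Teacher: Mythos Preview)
Your argument is essentially the same as the paper's: both directions invoke Lemma~\ref{reverse_direction} for the reverse implication, and for the forward implication both build $X_0$ as the union of a collar $S^3_0(K)\times I$ with $H\times I$ attached along a collar of $\wh F$, identify the remaining boundary as $\#^k(S^1\times S^2)$ carrying a self-indexing Morse function, and then cap off with $X_1\cong\natural^k(S^1\times B^3)$ via Lemma~\ref{capoff}. The only cosmetic difference is that the paper first defines $X$ inside $B_{K\cup L}$ (by removing the $2$-handle dual to $K^*$ from a turned-over handle decomposition) and then constructs $P$ on it, whereas you build $X_0$ directly and invoke Laudenbach--Po\'enaru to justify the capping; the paper also makes the ``reparameterization'' you flag explicit via a diffeomorphism of the target (the magnifying-glass region $A_0\to A_1$, Figure~\ref{magnify}), which is exactly the bookkeeping step you acknowledge.
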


\begin{proof}
The reverse implication is the content of Lemma~\ref{reverse_direction}.  For the other implication, suppose that $K$ has an R-link derivative $L$ contained in a Seifert surface $F$, and let $\wh F$ denote the closed surface in $S^3_0(K)$ obtained by capping $\pd F$ with a disk in the surgery solid torus.  Then $S^3_0(K) \setminus \wh F$ is connected, and there is a Morse function  $p_1:S^3_0(K) \setminus \wh F \rightarrow I$ with index one and two critical points and such that the two parallel copies of $\wh F$ composing $\pd(S^3_0(K) \setminus \wh F)$ are $p_1^{-1}(\{0,1\})$.  Generically, there is a homotopy from $p_1$ to a map $p_2$ in which all index one critical points occur below the index two critical points, and finally re-gluing $S^3_0(K) \setminus \wh F$ along $\wh F$ yields a self-indexing singular fibration $p:S^3_0(K) \rightarrow S^1$ in which $\wh F$ is a thin surface.  Note that $H_1(S^3_0(K)) = \Z$, which implies that the fibers of $p$ are connected.

The R-link $L$ is stably equivalent to the R-link $K \cup L$ by Lemma~\ref{kremove}, where $B_{K \cup L}$ has a relative handle decomposition with $k+1$ 2-handles, $k+1$ 3-handles, and a 4-handle, with $k = |L|$.  Flipping this decomposition yields a handle decomposition for $B_{K \cup L}$ with a 0-handle, $k+1$ 1-handles, and $k+1$ 2-handles.  Consider the 4-manifold $X \subset B_{K \cup L}$ consisting of the 0-handle, $k+1$ 1-handles, and $k$ 2-handles, so that $B_{K \cup L}$ is obtained from $X$ by attaching a 2-handle along the dual $K^*$.  It follows that $\pd X = S^3_0(K)$.  We claim that $p$ extends over handlebodies to a map $P:X \rightarrow S^1 \X I$.

To build $P$, we note first that there is a natural map $P$ from a collar $S^3_0(K) \X I \subset X$ to an annulus by taking $p \X \text{Id}$.  The 4-manifold $X$ has a relative handle decomposition obtained by attaching $k$ 2-handles to $L \X \{1\} \subset \wh F \X \{1\} \subset S^3_0(K) \X I$, followed by $k+1$ 3-handles and a 4-handle.  A collar neighborhood $I \X \wh F$ in $S^3_0(K) \X \{1\}$ bounds a collar neighborhood $I \X H$ of a 3-dimensional genus $k$ handlebody $H$ in $X$, where $I \X H$ can be constructed by attaching the $k$ 2-handles to $L \subset \wh F$ and capping off the resulting $I \X B^3$ with a 3-handle (as in the proof of Lemma~\ref{reverse_direction}).  Let $X_0 = (S^3_0(K) \X I) \cup (I \X H)$.  There is a natural generic map from $I \X H$ to $I \X I$ with $k$ indefinite folds and one definite fold, which can be used to extend $P$ to a generic map from $X_0$ to the region $A_0$, which is shaped like a magnifying glass and shown at left in Figure~\ref{magnify}.

\begin{figure}[h!]
	\centering
	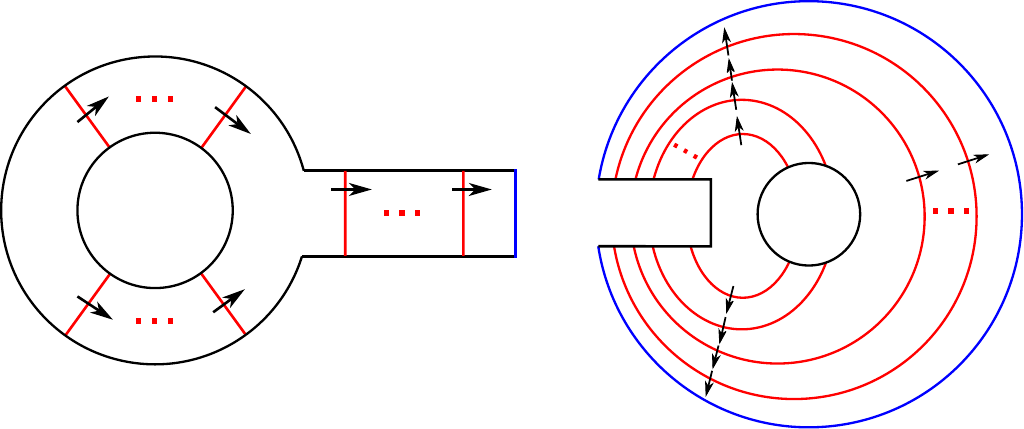
\caption{At left, the region $A_0$, and at right, its image $A_1$ under diffeomorphism.}
\label{magnify}
\end{figure}

Now, there is a diffeomorphism from $A_0$ to the region $A_1$, transforming the base diagram as shown in the right of Figure~\ref{magnify}.  In an abuse of notation, let $P$ be the generic map from $X_0$ to $A_1$.  Finally, let $X_1 = X \setminus X_0$.  Then $X_1$ is a 4-dimensional genus $k$ handlebody, so that $\pd X_0 = \#^k(S^1 \X S^2)$.  Since the original singular fibration $p$ was self-indexing, it follows that our constructed map $P$ induces a self-indexing Morse function on $\pd X_0$, which can be capped off by a generic map $Q$ of the form given in Lemma~\ref{capoff} and shown in Figure~\ref{handlebody}, extending $P$ over $X$.  By inspection, $p$ extends over handlebodies to $P:X \rightarrow S^1 \X I$, and by construction we have that $B_{L} = B_{K \cup L} = B_P(K)$, completing the proof.
\end{proof}

\section{An alternate perspective: circular Morse functions}\label{sec:alternate}

In this section, we provide another interpretation of Theorem~\ref{extensions}, proving Theorem~\ref{fibration2} following the techniques in~\cite{maggie}.  This theorem asserts that $K$ is handle-ribbon in a homotopy 4-ball if and only if there exists a singular fibration of $S^3_0(K)$ that extends over handlebodies via a circular Morse function (defined below).  Because the forward direction of Theorem~\ref{fibration2} also follows from Theorems~\ref{hr} and~\ref{fibration1}, we omit some details in this argument. This section is meant to illustrate a different perspective; both proofs are constructions, but the proof here builds a 1-parameter family of singular fibrations instead of a generic map to an annulus.  For a given handle-ribbon knot $K$ and singular fibration $p:S^3_0(K) \rightarrow S^1$, the 1-parameter family constructed here could be used to construct a generic map, possibly with smaller genus fibers than an extension $P:X \rightarrow S^1 \X I$ of $p$ constructed in Section~\ref{sec:extend}, but at the expense of having a more complicated graphic (see Remark~\ref{thinremark} below).

Suppose that $Y$ is a 3-manifold and $p_t:Y \rightarrow S^1$ is a smooth family of singular fibrations such that each $p_t$ has the same set of fibers.  Suppose further that $p_{t_0}$ has an index one or two critical point, so that there is a singular fiber $p_{t_0}^{-1}(\theta)$ containing a cone point $q$.  We say that this singularity is \emph{type II} or \emph{compressing} (in the language of~\cite{maggie}) if a neighborhood of $q$ in $\bigcup_t p_t^{-1}(\theta)$ intersects $p_t^{-1}(\theta)$ in an annulus for $t\in (t_0-\eps,t_0)$ and a pair of disks if $t \in (t_0,t_0+\eps)$. See Figure~\ref{fig:typeiicone}. The type of a cone is a stable property in that if the type of a cone ever changes as $t$ increases, then there is some $t',\theta'$ for which the type of a cone in $p_{t'}^{-1}(\theta')$ is undefined.  In this case, $\bigcup_t p_{t}^{-1}(\theta')$ is a singular 3-manifold.  A type II cone $p_{t}^{-1}(\theta)$ contributes a 3-dimensional 2-handle to $\bigcup_tp_t^{-1}(\theta)$.  See~\cite{maggie} for further details.

\begin{figure}[h!]
	\centering
\begingroup%
  \makeatletter%
  \providecommand\color[2][]{%
    \errmessage{(Inkscape) Color is used for the text in Inkscape, but the package 'color.sty' is not loaded}%
    \renewcommand\color[2][]{}%
  }%
  \providecommand\transparent[1]{%
    \errmessage{(Inkscape) Transparency is used (non-zero) for the text in Inkscape, but the package 'transparent.sty' is not loaded}%
    \renewcommand\transparent[1]{}%
  }%
  \providecommand\rotatebox[2]{#2}%
  \newcommand*\fsize{\dimexpr\f@size pt\relax}%
  \newcommand*\lineheight[1]{\fontsize{\fsize}{#1\fsize}\selectfont}%
  \ifx\svgwidth\undefined%
    \setlength{\unitlength}{247.27627251bp}%
    \ifx\svgscale\undefined%
      \relax%
    \else%
      \setlength{\unitlength}{\unitlength * \real{\svgscale}}%
    \fi%
  \else%
    \setlength{\unitlength}{\svgwidth}%
  \fi%
  \global\let\svgwidth\undefined%
  \global\let\svgscale\undefined%
  \makeatother%
  \begin{picture}(1,0.39539657)%
    \lineheight{1}%
    \setlength\tabcolsep{0pt}%
    \put(0,0){\includegraphics[width=\unitlength,page=1]{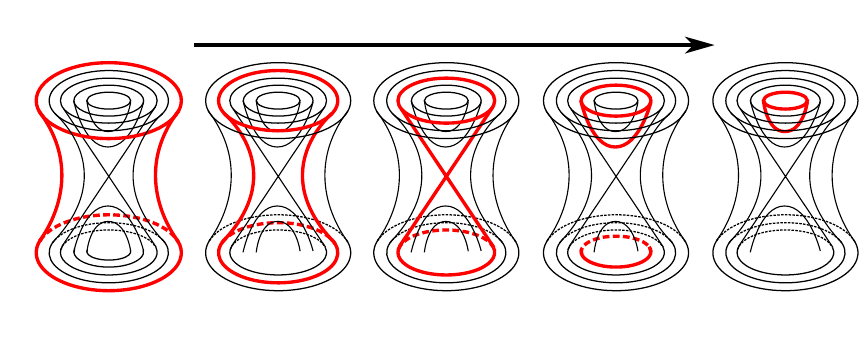}}%
    \put(0.51241147,0.35598649){\color[rgb]{0,0,0}\makebox(0,0)[lt]{\lineheight{1.25}\smash{\begin{tabular}[t]{l}$t$\end{tabular}}}}%
    \put(-0.00430573,0.3423763){\color[rgb]{1,0,0}\makebox(0,0)[lt]{\lineheight{1.25}\smash{\begin{tabular}[t]{l}$f_t^{-1}(\theta)$\end{tabular}}}}%
    \put(0,0){\includegraphics[width=\unitlength,page=2]{typeiicone.pdf}}%
    \put(0.47195547,0.01028579){\color[rgb]{0,0,0}\makebox(0,0)[lt]{\lineheight{1.25}\smash{\begin{tabular}[t]{l}$t=t_0$\end{tabular}}}}%
  \end{picture}%
\endgroup%

\caption{A type II critical point of $p_{t_0}$. We draw a neighborhood of the critical point for $t\in[t_0-\eps,t_0+\eps]$.}
	\label{fig:typeiicone}
\end{figure}

Suppose now $X$ is a compact 4-manifold and $\Ph:X \rightarrow S^1$ is a circular Morse function, with $Y = \pd X$ and $p = \Ph\,|_Y$.  On a collar neighborhood $Y \X I \subset X$, of $Y$, where $Y = Y \X \{0\}$, $\Ph$ induces a smooth family of singular fibrations $p_t$, where $p = p_0$ and each $p_t$ has the same fibers as $p_0$, up to reparametrization of $S^1$.  For each boundary critical point $q$ of $\Ph$, we assume by genericity that each singularity of $p_t$ has a well-defined type for small $t > 0$, and we say the boundary critical point is type II if the corresponding singularity of $p_t$ has type II for small $t > 0$. If $y_1\in p_0^{-1}(\theta)$ is an index one critical point of $p_0$ which is type II, then $y_1$ contributes a 1-handle to $\cup_tp_t^{-1}(\theta-\eps,\eta+\eps)$. If $y_1\in p_0^{-1}(\theta)$ is an index two critical point of $p_0$ which is type II, then $y_2$ contributes no handles to $\cup_tp_t^{-1}(\theta-\eps,\eta+\eps)$. In the language of~\cite{BNR}, index one type II boundary critical points contribute right half-handles while index two type II boundary critical points contribute left half-handles.

Let $K\subset S^3$ be a knot and suppose there exists a singular fibration $p$ of $S^3_0(K)$ that extends to a circular Morse function $\Ph:X\to S^1$ so that every critical point of $\Ph$ is either a boundary critical point of type II or an interior critical point of index two, the number of boundary critical points is twice the number of interior critical points, and the regular fibers of $\Ph$ are 3-dimensional handlebodies. Then we say that {\emph{$p$ extends over handlebodies to the circular Morse function $\Ph$}}.

\begin{remark}
If $\Ph:X\to S^1$ is a circular Morse function with $n$ index one type II boundary critical points, $n$ index two type II boundary critical points, $m$ interior index two critical points, and no other critical points, then $\chi(X)=m-n$. Therefore, requiring that the number of boundary critical points be twice the number of interior critical points (i.e. $n=m$) is equivalent to requiring $\chi(X)=0$.
\end{remark}

\begin{remark}
Despite the fact that the circular Morse function $\Ph:X \rightarrow S^1$ could also be described as a singular fibration, we use \emph{singular fibration} exclusively for a circular Morse function $p:Y \rightarrow S^1$, where $Y$ is a 3-manifold, as an attempt to avoid confusion.  Note that the definition here is more relaxed than extending over handlebodies to an annular generic map as in Section~\ref{sec:extend}; if a self-indexing singular fibration $p:Y \rightarrow S^1$ extends over handlebodies to the generic map $P:X \rightarrow S^1 \X I$, then the projection $P_1:X \rightarrow S^1$ is an extension of $p$ over handlebodies to the circular Morse function $P_1$.  On the other hand, a singular fibration $p$ need not be self-indexing to admit an extension over handlebodies to a circular Morse function, for example.
\end{remark}

We can now state the main theorem from this section.

\begin{theorem}\label{fibration2}
A knot $K \subset S^3$ is handle-ribbon in a homotopy 4-ball if and only if there exists a singular fibration $p:S^3_0(K) \rightarrow S^1$ that admits an extension over handlebodies to a circular Morse function $\Ph:X \rightarrow S^1$.
\end{theorem}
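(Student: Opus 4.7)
The forward direction is immediate from Theorems~\ref{hr} and~\ref{fibration1}. Given a handle-ribbon knot $K$, Theorem~\ref{hr} yields an R-link derivative $L$, and Theorem~\ref{fibration1} produces a self-indexing singular fibration $p:S^3_0(K)\to S^1$ that extends over handlebodies to a generic map $P:X\to S^1\X I$. Define $\Ph:=P_1:X\to S^1$, the projection of $P$ onto the $S^1$ factor. The regular fibers of $\Ph$ are precisely the handlebodies $P^{-1}(\{\theta\}\X I)$ witnessing the extension over handlebodies for $P$, and $\Ph|_{\pd X}=p$. By the directedness of $P$, the critical points of $p$ are boundary critical points of $\Ph$ of type~II, and the cusp points of $P$ are interior critical points of $\Ph$ of index~$2$; the equality of their counts follows from condition~(4) of the handlebody extension definition for $P$. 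Finally, $\chi(X)=\chi(B_P(K))-1=0$, which is equivalent to $n=m$.

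For the reverse direction, I plan to construct a homotopy $4$-ball in which $K$ bounds a handle-ribbon disk. Define $B:=B_0(K)\cup_{S^3_0(K)}X$, a compact $4$-manifold with $\pd B=S^3$ and $\chi(B)=\chi(B_0(K))+\chi(X)-\chi(S^3_0(K))=1+0-0=1$, and let $D\subset B$ be the co-core of the $2$-handle in $B_0(K)$, so that $\pd D=K$ and $B\setminus D=X$. It suffices to show that $X$ admits a handle decomposition with no $3$-handles (which makes $D$ a handle-ribbon disk) and that $B$ is simply connected.

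Fix a regular value $\theta_0\in S^1$ of $\Ph$ and let $H_0=\Ph^{-1}(\theta_0)$, a $3$-dimensional handlebody of genus $k$ with boundary $\wh F:=p^{-1}(\theta_0)\subset\pd X=S^3_0(K)$. Imitating Lemma~\ref{reverse_direction}, let $X_0\subset X$ be a regular neighborhood of $\pd X\cup H_0$, built from $\pd X\X I$ by attaching $k$ $2$-handles along a cut system $L\subset\wh F$ for $H_0$ (with surface framing) and a single $3$-handle capping off the remaining $I\X B^3$. Let $X_1:=X\setminus X_0$. If $X_1\cong\natural^k(S^1\X B^3)$, then $X$ admits a relative handle decomposition from $\pd X$ with $k$ $2$-handles, $(k+1)$ $3$-handles, and one $4$-handle. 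Turning upside down yields a handle decomposition of $X$ with indices $0,1,2$ only (hence no $3$-handles), exhibiting $D$ as a handle-ribbon disk; the resulting handle decomposition of $B$ then has no $1$-handles, so $\pi_1(B)=1$ and $B$ is a homotopy $4$-ball.

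The principal obstacle is proving $X_1\cong\natural^k(S^1\X B^3)$ using only the data of $\Ph$, without a base diagram. Restricting $\Ph$ to $S^1\setminus[\theta_0-\epsilon,\theta_0+\epsilon]\cong I$, the critical points of $\Ph$ in this arc contribute to $X_1$, in the language of~\cite{BNR}, $n$ right half-handles (from type~II boundary critical points of index $1$), $n$ left half-handles (from type~II boundary critical points of index $2$), and $m=n$ interior $2$-handles. My plan is to cancel each interior $2$-handle against a left half-handle in pairs, leaving $X_1$ built from a $0$-handle and $n$ right half-handles playing the role of $1$-handles. Waldhausen's Theorem~\ref{wald} will then identify the induced genus-$n$ Heegaard splitting of $\pd X_1\cong\#^n(S^1\X S^2)$ with the standard one, forcing $n=k$ and completing the identification $X_1\cong\natural^k(S^1\X B^3)$, in the spirit of Lemma~\ref{capoff}.
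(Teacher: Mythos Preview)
Your forward direction is fine, and indeed the paper explicitly notes that projecting the generic map $P$ of Theorem~\ref{fibration1} to its $S^1$-coordinate gives the desired circular Morse function.

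The reverse direction, however, contains a real gap. You attempt to transplant the argument of Lemma~\ref{reverse_direction} to the circular Morse function setting by showing $X_1\cong\natural^k(S^1\times B^3)$, but the step ``cancel each interior $2$-handle against a left half-handle'' is not meaningful: a type~II index-two boundary critical point contributes a \emph{left} half-handle, which (by the very lemma you cite from~\cite{BNR}) has no effect on the topology and hence provides nothing to cancel against. What you actually obtain, sweeping $\Ph$ over the arc $S^1\setminus[\theta_0-\epsilon,\theta_0+\epsilon]$, is a decomposition of $X_1$ built from a $0$-handle, $(k+n)$ $1$-handles, and $n$ $2$-handles. Whether those $2$-handles cancel $1$-handles depends on their attaching circles, and nothing in the hypotheses of a circular Morse function forces this. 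In Lemma~\ref{reverse_direction} this cancellation worked precisely because the extra $I$-direction of the generic map, together with directedness, made $X_\pm$ into genuine products; that structure is absent here. Your invocation of Waldhausen at the end presupposes you already know $X_1$ is a $1$-handlebody and that $\partial X_1\cong\#^k(S^1\times S^2)$, neither of which has been established.

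The paper's Lemma~\ref{onedirection} sidesteps this entirely: rather than trying to identify $X_1$, it simply records the handle decomposition of all of $X$ that the sweep produces. Starting from $H\times[0,\epsilon]$ (a $0$-handle and $g$ $1$-handles, with $g=g(H)$), one picks up $n$ $1$-handles from the index-one type~II boundary critical points, $n$ $2$-handles from the interior index-two critical points, and nothing from the index-two type~II boundary critical points; closing up along $H\times[-\epsilon,0]$ contributes one more $1$-handle and $g$ more $2$-handles. The result is a decomposition of $X$ with a single $0$-handle, $(g+n+1)$ $1$-handles, and $(g+n)$ $2$-handles --- no $3$-handles, no cancellation, and no Waldhausen required. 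Gluing on $B_0(K)$ then yields the homotopy $4$-ball directly. Your ``principal obstacle'' is thus an artifact of mimicking the wrong lemma; the direct handle count is both simpler and complete.
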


We prove the reverse direction of Theorem~\ref{fibration2} in the following lemma.

\begin{lemma}\label{onedirection}
Let $K\subset S^3$ be a knot and $p$ a singular fibration of $S^3_0(K)$. Suppose $p$ extends over handlebodies to a circular Morse function $\Ph:X\to S^1$. Then $K$ is handle-ribbon in a homotopy 4-ball.
\end{lemma}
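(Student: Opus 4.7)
The plan is to directly construct a homotopy 4-ball $B$ in which $K$ bounds a handle-ribbon disk, using the given data $(X, \Phi)$.

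First I would define $B = B_0(K) \cup_{S^3_0(K)} X$ by gluing the relative 0-trace $B_0(K)$ of a 2-handle on $K$ to $X$ along their common boundary $S^3_0(K)$, giving a compact 4-manifold with $\partial B = S^3$. Let $D \subset B_0(K)$ be the properly embedded disk given by the union of the cylinder $K \times I \subset S^3 \times I$ and the core of the 2-handle $h^2$; then $\partial D = K \subset \partial B$ with 0-framing. The claim is that $B$ is a homotopy 4-ball and $D$ is a handle-ribbon disk for $K$.

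To verify that $B$ is a homotopy 4-ball, I would compute $\chi(B) = \chi(B_0(K)) + \chi(X) - \chi(S^3_0(K)) = 1 + 0 - 0 = 1$, using the hypothesis that the number of boundary critical points of $\Ph$ is twice the number of interior critical points, giving $\chi(X) = 0$. The boundary is $\partial B = S^3$ by construction. Simple connectedness follows from the handle structure described below, together with the observation that the meridian of $K$, which normally generates $\pi_1(S^3_0(K))$, bounds the co-core of $h^2$ in $B_0(K)$ and is thus trivial in $\pi_1(B)$.

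To verify that $D$ is handle-ribbon, I would use the circular Morse function $\Ph$ to give $X$ a handle decomposition with only 0-, 1-, and 2-handles. Viewing $X$ as essentially a mapping torus of a handlebody fiber $H_c = \Ph^{-1}(c)$ modified at critical points, one sees: the trivial mapping torus $H_c \times S^1$ admits a handle decomposition with one 0-handle, $(g+1)$ 1-handles, and $g$ 2-handles, where $g$ is the genus of $H_c$; each interior index 2 critical point of $\Ph$ contributes a 4-dimensional 2-handle; each type II index 1 boundary critical point contributes a 4-dimensional 1-handle; and each type II index 2 boundary critical point contributes no additional 4-dimensional handle (being a ``left half-handle''). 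Combining with the 2-handle $h^2$ from $B_0(K)$ and noting that $D$ contains the core of $h^2$, removing a tubular neighborhood of $D$ cancels $h^2$ in the decomposition of $B$, leaving $B \setminus N(D)$ with only 0-, 1-, and 2-handles, i.e., no 3-handles.

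The hard part will be the careful handle-theoretic analysis, particularly the treatment of the mapping-torus identification around $S^1$ and the boundary type II critical points, along with verifying that the handle cancellation between $h^2$ and $N(D)$ indeed yields a decomposition of $B \setminus N(D)$ without 3-handles. The techniques from the first author's work~\cite{maggie} on 1-parameter families of singular fibrations make this analysis tractable, and as noted in the paper's opening remarks, some technical details can be omitted since the converse direction already follows from Theorems~\ref{hr} and~\ref{fibration1}.
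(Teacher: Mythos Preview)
Your proposal is correct and follows essentially the same approach as the paper: both define $B = B_0(K) \cup_{S^3_0(K)} X$, use the circular Morse function $\Ph$ to produce a handle decomposition of $X$ with only 0-, 1-, and 2-handles (with the same handle counts you give, obtained by starting from a thickened regular handlebody fiber and accounting for the right 1-half-handles, left 3-half-handles, and interior index-two points exactly as you describe), and conclude that $K$ bounds the cocore of a 2-handle in the homotopy 4-ball $B$. The paper's write-up is slightly more careful in that it explicitly cuts $X$ along a fiber to form $X'$ before closing up the circle with one 1-handle and $g$ 2-handles, rather than invoking a ``modified mapping torus'' picture, but the content is the same.
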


\begin{proof}
It suffices to show that $X$ has a handle decomposition with a single 0-handle, $c+1$ 1-handles, and $c$ 2-handles for some integer $c$.  In this case, gluing the relative trace $B_0(K)$ to $X$ along $S^3_0(K)$ yields a compact 4-manifold $B$ built from a single 0-handle, $c+1$ 1-handles, and $c+1$ 2-handles with $\pd B = S^3$, so that $B$ is a homotopy 4-ball.  Moreover, $K$ bounds a cocore of a 2-handle in $B$, implying that $K$ is handle-ribbon in $B$.

To see that $X$ has such a handle decomposition, let $H$ be a regular fiber of $X$, so that $X$ contains a collar neighborhood $H \X [-\eps,\eps]$.  Since $H$ is a handlebody by assumption, $H \X [0,\eps]$ has a handle decomposition with one 0-handle and $g$ 1-handles, where $g$ is the genus of $H$.  Suppose that $\Ph:X \rightarrow S^1$ has $n$ interior index two critical points and $2n$ boundary critical points; as critical points of $p$, the boundary critical points consist of $n$ index one critical points and $n$ index two critical points.

Since the boundary critical points are type II, and using the language and machinery from~\cite{BNR}, passing through each index one boundary critical point corresponds to attaching a right half-handle of index one, whereas passing through each index two boundary critical point corresponds to attaching a left half-handle of index three.  By Lemmas 2-18 and 2-19 of~\cite{BNR}, a right 1-half-handle attachment corresponds to attaching a 4-dimensional 1-handle, while a left half-handle attachment has no effect on topology.  Thus, let $X' = X \setminus (H \X  [-\eps,0])$.  Then $X'$ is obtained from $H \X [0,\eps]$ by attaching $n$ 1-handles (corresponding to boundary critical points of index one) and $n$ 2-handles (corresponding to the interior critical points of $\Ph$).

Suppose the 3-dimensional handlebody $H$ is a regular neighborhood of the graph $\Gamma$, where $\Gamma$ has one 0-cell and $g$ 1-cells, $e_1,\dots,e_g$.  Since $X = X' \cup (H \X [-\eps,0])$, we can obtain $X$ from $X'$ by attaching a 1-handle along 3-dimensional neighborhoods of the vertex of $\Gamma$ in $H \X \{-\eps\}$ and $H \X \{0\}$, followed by attaching $g$ 2-handles along the curves $e_i \X [-\eps,0]$.  We conclude that $X$ has a handle decomposition with a single 0-handle, $(g+n+1)$ 1-handles, and $(g+n)$ 2-handles, completing the proof.
\end{proof}

Next, we prove the longer and more complicated direction of Theorem~\ref{fibration2}.

\begin{proposition}\label{otherdirection}
Suppose that $K$ is handle-ribbon in a homotopy 4-ball $B$.  Then there exists a singular fibration $p:S^3_0(K) \rightarrow S^1$ and an extension $\Ph:X \rightarrow S^1$ of $p$ over handlebodies.
\end{proposition}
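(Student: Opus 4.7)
The plan is to construct the extension $\Ph$ directly via a 1-parameter family of singular fibrations of $S^3_0(K)$, following the techniques of~\cite{maggie}. We note that the proposition also follows by combining Theorems~\ref{hr} and~\ref{fibration1}: the projection of any generic-map extension $P:X \to S^1 \X I$ onto the $S^1$-factor is a circular Morse function extending $p$. However, the 1-parameter family construction is more flexible and may yield extensions with smaller-genus regular fibers (cf.~Remark~\ref{thinremark}).

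By Theorem~\ref{hr}, the handle-ribbon hypothesis on $K$ produces an R-link derivative $L \subset F$, where $F$ is a Seifert surface of genus $k = |L|$.  Let $\wh F \subset S^3_0(K)$ be the closed capped surface and let $D$ be a cocore of the 2-handle attached along $K$ in the homotopy 4-ball $B_{K \cup L}$.  Set $X = B_{K \cup L} \setminus \nu(D)$, so $\pd X = S^3_0(K)$ and, as in the proof of Theorem~\ref{fibration1}, $X$ has a handle decomposition with one 0-handle, $(k+1)$ 1-handles, and $k$ 2-handles.  Inside $X$ sits the 3-dimensional handlebody $H$ with $\pd H = \wh F$ whose meridian system is $L$.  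Construct a self-indexing singular fibration $p:S^3_0(K) \to S^1$ with $\wh F$ as its thin fiber, as in the proof of Theorem~\ref{fibration1}.

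To build $\Ph:X \to S^1$, adapt~\cite{maggie} to produce a 1-parameter family $\{p_t\}$ of singular fibrations of $S^3_0(K)$ that starts at $p_0 = p$ and passes through a prescribed sequence of type II events, corresponding to critical points of $\Ph$ crossed as one moves into $X$ from $\pd X$.  The first $k$ events compress $\wh F$ successively along the meridian disks of $H$, each contributing a boundary critical point of index 2 and type II and reducing the fiber genus by one; additional events pair the remaining boundary critical points of $p$ with interior index 2 critical points of $\Ph$ to handle the thick side of the singular fibration.  Capping off the 4-dimensional mapping cylinder of the family reconstructs $X$ together with the circular Morse function $\Ph$.

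The main obstacle is verifying the three conditions in the definition of extending $p$ over handlebodies to a circular Morse function.  That every boundary critical point of $\Ph$ is type II is built into the construction of the family.  The handlebody condition on regular fibers follows because each type II event performs a disk surgery on a surface that initially bounds the handlebody $H$ in $X$, so the property of bounding a handlebody persists throughout the family.  Finally, the count condition (twice as many boundary critical points as interior critical points) is equivalent to $\chi(X) = 0$, which holds since $\chi(X) = 1 - (k+1) + k = 0$, and the bookkeeping is closed by matching the handle count of the reconstructed 4-manifold with the decomposition of $B_{K \cup L} \setminus \nu(D)$ described above.
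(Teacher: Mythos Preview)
Your opening observation is correct and essentially suffices: the proposition does follow from Theorems~\ref{hr} and~\ref{fibration1} by projecting the generic map $P:X\to S^1\times I$ onto the $S^1$-factor, and the paper explicitly acknowledges this route. If you stopped there, the proof would be complete.

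The subsequent ``direct'' construction, however, has genuine gaps. First, you describe $\{p_t\}$ as a family of singular fibrations \emph{of $S^3_0(K)$}, but this cannot be right: as one moves into $X$ from $\partial X$ across the 2-handles attached along $L$, the ambient 3-manifold changes (to $\#^{k+1}(S^1\times S^2)$, and eventually to $S^3$). The paper's family is $p_t:Y_t\to S^1$ with $Y_t=h^{-1}(t)$ varying, organized by a Morse function $h:X\to[1,4]$; your ``mapping cylinder of the family'' is not well-defined as stated. Second, you say the $k$ compressions along meridian disks of $H$ contribute \emph{boundary} critical points of $\Ph$, but the boundary critical points of $\Ph$ are exactly the critical points of $p=\Ph|_{\partial X}$, already fixed once $p$ is chosen; compressions along disks in $H\subset X$ are interior events. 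Third, and most seriously, you wave away the extension across the 3-handles and the 4-handle with the phrase ``additional events pair the remaining boundary critical points of $p$ with interior index~2 critical points.'' In the paper this is the bulk of the work: one must standardize $p_t$ near the 3-handle attaching spheres, extend across them, cancel extraneous index~0/3 critical points, invoke Waldhausen's theorem to standardize the resulting Heegaard splitting of $S^3$, and only then cancel the remaining index~1/2 pairs (each such cancellation producing one interior index~2 critical point of $\Ph$). None of this is addressed in your sketch, and without it neither the handlebody condition on regular fibers nor the count $2n$ boundary vs.\ $n$ interior critical points is verified.
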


\begin{proof}
Suppose that $K$ is handle-ribbon in a homotopy 4-ball $B$.  As in Lemma~\ref{rhandle}, we have that $X=B\setminus D$ admits a relative handle decomposition with $c$ 2-handles, $c+1$ 3-handles, and a single 4-handle (for some $c$). This handle decomposition induces a self-indexing Morse function $h$ on $X$, where $h:X \rightarrow [1,4]$, $\pd X = h^{-1}(1)$, $h$ has $c$ index two critical points at height 2 corresponding to the 2-handles, $c+1$ index three critical points at height 3 corresponding to the 3-handles, and one index four critical point at height 4 corresponding to the 4-handle.  For notational convenience, we let $Y_t = h^{-1}(t)$, noting that $Y_t$ is a smooth 3-manifold for $t \neq 2,3,4$ and $Y_1 = \pd X$.  In this case, $Y_t \cong \pd X$ for $t \in [1,2)$, $Y_t \cong \#^{c+1}(S^1 \X S^2)$ for $t\in (2,3)$, and $Y_t \cong S^3$ for $t\in (3,4)$.

We will construct a family of singular fibrations $p_t:Y_t \rightarrow S_1$, with $t \in [1,4]$, so that $\Ph:X \rightarrow S^1$ given by $\Ph(x) = p_{h(x)}(x)$ extends $p_1$ over handlebodies as in the definition above.  While we assume the singular fibration $p_1$ of $\pd X$ has connected fibers, we relax the restriction that a singular fibration must have connected fibers for other $p_t$ in this family.

\textbf{Step 1: Constructing the initial singular fibration $p_1$.}  View the attaching circles of the 2-handles as a framed $c$-component link $L$ in $S^3_0(K) = \partial X$.  As in the proof of Theorem~\ref{hr}, there exists a Seifert surface $F$ for $K$ such that the capped off surface $\wh F \subset S^3_0(K)$ contains $L$, with the surface framing agreeing with the framing of $L$.  Since $[\wh F]$ is a generator of $H_2(S^3_0(K)) = \Z$, there exists singular fibration $p_1:S^3_0(K) \rightarrow S^1$ such that $\wh F$ is a regular level.  Suppose $p_1$ has $2n$ singularities (of which $n$ are index one and $n$ are index two).

\textbf{Step 2: Extending $p_1$ over the 2-handles of $X$.}  To this end, for some small $\eps > 0$, we let $p_t$ have the same level sets as $p_1$ for all $t \in [1,2-\eps]$. For $t\in (1,2-\eps]$, we reparametrize $p_t$ near the singularities of $p_1$ so that each singularity of $p_t$ is type II (as described above).  As $t$ increases from $2-\eps$ to $2+\eps$, we introduce $c$ singularities of type II into $p_t$, one corresponding to each 2-handle in $X$, as illustrated in Figure~\ref{compress}, which depicts a $p_t$ in a neighborhood of each index two critical point of $h$, where $t \in (2-\eps,2+\eps)$.  Recall that a neighborhood of a type II cone point evolves from an annulus to two disks as $t$ increases, and in this case, a core of the annulus is a component of the attaching link $L$, whose framing agrees with the surface framing.

\begin{figure}[h!]
	\centering
	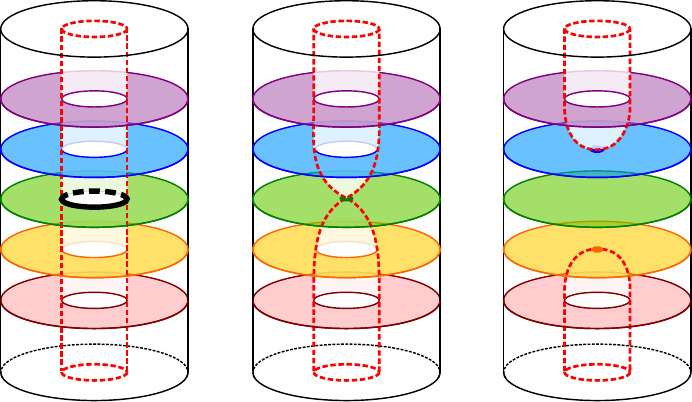
\caption{A neighborhood of an index two critical point of $h$. Each frame shows a neighborhood in the 3-manifold $Y_t$, which is doubled along the interior red boundary, and with colored surfaces representing fibers of $p_t$.  The height $t$ increases from left to right.}
\label{compress}
\end{figure}

The end result of this step, the singular fibration $p_{2+\eps}$ of $Y_{2+\eps}$, has $2n+2c$ singularities, of which $n+c$ are index one and $n+c$ are index two. All of these singularities are type II.  We extend further by letting $p_t$ have the same level sets as $p_{2 + 2\eps}$ for $t \in [2+\eps,3-4\eps]$, reparametrized so that all singularities are always type II.

\textbf{Step 3: Standardizing $p_t$ near the attaching spheres of 3-handles of $X$.}  
 (This step is essentially~\cite[Movie 20]{maggie}.) View the attaching spheres of the 3-handles as $c+1$ disjoint 2-spheres $S_1,\ldots, S_{c+1}$ in $Y_{3-4\eps}$. These spheres intersect the fibers of $p_{3-4\eps}$, inducing singular fibrations of each $S_i$ (after a small perturbation if necessary). For each $i$, if there are any saddle tangencies of $S_i$ with fibers of $p_{3-4\eps}$, then we may add pairs of canceling index zero and one or index two and three critical points to $p_t$ between $t = 3-4\eps$ and $t = 3-3\eps$, where we parametrize all cone singularities to be type II.  After this process is complete, we reposition the critical points of $p_t$ between $t=3-3\eps$ and $t=3-2\eps$ so that there are exactly two points in each $S_i$ that are tangent to fibers of $p_{3-2\eps}$. See Figure~\ref{simplify3handle}.


\begin{figure}[h!]
	\centering
	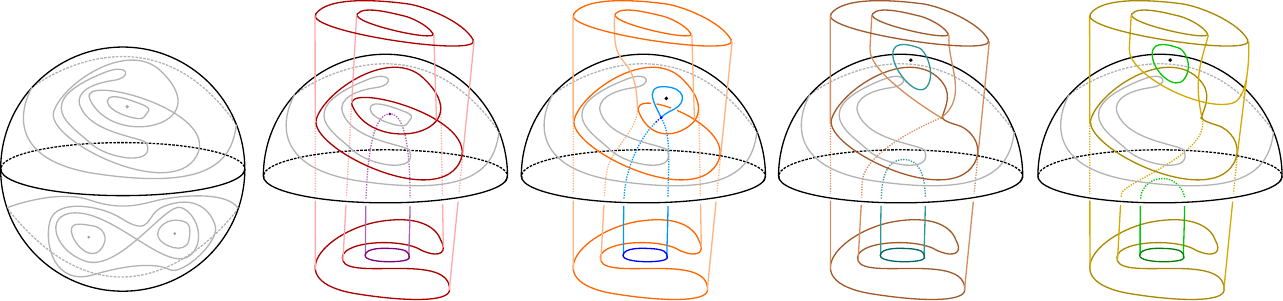
\caption{A movie of $p_t$ for $t \in [3-4\eps,3-2\eps]$, with four depictions of different $Y_t$, $t$ increasing. We draw $p_t$ near one hemisphere of $S_i$. } 
\label{simplify3handle}
\end{figure}

Between $t = 3-2\eps$ and $t = 3-\eps$, and for each attaching sphere $S_i$, we add an additional canceling pair of index zero and 1 critical points and a canceling pair of index two and 3 pairs of critical points to $p_t$ (taking cones to be type II), so that each $S_i$ contains one index one critical point and one index two critical point of $p_{3-\eps}$, and the other intersections of fibers of $p_{3-\eps}$ with $S_i$ are closed curves. 
See Figure~\ref{3handlecones}.
\begin{figure}[h!]
	\centering
	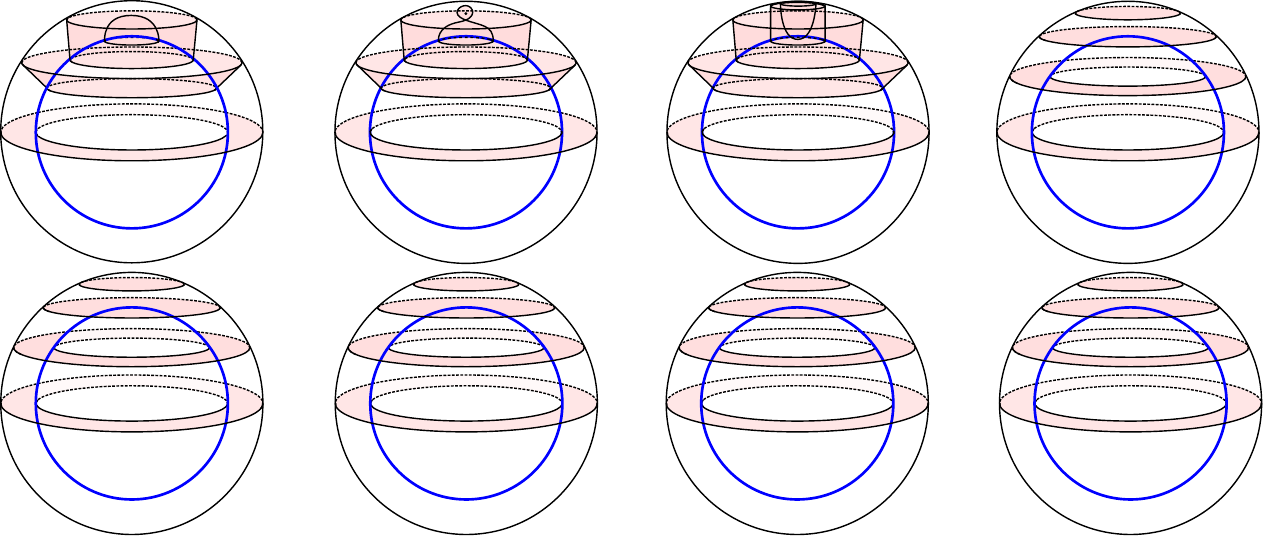
\caption{A movie of $p_t$ for $t \in [3-2\eps,3-\eps]$, with four depictions of $Y_t$, $t$ increasing. The two images in each column are glued along the blue sphere, which represents an attaching sphere $S_i$ of a 3-handle.} 
\label{3handlecones}
\end{figure}

\textbf{Step 4:  Extending $p_t$ across the 3-handles of $X$.}  
In step 3, we imposed a local model of $p_{3-\eps}$ in a neighborhood of the descending sphere of each $3$-handle of $X$. We can now extend $p_t$ over all of the $3$-handles, $t\in [3-\eps,3+\eps]$, by compressing each fiber intersecting the 3-handle attaching circle, as in Figure~\ref{3handle}. (This is~\cite[Movie 7: death movie 1]{maggie}.)  Note that extension across each 3-handle eliminates two critical points from $p_t$.  Thus, if $m$ is the total number of additional index zero and one or index two and three pairs added in step 3, then $m \geq 2(c+1)$ by construction, and $p_{3+\eps}$ has $2n + 2c - 2(c+1) + 2m = 2n - 2 + 2m$ critical points, of which $m$ are index zero or three, and the remaining $2n-2+m$ are index one or two.

\begin{figure}[h!]
	\centering
	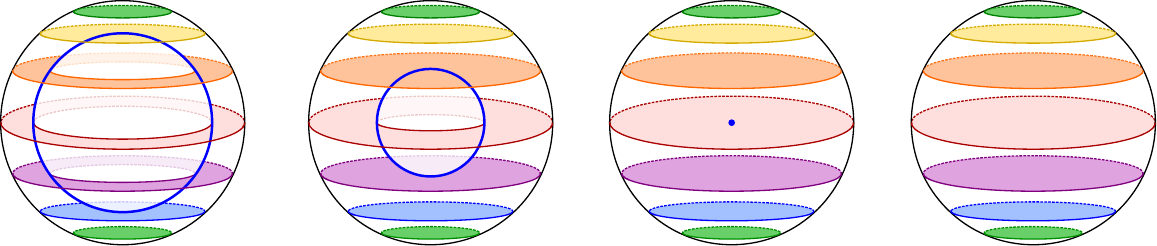
\caption{A movie of $p_t$ for $t \in [3-\eps,3+\eps]$, with four depictions of $Y_t$, $t$ increasing.  Each picture is doubled along the blue interior boundary, which represents the attaching sphere of a 3-handle.} 
\label{3handle}
\end{figure}

\textbf{Step 5: Canceling critical points of $p_t$.} Note $Y_{3+\eps}\cong S^3$. 
At this point, the singular fibration $p_{3+\eps}$ has critical points of each index.  Suppose that $p_{3+\eps}$ has more than one index zero critical point.  (We will eliminate extra index zero critical points using essentially the proof of~\cite[Lemma 4.2]{maggie}.) Without loss of generality, say $F_0$ is a fiber of $p_{3+\eps}$ containing all index zero critical points of $p_{3+\eps}$. Fix a gradient-like vector field $\nabla$ for $p_{3+\eps}$. Then there is an arc $\gamma$ between two index zero points $q_1,q_2\in F_0$ so that $\gamma$ is parallel to $\nabla$ and intersects exactly one critical point $r$ (which must be index one) of $p_{3+\eps}$ in its interior. The map $p_{3+\eps}:\gamma\to S^1=[0,2\pi]/\sim$ gives a natural distance function $d_{\gamma}$ on $\gamma$. Suppose $d_\gamma(q_1,r)\le d_\gamma(r,q_2)$.  Then we may extend $p_t$ to $t\in[3+\eps,3+2\eps]$ while canceling $q_1$ and $r$ along $\gamma$. (We use~\cite[Movie 11]{maggie} to move the cancelled critical points to be close together and~\cite[Movie 1]{maggie} to remove the two critical points.) 

Repeat this step and its dual for index three critical points as necessary for $t\in[3+2\eps,3+3\eps]$ until $p_{3+3\eps}$ has exactly one index zero and one index three critical point.  In the process, we have performed $m-2$ cancellations, so that $p_{3+3\eps}$ has $n$ index one and $n$ index two critical points remaining.



Since $Y_{3+3\eps}$ is $S^3$ and $S^3$ is simply-connected, we can lift $p_{3+3\eps}$ to obtain a Morse function $\hat p_{3+3\eps}:Y_{3+3\eps} \rightarrow \R$.  In this case, the image of $\hat p_{3+3\eps}(Y_{3+3\eps})$ is the interval $[a,b]$, with the index zero critical point occurring at height $a$ and the index three critical point occurring at height $b$.  By composing the straight-line homotopy from $[a,b]$ to $[0,\pi]$ and the covering map $\R \rightarrow S^1$ between $t = 3+3\eps$ and $3 + 4\eps$, we may assume that the image of $p_{3+4\eps}$ is $[0,\pi]$, so that $p_{3+4\eps}$ is an interval-valued Morse function.


Now extend $p_t$ across $t\in[3+4\eps,3+5\eps]$ while exchanging heights of critical points (via e.g.~\cite[Movie 11]{maggie}) so that $p_{3+5\eps}$ is self-indexing. Let $\Sigma$ be a thick surface for $p_{3+5\eps}$.  By choosing $\alpha$ and $\beta$ curves on $\Sigma$ that bound disks intersecting index one or index two critical points (respectively), we obtain a Heegaard diagram for $S^3$. See Figure~\ref{handleslide}. Extend $p_t$ across $t\in[3+5\eps,3+6\eps]$ while exchanging the heights of critical points to achieve handleslides of this Heegaard diagram, as in Figure~\ref{handleslide}. By Theorem~\ref{wald}, we may then take the Heegaard splitting of $S^3$ induced by $p_{3+6\eps}$ to be standard.

\begin{figure}[h!]
	\centering
	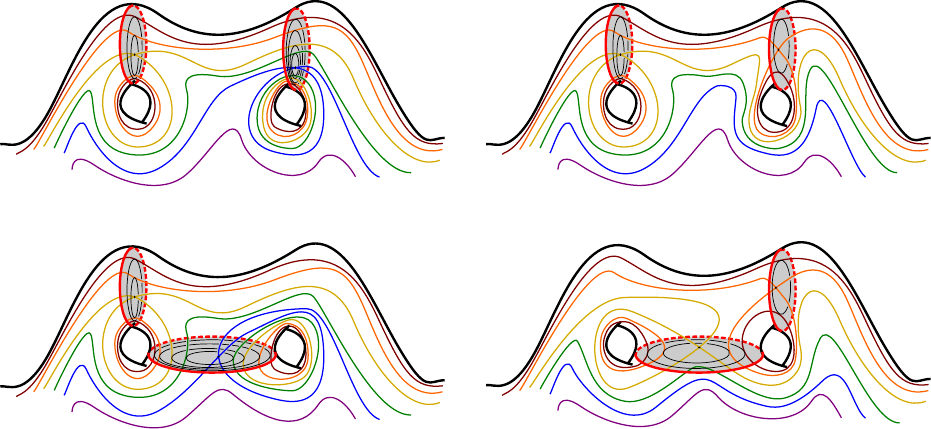
\caption{{\bf{Top:}} The surface $\Sigma$ and some index one critical points of $p_{3+5\eps}$ on one side of $\Sigma$. We draw $\alpha$ curves on $\Sigma$, which each bound a disk intersecting one index one critical point. As $t$ increases, we may exchange the heights of the index one critical points to achieve handle slides of the $\alpha$ curves.}
\label{handleslide}
\end{figure}

Now extend $p_t$ across $t\in[3+6\eps,3+7\eps]$. For each geometrically dual $\alpha$ and $\beta$ curve, during this time interval we cancel the corresponding pair of index one and index two critical points.  By construction, the only critical points of $\Ph$ occur when a critical point of some $p_t$ changes type, or when a pair of index one and index two critical points of the same type are canceled.  In the latter case, each cancellation introduces an index two critical point into the extension $\Ph$, and thus we conclude $\Ph$ has precisely $n$ of these.  See Figure~\ref{cancelcones}. 

\begin{figure}[h!]
	\centering
	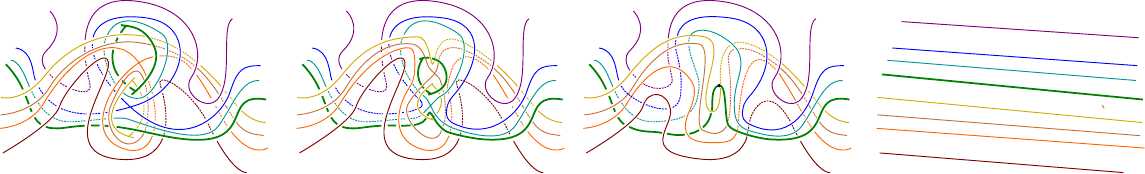
\caption{Left to right: we cancel an index one and an index two critical point of $p_t$ as $t$ increases from $3+6\eps$ to $3+7\eps$. The cancelled critical points are opposite index but both type II, so this introduces a singularity into the extension $\Ph:X\to S^1$. In this picture, the green fiber of $\Ph$ is singular.}
\label{cancelcones}
\end{figure}

Finally, $p_{3+7\eps}$ has an index zero critical point, an index three critical point, and no index one or 2 critical points. 
We then extend $p_t$ across the 4-handle of $X$ by attaching a 3-ball to each sphere fiber of $p_{3+6\eps}$. Thus, we obtain the extension $\Ph:X\to S^1$.

By construction, $\Ph$ has exactly $n$ interior critical points of index two and $2n$ type II boundary critical points.  Moreover, for all $t$, the index one and index two critical points of $p_t$ are type II (compressing), and thus the regular fibers of $\Ph$ are obtained from the regular fibers of $p$ by attaching $2$- and $3$-handles. That is, the regular fibers of $\Ph$ are handlebodies, as desired.
\end{proof}

Theorem~\ref{fibration2} follows immediately from a combination of the statements in Lemma~\ref{onedirection} and Proposition~\ref{otherdirection}.

\begin{remark}\label{thinremark}
Note that in the proof of Proposition~\ref{otherdirection}, if each 2-handle attaching circle of $X$ lies in some fiber of $p$ (with surface framing equal to its framing), then we may skip step 1. (In fact, it is not difficult to use \cite[Movie 19]{maggie} to amend step 2 to apply if the attaching circles of $X$ are each tangent to fibers of $p$ exactly twice and transverse otherwise.) Measuring the complexity of a singular fibration by the genera of its regular surfaces, the relaxed definition may give rise to simpler singular fibrations admitting extensions as compared to those admitting extensions to a generic map $P:X \rightarrow S^1 \X I$ as in Section~\ref{sec:extend}.  In particular, as noted above, the singular fibration of $\pd X$ in Proposition~\ref{otherdirection} need not be self-indexing for the construction to work.  On the other hand, given extensions $P:X \rightarrow S^1 \X I$ and $\Ph:X \rightarrow S^1$ for the same 4-manifold $X$, the extension $P$ induces a handle decomposition via Lemma~\ref{reverse_direction} with fewer handles than the one produced by $\Ph$ using Lemma~\ref{onedirection}.
\end{remark}

\begin{remark}\label{noextend}
One might wonder whether it could possible to strengthen Theorem~\ref{extensions} to show that if $K \subset S^3$ is handle-ribbon in a homotopy 4-ball, then \emph{every} singular fibration of $S^3_0(K)$ extends over handlebodies.  Unfortunately, this is not the case.  Using either definition of extension over handlebodies, if every singular fibration of $S^3_0(K)$ extends over handlebodies, then every Seifert surface $F$ for $K$ has a handle-ribbon derivative:  Given $F$, we can construct a singular fibration $p$ with $\wh F$ as a regular fiber.  If $p$ extends over handlebodies, then $\wh F$ bounds a handlebody $H$ in a 4-manifold $X$, where the union of the relative trace $B_0(K)$ and $X$ is a homotopy 4-ball $B$.  Any link $L \subset \wh F$ bounding disks cutting $H$ into a 3-ball can be isotoped into $F \subset S^3$, and by Lemma~\ref{reverse_direction} or Lemma~\ref{onedirection}, we have that $L$ is handle-ribbon in $B$.  However, as mentioned in the introduction, Cochran-Davis showed that there exists a ribbon knot $K$ with a genus one Seifert surface $F$ such that $F$ contains no slice derivative~\cite{CD}; thus, no singular fibration containing $\wh F$ as a fiber can extend over handlebodies (in either sense).
\end{remark}

\section{Natural trisections of singular handlebody extensions}\label{sec:trisections}

In this section, we study decompositions of the homotopy 4-spheres appearing in the context of extending a singular fibration to a generic map $P:X \rightarrow S^1 \X I$.  Recall that an R-link $L$ can be viewed as a Kirby diagram for a homotopy 4-sphere $X_L$, or a relative Kirby diagram for a homotopy 4-ball $B_L$.  By Theorem~\ref{hr}, every knot $K$ that is handle-ribbon in a homotopy 4-ball $B$ has an R-link derivative $L$ such that $B = B_L$.  In addition, by Theorem~\ref{fibration1}, there exists a self-indexing singular fibration $p:S^3_0(K) \rightarrow S^1$ extending to $P:X \rightarrow S^1 \X I$, where $B_L = B_P(K)$.  We let $X_P(K)$ denote the homotopy 4-sphere obtained by capping off $B_P(K)$ with a 4-ball, so that in this case we have $X_L = X_P(K)$.  The aim of this section is to show that the homotopy 4-sphere $X_L = X_P(K)$ admits a natural 4-manifold trisection.  One motivation for this investigation is to better understand the stable generalized Property R conjecture (the stable GPRC), which asserts that every R-link is stably equivalent to an unlink.  In~\cite{MZDehn}, Meier and the second author formulated an equivalent characterization of the stable GPRC via trisections.

Trisections of smooth 4-manifolds were introduced by Gay and Kirby in~\cite{GK}, in which they showed that every 4-manifold $X$ admits a trisection $\mathcal{T}$, a decomposition $X = X_1 \cup X_2 \cup X_3$, where $X_i$ is a 4-dimensional 1-handlebody and each intersection $X_i \cap X_j$ is a 3-dimensional handlebody.  These criteria imply that the triple intersection $X_1 \cap X_2 \cap X_3$ is a closed surface $\Sigma$, which we call the \emph{central surface}.  The complexity of the trisection is encoded in the parameters $g = g(\Sigma)$ and $k_i = \text{rk}(\pi_1(X_i))$; we call $\mathcal{T}$ a $(g;k_1,k_2,k_3)$-trisection.  If $k_1 = k_2 = k_3$, then $\mathcal{T}$ is said to be \emph{balanced}; otherwise, it is \emph{unbalanced}.  All trisections in this paper will be unbalanced.

The results here extend work of Meier and the second author, which we will obtain as a special case.

\begin{proposition}\cite[Proposition 9.1]{FHRkS}\label{fib}
Suppose $K$ is a fibered homotopy-ribbon knot in $S^3$ with genus $g$ fiber and fibration $p:S^3_0(K) \rightarrow S^1$.  Then for any handlebody extension $P:X \rightarrow S^1 \X I$ of $p$, the corresponding homotopy 4-sphere $X_P(K)$ admits a $(2g;0,g,g)$-trisection.
\end{proposition}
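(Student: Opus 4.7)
The plan is to explicitly construct the $(2g;0,g,g)$-trisection by exploiting the fibered structure of both $X$ and $X_0(K)$. By Casson-Gordon's theorem (Theorem~\ref{cassongordon}), the assumption that $K$ is fibered and homotopy-ribbon gives $X = H \X_\Phi S^1$, where $H$ is a genus $g$ handlebody with $\partial H = \wh F$ and $\Phi$ extends the closed monodromy $\wh\varphi$ to $H$. Thus $X_P(K) = X \cup_{S^3_0(K)} X_0(K)$, where $X_0(K) = B^4 \cup h^2_K$ is the $0$-trace of $K$.

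First I would split $S^1$ into three arcs $I_1, I_2, I_3$ meeting at three points $\theta_1, \theta_2, \theta_3$, and choose the dual knot $K^* \subset S^3_0(K)$ (which is a section of $p$) to be transverse to each fiber. Setting $X_2 = H \X I_2$ and $X_3 = H \X I_3$, each is a trivialized handlebody bundle over an arc and hence diffeomorphic to $\natural^g(S^1 \X B^3)$, a genus $g$ 4-dimensional 1-handlebody, as required. The third piece $X_1 = X_P(K) \setminus (X_2 \cup X_3)^\circ$ consists of the slab $H \X I_1$ together with $X_0(K)$. The central genus $2g$ surface $\Sigma$ is obtained by tubing the fibers $\wh F_{\theta_2}$ and $\wh F_{\theta_3}$ along an annular neighborhood of $K^*|_{I_1}$ inside $\partial X$, and the three pairwise intersections $X_i \cap X_j$ are the 3-dimensional genus $2g$ handlebodies bounded by $\Sigma$: the handlebody $X_2 \cap X_3$ comes from the two fibers $H \X \{\theta_1\}, H \X \{\theta_3\}$ glued along a product region of $\wh F \X I_1$, while $X_1 \cap X_2$ and $X_1 \cap X_3$ arise from the fibers tubed with the cocore of $h^2_K$.

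The main obstacle is verifying that $X_1$ is a 4-ball rather than a higher-genus 1-handlebody. The key point is that viewing $X_0(K) = B^4 \cup h^2_K$ dually from the $S^3_0(K)$ side (as a 2-handle attached to $K^*$ followed by a 4-handle), the 2-handle on $K^*$ interacts with the 1-handles of the slab $H \X I_1$ in precisely the right way: the single section $K^*$ together with the handlebody extension $\Phi$ allows the 2-handle to cancel the $g$ 1-handles of $H \X I_1$ (modulo the 4-handle), leaving $X_1 \cong B^4$. One can check this cancellation concretely by choosing a compressing disk system for $H$ with boundaries disjoint from $K^* \cap \wh F$ and tracing through the induced handle slides, appealing to Waldhausen's theorem (Theorem~\ref{wald}) to standardize the resulting genus $2g$ Heegaard splitting of $\partial X_1 = S^3$. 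Verifying that each $X_i \cap X_j$ is a genus $2g$ handlebody bounded by $\Sigma$ is a parallel (and easier) check using the product structure of the handlebody bundle over arcs.
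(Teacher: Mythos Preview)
Your decomposition does not give a trisection. The Euler characteristic already rules out $X_1 \cong B^4$: with your definitions,
\[
\chi(X_1) = \chi(H \times I_1) + \chi(X_0(K)) - \chi(\wh F \times I_1) = (1-g) + 2 - (2-2g) = 1+g,
\]
so $X_1$ cannot be a $4$-ball unless $g=0$. Your cancellation argument fails for exactly the reason this count suggests: the dual $2$-handle along $K^*$ is a \emph{single} $2$-handle, and it cannot cancel $g$ $1$-handles of the slab $H \times I_1$. Relatedly, the pairwise intersections are not what you claim. With $X_2 = H \times I_2$ and $X_3 = H \times I_3$, the intersection $X_2 \cap X_3$ is literally $H \times (I_2 \cap I_3) = H \times \{\text{pt}\}$, a genus~$g$ handlebody; your description of it as two $H$-fibers joined along $\wh F \times I_1$ refers to a region that lies in $X_1$, not in $X_2 \cap X_3$. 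Consequently the triple intersection is $\wh F$ (genus~$g$), not the genus~$2g$ surface you describe.

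The fix, and the paper's viewpoint, is to use only \emph{two} arcs on $S^1$ and take $X_1$ to be the $4$-ball capping off $B_P(K)$, so that $X_2 \cup X_3 = B_P(K)$ is split by the open book $P_1'$ on $B_P(K) - D$ (see Remark~\ref{trisectionfamilyremark}). Then $X_2 \cap X_3 = H_0 \cup_D H_\pi$ really is a genus~$2g$ handlebody, and $X_1 \cap X_j = (p')^{-1}(I_j) \cong F \times I$, which has genus $2g$ because $F$ is a genus~$g$ surface with one boundary component. The paper itself does not carry this out directly here: it obtains Proposition~\ref{fib} as the special case $k = \ell = g$ of Proposition~\ref{trisect}, whose proof extracts the R-link derivative $L$ from Theorem~\ref{fibration1}, exhibits $F \cup G$ (two fiber pages meeting along $K$) as an admissible genus~$2g$ Heegaard surface for $L$, and then invokes Lemma~\ref{admissible} as a black box.
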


For the proof, we invoke the framework from~\cite{MZDehn}.  Suppose $L \subset S^3$ is an R-link.  We say that a Heegaard surface $\Sigma$ for $S^3$ is \emph{admissible} with respect to $L$ if $L$ is isotopic into a core of one of the handlebodies $H$ cut out by $\Sigma$, so that $H \setminus L$ is a compression body.  Equivalently, $\Sigma$ is admissible if the $n$-component link $L = \{L_i\}$ is isotopic into $\Sigma$ and there exist a collection of $n$ compressing disks $\{D_i\}$ such that $|L_i \cap D_j| = \delta_{ij}$.  We will use

\begin{lemma}\cite[Lemma 4]{MZDehn}\label{admissible}
Suppose $L \subset S^3$ is an $n$-component R-link with admissible genus $g$ Heegaard surface $\Sigma$.  Then $X_L$ admits a $(g;0,n,g-n)$-trisection.
\end{lemma}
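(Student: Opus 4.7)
The plan is to construct the trisection directly from the admissible Heegaard surface and the handle decomposition $X_L=B^4\cup L\text{-}2\text{-handles}\cup n\text{ }3\text{-handles}\cup B^4$. By admissibility, after isotopy $L\subset\Sigma$ and there are pairwise disjoint compressing disks $D_1,\ldots,D_n\subset H_1$ satisfying $|L_i\cap D_j|=\delta_{ij}$; equivalently, $L$ is isotopic to $n$ of the $g$ core curves of $H_1$. I would declare
\[
X_1:=B^4,\qquad X_3:=\nu_{X_L}(H_1\cup C_1\cup\cdots\cup C_n),\qquad X_2:=\overline{X_L\setminus(X_1\cup X_3)},
\]
where $X_1$ is the 0-handle of $X_L$, each $C_i$ is the core of the $i$-th 2-handle of $X_L$ (a properly embedded 2-disk in $X_L\setminus X_1$ with $\partial C_i=L_i$), and $\nu_{X_L}(\cdot)$ denotes a regular neighborhood in $X_L$. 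The central surface will be $\Sigma\subset\partial X_1$.

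The trisection parameters can then be read off. Immediately, $k_1=0$ and $\partial X_1=H_1\cup_\Sigma H_2$. The subcomplex $H_1\cup C_1\cup\cdots\cup C_n$ is homotopy equivalent to $\bigvee^{g-n}S^1$, since gluing the $n$ two-disks $C_i$ to the handlebody $H_1$ along $n$ of its $g$ core circles kills those circles in $\pi_1$; its regular neighborhood in the 4-manifold $X_L$ is therefore $\natural^{g-n}(S^1\X B^3)$, so $X_3\cong\natural^{g-n}(S^1\X B^3)$ and $k_3=g-n$. To handle $X_2$, observe that $X_1\cup X_3$ is diffeomorphic to the Kirby trace $W_1:=B^4\cup L\text{-}2\text{-handles}$, since the neighborhood of $H_1$ absorbs into $B^4$ and the disk neighborhoods of the cores $C_i$ are exactly the $L$-2-handles attached with their correct ($0$-)framings. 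Thus $X_2=\overline{X_L\setminus W_1}$, which from the dual handle decomposition (turn $X_L$ upside down) consists of a 0-handle (former 4-handle) and $n$ 1-handles (former 3-handles), giving $X_2\cong\natural^n(S^1\X B^3)$ and $k_2=n$.

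For the trisection axioms, the pairwise intersections $X_1\cap X_3=H_1$ and $X_1\cap X_2=H_2$ are the two handlebodies of the Heegaard splitting of $\partial X_1=S^3$. The third handlebody $H_{23}:=X_2\cap X_3$ is a genus-$g$ handlebody bounded by $\Sigma$, with cut system $\{L_1,\ldots,L_n,\partial D_{n+1},\ldots,\partial D_g\}$ on $\Sigma$ (after extending $\{D_i\}_{i=1}^n$ to a full cut system of $H_1$ chosen so that $L$ is disjoint from $D_{n+1},\ldots,D_g$). A direct homological computation using the intersection form on $\Sigma$, combined with Waldhausen's theorem~\ref{wald} applied to the resulting genus-$g$ Heegaard splittings, confirms $H_1\cup_\Sigma H_{23}=\#^{g-n}(S^1\X S^2)=\partial X_3$ and $H_2\cup_\Sigma H_{23}=\#^n(S^1\X S^2)=\partial X_2$, matching the claimed parameters. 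The main subtlety lies in the identification $X_1\cup X_3\cong W_1$, which requires verifying that the regular-neighborhood construction actually reproduces the standard 2-handles with their $0$-framing; once this identification is secured, the remaining verifications reduce to standard Heegaard-theoretic bookkeeping.
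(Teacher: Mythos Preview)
The paper does not prove this lemma; it is quoted directly from~\cite{MZDehn}. Your construction is the standard one and is essentially correct: taking $X_1$ to be the $0$-handle, $X_3$ a regular neighborhood of $H_1$ together with the $2$-handle cores, and $X_2$ the complement does recover a $(g;0,n,g-n)$-trisection, with the identifications $X_2\cong\natural^n(S^1\X B^3)$ and $X_3\cong\natural^{g-n}(S^1\X B^3)$ obtained exactly as you describe.

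There is one inaccuracy, though it is not fatal. Your asserted cut system $\{L_1,\dots,L_n,\partial D_{n+1},\dots,\partial D_g\}$ for $H_{23}$ is correct only when the surface framing of each $L_i$ in $\Sigma$ agrees with the $0$-framing, and admissibility alone does not guarantee this. (For instance, the unknot sits admissibly on the genus-one Heegaard torus as a $(1,k)$-curve for any $k$, with surface framing $k$.) In general the disk contributed to $H_{23}$ by the $i$-th surgery solid torus is bounded by the $0$-framed pushoff of $L_i$, which need not be isotopic to $L_i$ in $\Sigma$. This does not break the argument: $H_{23}$ is still a genus-$g$ handlebody, since Dehn surgery on a collection of core curves in a handlebody returns a handlebody regardless of the surgery slopes. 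Moreover, the identifications $H_1\cup_\Sigma H_{23}=\partial X_3$ and $H_2\cup_\Sigma H_{23}=\partial X_2$ hold tautologically, and you have already computed $\partial X_2$ and $\partial X_3$ from the handle structure; the homological computation on $\Sigma$ and the appeal to Waldhausen are therefore unnecessary (and, based on the incorrect cut system, would be unreliable).
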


For the next proof, it is important to note for any knot $K$ and singular fibration $p:S^3_0(K) \rightarrow S^1$, there is a singular open book decomposition $p':S^3 - K \rightarrow S^1$ induced by removing the dual $K^*$ from $S^3_0(K)$.  In this case, every nonsingular fiber of $p$ corresponds a page in the singular open book $p'$; that is, a Seifert surface for $K$.

\begin{proposition}\label{trisect}
Suppose $K$ is a handle-ribbon knot in $S^3$ with self-indexing singular fibration $p:S^3_0(K)$, whose thin and thick surfaces have genus $k$ and $\ell$, respectively.  For any handlebody extension of $P:X \rightarrow S^1 \X I$ of $p$, the corresponding homotopy 4-sphere $X_P(K)$ admits a $(k+\ell;0,k,\ell)$-trisection.
\end{proposition}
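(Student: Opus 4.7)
The strategy is to apply Lemma~\ref{admissible}, producing the trisection from an admissible Heegaard surface of $S^3$. By Theorem~\ref{fibration1}, the handlebody extension $P$ yields a $k$-component R-link derivative $L$ lying on the genus-$k$ Seifert surface $F$ (obtained from the thin fiber $\wh F = p^{-1}(0)$ by removing the dual $K^*$), and moreover $X_P(K) = X_L$. Let $G$ be the analogous genus-$\ell$ Seifert surface from the thick fiber $\wh G = p^{-1}(\pi)$. The proposed Heegaard surface is $\Sigma := F \cup_K G$, a closed surface of genus $k+\ell$ in $S^3$. Once $\Sigma$ is shown to be a Heegaard surface of $S^3$ admissible with respect to $L$, Lemma~\ref{admissible} furnishes the desired $(k+\ell; 0, k, \ell)$-trisection of $X_L = X_P(K)$.

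To verify that $\Sigma$ is a Heegaard surface, we decompose each side $W_\pm$ of $\Sigma$ in $S^3$ into (i) the cobordism from $F$ to $G$ cut out by one semicircle of $\theta$-values in the induced singular open book $p': S^3 \setminus K \to S^1$, and (ii) a half-solid-torus subset of $\eta(K)$, glued along the annulus $\pd F \X [0, \pi]$. Self-indexing forces the cobordism in (i) to be $F \X I$ with $(\ell-k)$ three-dimensional 1-handles attached, so this cobordism is a handlebody of genus $k+\ell$. Van Kampen applied to the amalgamation with (ii) then yields $\pi_1(W_\pm)$ free of rank $k+\ell$: the $\Z$-generator of the half-solid-torus identifies with the nontrivial boundary loop $\pd F$ in the free group $\pi_1$ of the cobordism, so the new generator is absorbed. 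Combined with the irreducibility of $W_\pm$ inherited from $S^3$, this identifies each $W_\pm$ as a genus-$(k+\ell)$ handlebody.

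For admissibility, we construct compressing disks $D_1, \dots, D_k \subset W_-$ satisfying $|L_i \cap D_j| = \delta_{ij}$. Because $F \setminus L$ is connected and planar (as $L$ is a derivative), we choose pairwise disjoint embedded arcs $\alpha_i \subset F$ with endpoints on $K$, each crossing $L_i$ once and disjoint from $L_j$ for $j \neq i$. After a small isotopy preserving these intersections, the $\alpha_i$ may be taken to avoid the $1$-dimensional traces on $F$ of the ascending manifolds of the $(\ell - k)$ critical points of $p$ on the $W_-$ side. The gradient flow of $p$ then transports each $\alpha_i$ through the cobordism to an arc on $G$, sweeping out an embedded rectangle $A_i \subset W_- \cap (S^3 \setminus \eta(K))$. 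Cap the two vertical edges of $A_i$, which lie on $\pd \eta(K)$, with radial arcs across the half-solid-torus to land on $\Sigma \cap \eta(K)$. The resulting disk $D_i$ is compressing in $W_-$ and meets $L_j$ only along $\alpha_i$, so $|L_i \cap D_j| = \delta_{ij}$.

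The main technical obstacle is checking that each $W_\pm$ is genuinely a handlebody; the van Kampen calculation must handle the binding annulus $K \X [0, \pi]$ with care to avoid introducing relations that destroy freeness of $\pi_1$. The admissibility construction is then largely routine, though positioning the arcs $\alpha_i$ in $F \setminus L$ to miss the ascending-manifold traces while preserving the intersection pattern with $L$ deserves a brief genericity check.
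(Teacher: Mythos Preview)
Your proposal is correct and follows the same strategy as the paper: set $\Sigma = F \cup_K G$, verify it is an admissible Heegaard surface for the R-link derivative $L$, and invoke Lemma~\ref{admissible}. The only real difference is in the verification that each side $W_\pm$ of $\Sigma$ is a handlebody. The paper observes directly that $W_\pm$ is diffeomorphic to $(F \times I)\cup(\text{1-handles})$; since $F$ has nonempty boundary, $F\times I$ is already a handlebody, and attaching 1-handles keeps it one. Your route---splitting off the half--solid-torus near $K$, running a van Kampen computation, and then appealing to irreducibility plus freeness of $\pi_1$---reaches the same conclusion but is more circuitous; the half--solid-torus is just a collar on the annulus $K\times[0,\pi]$ and can be absorbed into $F\times I$ without any $\pi_1$ bookkeeping. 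One small correction in your admissibility step: going from $F$ to $G$ on the $W_-$ side you pass index-one critical points of $p$, and it is their \emph{descending} manifolds that meet $F$, in a finite set of points (the feet of the 1-handles), not ``1-dimensional traces of ascending manifolds.'' The genericity argument you want is exactly the paper's: isotope the arcs $\alpha_i$ off these feet so that the product disks $\alpha_i\times I$ miss the 1-handles.
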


\begin{proof}
Suppose that $P:X \rightarrow S^1 \X I$ extends $p$ over handlebodies, and let $\wh F$ and $\wh G$ denote the thin and thick surfaces of $p$, respectively.  By the proof of Theorem~\ref{fibration1}, there exists an R-link derivative $L \subset \wh F$, where $X_L = X_P(K)$.  Let. $p':S^3 - K \rightarrow S^1$ be the singular open book decomposition induced by $p$, discussed above, and let $F$ and $G$ be the Seifert surfaces corresponding to $\wh F$ and $\wh G$.  We may assume that $F \cap G = K$ and $L \subset F$.

We claim that $\Sigma = F \cup G$ is a Heegaard surface for $S^3$ that is admissible with respect to $L$:  Since $p$ is self-indexing, $\Sigma$ cuts $S^3$ into two components, each of which is diffeomorphic to $(F \X I) \cup (\text{1-handles})$.  Since $F$ has nonempty boundary, $F \X I$ is a handlebody, and thus so is $(F \X I) \cup (\text{1-handles})$, so that $\Sigma$ is a Heegaard surface for $S^3$.  To see that $\Sigma$ is admissible, let $k = g(F)$, so that $L = \{L_i\}$ is a $k$-component link, and choose $k$ pairwise disjoint properly embedded arcs $\{a_i\}$ in $F$ such that $|L_i \cap a_j| = \delta_{ij}$.  Then each arc $a_j$ gives rise to a compressing disk $D_j = a_j \X I \subset F \X I$, and after a small isotopy of $a_j$, this disk can be chosen to be disjoint from the feet of the 1-handles, so that $D_j$ is also a compressing disk for $(F \X I) \cup (\text{1-handles})$.  We conclude that $\Sigma$ is admissible.

Finally, if $\ell = g(G)$, Lemma~\ref{admissible} implies that $X_P(K) = X_L$ admits a $(k+\ell;0,k,\ell)$-trisection as desired.
\end{proof}

Note that in the case that $K$ is a fibered knot, Proposition~\ref{fib} is a special case of Proposition~\ref{trisect}, since $g(G) =  g(F)$ in the fibered case. 

\begin{remark}\label{trisectionfamilyremark}
It is also worth noting that the components of the trisection $\mathcal{T}$ given in Proposition~\ref{trisect} arise naturally using a base diagram $P(Z_P)$.  Suppose that $K$ is handle-ribbon in $S^3$, with singular fibration $p:S^3_0(K) \rightarrow S^1$ extending over handlebodies to $P:X \rightarrow S^1\X I$.  As stated above, the first coordinate function $P_1:X \rightarrow S^1$ is a circular Morse function whose embedded fibers are handlebodies.  Let $D$ be the corresponding handle-ribbon disk for $K$ in $B_P(K)$, so that $X = B_P(K) \setminus D$.  As in the 3-dimensional case, $P_1$ induces a singular open book decomposition $P_1':B_P(K) - D$, where the restriction of $P_1'$ to $S^3 - K$ is the singular open book $p':S^3 - K \rightarrow S^1$, and each regular fiber of $P_1'$ is an embedded handlebody, whose boundary is a fiber of $p'$ capped off with the disk $D$.

Let $X_1$ be the 4-ball attached to $B_P(K)$ to get $X_P(K)$.  We will define $X_2$ and $X_3$ so that the following holds:
\begin{enumerate}
\item $X_1 \cap X_2 = (p')^{-1}([0,\pi])$,
\item $X_1 \cap X_3 = (p')^{-1}([\pi,2\pi])$, and
\item $X_1 \cap X_2 \cap X_3 = (p')^{-1}(\{0\}) \cup (p')^{-1}(\{\pi\})$,
\ee
so that as in the proof of Proposition~\ref{trisect}, the central surface is the connected sum of the thin surface (of genus $k$) and thick surface (of genus $\ell$) of $p$, viewed as having $K$ as a common boundary component.  For the moment, we have left $X_2 \cap X_3$ undefined.  This intersection should be the boundary connected sum of two handlebody fibers, call them $H_0$ and $H_{\pi}$, of the singular open book $P_1':B_P(K) - D \rightarrow S^1$.  It is clear that $H_0$ must be $(P_1')^{-1}(0)$.  Less clear is the handlebody $H_{\pi}$, since all of the interior critical points of $P_1$ occur at the critical value $\pi$; thus, we have a choice to make (in fact, we have $2^{\ell-k}$ such choices, as we will see below).

Note that $X_- = (P_1')^{-1}([0,\pi-\eps])$ and $X_+ = (P_1')^{-1}([\pi+\eps,2\pi])$ are 4-dimensional 1-handlebodies satisfying $\text{rk}(\pi_1(X_{\pm})) = \ell$.  See Figure~\ref{sections}.  Clearly, we want $X_- \subset X_2$ and $X_+ \subset X_3$, but the question is where to put the $\ell - k$ cusps contained in the portion of the base diagram $P(Z_P)$ contained in $[\pi - \eps,\pi+\eps] \X I$.  Let $X_{\pi} = (P_1')^{-1}([\pi-\eps,\pi+\eps])$.  Here, the cusps can be viewed as corresponding to 4-dimensional 2-handles, since $X_\pi$ can be obtained by attaching $\ell - k$ 2-handles to a collar neighborhood of $(P_1')^{-1}(\pi-\eps)$.  In the construction in Proposition~\ref{trisect}, we have $X_2 = X_- \cup X_{\pi}$, and the $\ell - k$ 2-handles cancel $\ell - k$ 1-handles of $X_-$, yielding the 1-handlebody $X_2$ with $\text{rk}(\pi_1(X_2)) = k$, as in the conclusion of the proposition.

However, in a dual construction we could add $X_{\pi}$ to $X_+$, or we could even break $X_{\pi}$ into its constituent 2-handle pieces, attaching some of the 2-handles to $X_-$ to obtain $X_2$ and some to $X_+$ so obtain $X_3$.  This choice is equivalent to choosing an arc $\gamma \subset S^1 \X I$ connecting $\{\pi\} \X \pd I$, transverse to the $I$ coordinate of $S^1 \X I$, and such that $\gamma$ avoids the cusp points of $P(Z_P)$.  Then $\gamma$ partitions the cusp points of $P(Z_P)$ into ``upper" and ``lower" cusp points, so that up to isotopy, where are $2^{\ell - k}$ choices for $\gamma$.  See Figure~\ref{trisectionfamily} for an illustration. For a particular choice, we let $H_{\pi} = P^{-1}(\gamma)$, so that $H_{\pi}$ is an embedded handlebody, and setting $X_2 \cap X_3 = H_0 \cup H_{\pi}$ as above determines a $(k+\ell; 0,k+n,\ell-n)$-trisection $\Tt_{\gamma}$ of $X_P(K)$, where $n$ is the number of lower cusp points determined by the arc $\gamma$, with $0 \leq n \leq \ell-k$.  The trisection produced by Proposition~\ref{trisect} agrees with $\Tt_{\gamma}$ when $n=0$.

\begin{figure}[h!]
	\centering
	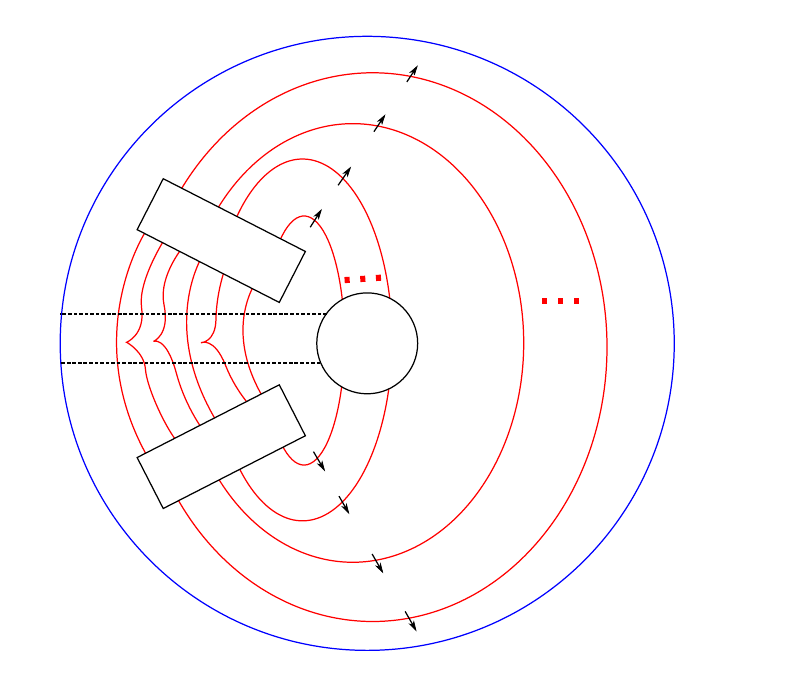
\caption{In Remark~\ref{trisectionfamilyremark}, the choice of the arc $\gamma$ determines the trisection $\Tt_{\gamma}$. The cusps just above $\gamma$ contribute 2-handles to $X_2$ while the cusps just below $\gamma$ contribute 2-handles to $X_3$.}
	\label{trisectionfamily}
\end{figure}

\end{remark}

\begin{question}
What is the relationship between different elements of the family $\{\Tt_{\gamma}\}$ of trisections of $X_P(K)$?  Is it possible that two of these trisections that differ by a single cusp are related by a single (unbalanced) stabilization and destabilization?
\end{question}

For more detailed constructions (including trisection diagrams) in the fibered case and further connections to the stable GPRC, the reader is encouraged to see Section 9 of~\cite{FHRkS}.

\bibliographystyle{amsalpha}
\bibliography{handle}

\end{document}